\newtheorem{prop}{Proposition}[section]
\newtheorem{lemma}[prop]{Lemma}
\newtheorem{thm}[prop]{Theorem}
\newtheorem{cor}[prop]{Corollary}
\theoremstyle{definition}
\newtheorem{defn}[prop]{Definition}
\newtheorem{question}[prop]{Question}
\newtheorem{rmk}[prop]{Remark}
\newtheorem{ex}[prop]{Example}
\newcommand{\Hecke}{\cH ecke}
\DeclareMathOperator{\Chow}{Chow}
\DeclareMathOperator{\rk}{rk}        
 \DeclareMathOperator{\Sym}{Sym}
\DeclareMathOperator{\id}{id}
\newcommand{\ra}{\rightarrow}
\newcommand{\rap}{\stackrel{+}{\ra}}
\newcommand{\Gy}{\mathrm{Gy}}
\DeclareMathOperator*{\hocolim}{hocolim}
\DeclareMathOperator{\Jac}{Jac}\DeclareMathOperator{\Sch}{Sch} 
\DeclareMathOperator{\Hom}{Hom}
\DeclareMathOperator{\Spec}{Spec}
\DeclareMathOperator{\codim}{codim}
\DeclareMathOperator{\eff}{{eff}}
\DeclareMathOperator{\Nis}{Nis}
\DeclareMathOperator{\Aut}{Aut}
\def\cC{\mathcal C}\def\cD{\mathcal D}
\def\cE{\mathcal E}\def\cF{\mathcal F}\def\cG{\mathcal G}\def\cH{\mathcal H}
\def\cI{\mathcal I}\def\cJ{\mathcal J}
\def\cO{\mathcal O}
\def\cR{\mathcal R}\def\cT{\mathcal T}
\def\cU{\mathcal U}
\def\cY{\mathcal Y}
\def\AA{\mathbb A}\def\CC{\mathbb C}
\def\FF{\mathbb F}\def\GG{\mathbb G}
\def\NN{\mathbb N}\def\PP{\mathbb P}
\def\QQ{\mathbb Q}\def\RR{\mathbb R}
\def\VV{\mathbb V}
\def\ZZ{\mathbb Z}
\def\fg{\mathfrak g}
\def\fX{\mathfrak X}
 \def\GL{\mathrm{GL}} 
\def\DM{\mathrm{DM}}     
\def\CH{\mathrm{CH}}
\def\@tocline#1#2#3#4#5#6#7{\relax
  \ifnum #1>\c@tocdepth 
  \else
    \par \addpenalty\@secpenalty\addvspace{#2}%
    \begingroup \hyphenpenalty\@M
    \@ifempty{#4}{%
      \@tempdima\csname r@tocindent\number#1\endcsname\relax
    }{%
      \@tempdima#4\relax
    }%
    \parindent\z@ \leftskip#3\relax \advance\leftskip\@tempdima\relax
    \rightskip\@pnumwidth plus4em \parfillskip-\@pnumwidth
    #5\leavevmode\hskip-\@tempdima
      \ifcase #1
       \or\or \hskip 1em \or \hskip 2em \else \hskip 3em \fi%
      #6\nobreak\relax
    \hfill\hbox to\@pnumwidth{\@tocpagenum{#7}}\par
    \nobreak
    \endgroup
  \fi}
\newsavebox{\@brx}
\newcommand{\llangle}[1][]{\savebox{\@brx}{\(\m@th{#1\langle}\)}%
  \mathopen{\copy\@brx\kern-0.5\wd\@brx\usebox{\@brx}}}
\newcommand{\rrangle}[1][]{\savebox{\@brx}{\(\m@th{#1\rangle}\)}%
  \mathclose{\copy\@brx\kern-0.5\wd\@brx\usebox{\@brx}}}
\newcommand{\Bun}{\mathcal{B}un_{n,d}}
\newcommand{\Ch}{\mathcal{C}h_{\underline{n},\underline{d}}}
\newcommand{\Chgs}{\Ch^{\mathrm{gen-surj}}}
\newcommand{\Chtau}{\Ch^{\alpha,\tau}}
\newcommand{\Chss}{\Ch^{\alpha,ss}}
\newcommand{\Chs}{\Ch^{\alpha,s}}
\DeclareMathOperator{\ch}{Ch}
\DeclareMathOperator{\h}{H}
\DeclareMathOperator{\m}{M}
\newcommand{\mCh}{\ch^{\alpha,ss}_{\underline{n},\underline{d}}}
\newcommand{\mChs}{\ch^{\alpha,s}_{\underline{n},\underline{d}}}
\newcommand{\mH}{\h^{ss}_{n,d}}
\newcommand{\mdR}{\m^{\mathrm{dR}}_{n,d}}
\newcommand{\mHdg}{\m^{\mathrm{Hdg}}_{n,d}}
\newcommand{\mHs}{\h^{s}_{n,d}}
\title[On the Voevodsky motive of the moduli space of Higgs bundles on a curve]{On the Voevodsky motive of the moduli space of Higgs bundles on a curve}
\author{Victoria Hoskins and Simon Pepin Lehalleur}
\thanks{V.H. is supported by the DFG Excellence Initiative at the Freie Universit\"{a}t Berlin and by the SPP 1786.}
\begin{document}

\maketitle

\begin{abstract}
We study the motive of the moduli space of semistable Higgs bundles of coprime rank and degree on a smooth projective curve $C$ over a field $k$ under the assumption that $C$ has a rational point. We show this motive is contained in the thick tensor subcategory of Voevodsky's triangulated category of motives with rational coefficients generated by the motive of $C$. Moreover, over a field of characteristic zero, we prove a motivic non-abelian Hodge correspondence: the integral motives of the Higgs and de Rham moduli spaces are isomorphic. 
\end{abstract}

\tableofcontents

\section{Introduction}

\subsection{Moduli of Higgs bundles and their cohomological invariants}

Let $C$ be a smooth projective geometrically connected genus $g$ curve over a field $k$. A Higgs bundle over $C$ is a vector bundle $E$ together with a Higgs field, which is an $\mathcal{O}_{C}$-linear map $E\to E\otimes \omega_{C}$. There is a notion of (semi)stability analogous to the classical notion for vector bundles and a construction via geometric invariant theory of the moduli space $\mH$ of semistable Higgs bundles of rank $n$ and degree $d$ over $C$. We assume that $n$ and $d$ are coprime; this implies that semistability coincides with stability, and that $\mH$ is a smooth quasi-projective variety of dimension $2n^2(g-1) +2$.

Hitchin's original motivation for introducing Higgs bundles and their moduli in \cite{hitchin} came from mathematical physics, but these spaces now play a central role in many subfields of geometry. Most notably, over the complex numbers, moduli spaces of Higgs bundles are (non-compact) hyperk\"{a}hler manifolds which are isomorphic as real analytic manifolds to moduli spaces of representations of the fundamental group of $C$ and moduli spaces of holomorphic connections via Simpson's non-abelian Hodge correspondence \cite{simpson_NAHT}. 

For a long time, the cohomology of $\mH$ was quite mysterious. One recent breakthrough was the precise conjectural formulae for the Betti numbers over the complex numbers by Hausel and Rodriguez-Villegas \cite{hauselRV} (predicted via point-counting arguments for character varieties over finite fields); the conjecture was recently proved by Schiffmann \cite{schiffmann}, Mozgovoy-Schiffmann \cite{ms} and Mellit \cite{mellit} by counting absolutely indecomposable vector bundles over finite fields, in the spirit of Kac's theory for quiver representations, and using Hall algebra techniques.

Our paper follows a different geometric strategy, which can be traced back to the original paper of Hitchin \cite{hitchin}. There, he used a scaling $\GG_{m}$-action on the Higgs field to compute the Betti numbers of $\h_{2,d}^{ss}(\mathbb{C})$, and Gothen \cite{gothen} extended this approach to rank $3$. This scaling action was later studied by Simpson \cite{simpson_NAHT} in higher ranks. The components of the $\GG_{m}$-fixed loci are moduli spaces of chains of vector bundles on $C$ which are semistable with respect to a certain stability parameter, and the cohomology of $\mH$ can be described in terms of the cohomology of these moduli spaces of chains by using the classical techniques of Bia{\l}ynicki-Birula \cite{BB}. Since these moduli spaces of semistable chains are smooth projective varieties, this shows that the cohomology of $\mH$ is pure (this purity was observed by Hausel and Thaddeus \cite[Theorem 6.2]{HT} and Markman \cite{markman}). Moreover, the problem of describing the cohomology of $\mH$ reduces to the problem of describing the cohomology of moduli spaces of chains which are semistable with respect to a specific stability parameter. 

The classes of moduli spaces of Higgs bundles and moduli spaces of chains in the Grothendieck ring of varieties were studied by Garc\'{i}a-Prada, Heinloth and Schmitt \cite{GPHS,GPH,Heinloth_LaumonBDay}. One key geometric idea is to vary the chain stability parameter and use wall-crossing in terms of unions of Harder--Narasimhan strata to inductively write the classes of moduli stacks of semistable chains for certain stability parameters using classes of simpler stacks. More precisely, it suffices to compute the classes of certain moduli stacks of generically surjective chains of constant rank, for which there are explicit formulas in terms of the classes of symmetric powers of the curve and classes of stacks of vector bundles over the curve. The latter were determined by Behrend and Dhillon \cite{BD}. This gives a recursive algorithm to compute the class of $\mH$ in the Grothendieck ring of varieties \cite{GPH} and it follows that the class of $\mH$ can be expressed in terms of classes of symmetric powers of $C$, the Jacobian of $C$ and powers of the Lefschetz class.

\subsection{Our results}

In this paper, we study the motive $M(\mH)$ in Voevodsky's triangulated category $\DM^{\eff}(k,R)$ of (effective) mixed motives over $k$ with coefficients in a ring $R$ such that the exponential characteristic of $k$ is invertible in $R$ under the assumption that $C(k) \neq \emptyset$. By construction $M(\mH)$ lies in the subcategory $\DM^{\eff}_{c}(k,R)$ of compact objects of $\DM^{\eff}(k,R)$.

Our first main result adapts the geometric ideas in \cite{GPHS,GPH,Heinloth_LaumonBDay} to Voevodsky's triangulated category $\DM^{\eff}(k,R)$ of mixed motives over $k$.

\begin{thm}\label{main_thm_intro}
Assume that $C(k) \neq \emptyset$ and that $R$ is a $\QQ$-algebra. Then the motive $M(\mH)$ lies in the thick tensor subcategory $\langle M(C)\rangle^{\otimes}$ of $\DM^{\eff}_{c}(k,R)$ generated by $M(C)$. More precisely, $M(\mH)$ can be written as a direct factor of the motive of a large enough power of $C$.
\end{thm}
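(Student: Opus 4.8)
The plan is to combine three ingredients: (1) the Białynicki–Birula decomposition of the scaling $\GG_m$-action on $\mH$ into moduli spaces of chains, (2) a wall-crossing/Harder–Narasimhan recursion on the level of motives of stacks of chains, reducing to stacks of generically surjective constant-rank chains, and (3) the computation of the motives of these building blocks in terms of $M(C)$, $M(\Jac C)$ and Tate twists. First I would set up the motivic Białynicki–Birula decomposition: since $\mH$ is smooth quasi-projective and carries the Hitchin $\GG_m$-action whose fixed locus is a disjoint union of moduli spaces $\ch^{\tau}_{\underline n, \underline d}$ of semistable chains, a Gysin-triangle argument (of the kind used for motives of schemes with $\GG_m$-action, cf.\ the motivic decomposition theorems à la Brosnan/Karpenko, or directly the Bialynicki–Birula stratification with affine-bundle strata) expresses $M(\mH)$ as a direct sum of Tate twists of the motives $M(\ch^{\tau}_{\underline n, \underline d})$ of the fixed components. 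Here one uses that $C(k)\neq\emptyset$ to trivialise the relevant $\GG_m$-torsors / guarantee the BB strata are Zariski-locally trivial affine bundles, so that the associated Gysin triangles split after tensoring with $\QQ$.

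Next I would reduce the motives of the smooth projective moduli spaces of semistable chains $\ch^{\tau}_{\underline n,\underline d}$ to motives of \emph{stacks} of chains. Working in a suitable category of motives of (nice) algebraic stacks — as in Behrend–Dhillon's computation of the motive of $\Bun_{n,d}$ — one exploits that for a generic/coprime-type stability parameter the semistable chain stack is a $\GG_m$-gerbe over the moduli space, so $M(\ch^{\tau})$ is a direct factor of the motive of the stack $\Ch^{\alpha,ss}$. Then one varies the stability parameter $\alpha$ and performs wall-crossing: across each wall the stack $\Ch^{\alpha,ss}$ changes by Harder–Narasimhan strata, each of which is (up to affine bundles) a product of moduli stacks of semistable chains of smaller invariants. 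Iterating the recursion — exactly the strategy of García-Prada–Heinloth–Schmitt transported to $\DM^{\eff}$ — one writes $M(\Ch^{\alpha,ss})$ as a $\ZZ[\LL]$-linear (where $\LL = M(\PP^1)/R$ is the Lefschetz motive, which lies in $\langle M(C)\rangle^{\otimes}$ since $M(\PP^1) = R \oplus \LL$ and $\LL$ already appears in $M(C)$ when $g\ge 1$, or one uses a rational point to split it) combination of motives of stacks of generically surjective chains of constant rank.

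For the base case, the stack $\Chgs$ of generically surjective constant-rank chains fibers over a product of stacks $\Bun_{n_i,d_i}$ of vector bundles and symmetric powers $\Sym^m C$ of the curve (encoding the degeneracy divisors of the chain maps), with affine-space fibers. By Behrend–Dhillon the motive $M(\Bun_{n,d})$ lies in the (completed) thick tensor subcategory generated by $M(C)$ — indeed it is an explicit infinite product/series in $M(C)$ and $\LL$ — and $M(\Sym^m C)$ is a direct factor of $M(C^m) = M(C)^{\otimes m}$, hence visibly in $\langle M(C)\rangle^{\otimes}$. Assembling: each building block lies in $\langle M(C)\rangle^{\otimes}$, the recursion is finite at each finite stage, and the BB decomposition is a finite direct sum, so $M(\mH)$ lies in $\langle M(C)\rangle^{\otimes}$; tracking the construction shows it is cut out by an idempotent from $M(C^N)$ for $N$ large, giving the "direct factor of a power of $C$" refinement.

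The main obstacle I expect is the bookkeeping needed to pass between motives of stacks and motives of schemes while keeping everything in the \emph{compact} (rather than merely pro-)category: the Behrend–Dhillon formula for $M(\Bun_{n,d})$ is a priori an infinite series, so one must argue that in each finite step of the chain-stack recursion only finitely many terms contribute — this is where the finiteness built into the bounded Harder–Narasimhan recursion of \cite{GPH} is essential — and that the gerbe/$\GG_m$-quotient steps genuinely split rationally. Rationality of $R$ is used precisely to guarantee that all the relevant extensions (BB Gysin triangles, gerbe classes, torsors for unipotent or $\GG_m$ groups) split, reducing the homotopy colimits to honest finite direct sums.
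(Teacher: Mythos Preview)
Your overall architecture is right---Bia{\l}ynicki--Birula, then pass to stacks via the $\GG_m$-gerbe, then wall-crossing/HN recursion down to generically surjective chains, then Hecke/$\Bun$ formulas---and matches the paper. But there is a genuine gap in how you manage compactness, and the ``direct factor'' refinement is not obtained the way you suggest.

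\textbf{The compactness gap.} You write that ``each building block lies in $\langle M(C)\rangle^{\otimes}$'' and that ``only finitely many terms contribute'' at each step of the recursion. This is not true and is not how the paper proceeds. The stacky motives you must pass through---$M(\mathcal{B}un_{n,d})$, $M(\mathcal{C}h^{\mathrm{gen\text{-}surj}})$, $M(\mathcal{C}h^{\alpha,ss})$---are \emph{not} compact: already $M(B\GG_m)=\bigoplus_{j\geq 0}\QQ\{j\}$ is an infinite sum, and the HN decomposition of $\mathcal{C}h^{\mathrm{gen\text{-}surj}}$ into $\alpha_\infty$-strata is genuinely infinite. So the recursion only shows that these stacky motives lie in the \emph{localising} tensor subcategory $\llangle M(C)\rrangle^{\otimes}$, and the paper's Proposition~6.1 is written precisely to handle such countable stratifications via homotopy colimits of Gysin triangles. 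The passage back to the thick category is a separate step: one observes that $M(\mH)$ itself is compact (it is the motive of a finite-type scheme), and then invokes the general fact (Lemma~6.3 in the paper) that $\langle M\rangle^{\otimes}=\llangle M\rrangle^{\otimes}\cap \DM^{\eff}_c(k,R)$ for any compact $M$. Your proposed workaround of arguing finiteness at each stage would require controlling which HN strata actually contribute, and no such control is available.

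\textbf{The direct-factor refinement.} ``Tracking the construction'' does not produce an idempotent on $M(C^N)$: the diagram of distinguished triangles is inexplicit and involves non-split triangles and infinite colimits. The paper instead argues abstractly: the BB decomposition expresses $M(\mH)$ as a sum of Tate twists of motives of smooth \emph{projective} varieties, so $M(\mH)$ is pure (lies in the heart of Bondarko's Chow weight structure). One then shows (Lemma~6.5) that the Chow weight structure restricts to $\langle M(C)\rangle^{\otimes}$ with heart the idempotent-complete additive tensor category generated by $M(C)$, i.e.\ direct factors of motives of powers of $C$. Since $M(\mH)$ is both pure and in $\langle M(C)\rangle^{\otimes}$, it lies in this heart.

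\textbf{Misattributed hypotheses.} The BB Gysin triangles split integrally for any semi-projective action (Appendix~A); they need neither $\QQ$-coefficients nor $C(k)\neq\emptyset$. The hypothesis $C(k)\neq\emptyset$ is used to make the $\GG_m$-gerbes $\mathcal{C}h^{\alpha,s}\to\ch^{\alpha,s}$ \emph{trivial}. The $\QQ$-algebra hypothesis on $R$ enters only through the Hecke formula $M(\Hecke^l_{\cE/\cT})\simeq M(\cT)\otimes M(\Sym^l(C\times\PP^{n-1}))$ and the formula for $M(\mathcal{B}un_{n,d})$, not through any of the splittings you mention.
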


In fact, as $C(k) \neq \emptyset$, the stack $\cH^{ss}_{n,d}$ of semistable Higgs bundles is a trivial $\GG_m$-gerbe over $\mH$ and the ideas in the proof of Theorem \ref{main_thm_intro} provides a more precise description of the motive of $\cH^{ss}_{n,d}$ as fitting in a explicit sequence of distinguished triangles built from known motives (Corollary \ref{cor motive Higgs stack}), which we hope will lead to precise computations in small ranks. The promised description as a direct factor is unfortunately completely inexplicit and relies on a general observation about pure motives and weight structures; see Lemma \ref{lemma:weight-structure}.

Our second main result, over a field of characteristic zero, compares the integral motives of $\mH$ and the de Rham moduli space $\mdR$ appearing in the non-abelian Hodge correspondence (see $\S$\ref{sec motivic_NAH} for the precise definition). This motivic counterpart to the non-abelian Hodge correspondence is relatively easy to prove (and has some antecedents for cohomology in the literature) but does not seem to have been observed before, even at the level of Chow groups.

\begin{thm}\label{thm:motivic-hodge-intro}
Assume that $k$ is a field of characteristic zero and $C(k)\neq \emptyset$. For any commutative ring $R$, there is a canonical isomorphism
  \[
M(\mH)\simeq  M(\mdR)
  \]
  in $\DM^{\eff}(k,R)$ which also induces an isomorphism of Chow rings
  \[
\CH^{*}(\mH,R)\simeq \CH^{*}(\mdR,R).
    \]
\end{thm}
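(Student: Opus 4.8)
The plan is to exploit the fact that, in characteristic zero, the Higgs and de Rham moduli spaces are both total spaces of families over the same base with the same fibrewise structure, so that the isomorphism is produced directly rather than via a comparison of cohomological invariants. Concretely, over the affine line $\AA^1_k$ with coordinate $t$, there is the Deligne--Hitchin / $\lambda$-connection moduli space $\m^{\mathrm{Hdg}}_{n,d}$ whose fibre over $t=0$ is $\mH$ and whose fibre over any $t\neq 0$ is $\mdR$ (all fibres over $t\neq 0$ being isomorphic because rescaling $\lambda$ by a unit is an isomorphism of moduli problems). The scaling $\GG_m$-action on $\lambda$ makes $\m^{\mathrm{Hdg}}_{n,d}\to \AA^1_k$ a $\GG_m$-equivariant family with $\GG_m$ acting on $\AA^1_k$ by scaling; the key geometric input is that this family is \emph{Zariski-locally trivial} over $\AA^1_k$, or at least that the two restrictions (to $\{0\}$ and to $\GG_m$) are related by an $\AA^1$-homotopy. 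I would first carefully recall the construction of $\m^{\mathrm{Hdg}}_{n,d}$ and check smoothness and the identification of the fibres.

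The next step is to extract the motivic isomorphism from this family. Since $\m^{\mathrm{Hdg}}_{n,d}\to \AA^1_k$ is smooth and $\GG_m$-equivariant for the scaling action on the base, a standard argument (using $\AA^1$-invariance of $\DM^{\eff}(k,R)$ together with the contracting $\GG_m$-action identifying the motive of the total space with the motive of the special fibre, cf.\ the Bia{\l}ynicki-Birula-type arguments already invoked in the introduction) shows that the closed immersion $\mH = \m^{\mathrm{Hdg}}_{n,d}|_{t=0}\hookrightarrow \m^{\mathrm{Hdg}}_{n,d}$ induces an isomorphism $M(\mH)\xrightarrow{\sim} M(\m^{\mathrm{Hdg}}_{n,d})$. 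Separately, the open immersion $\m^{\mathrm{Hdg}}_{n,d}|_{\GG_m}\hookrightarrow \m^{\mathrm{Hdg}}_{n,d}$ combined with the fact that $\m^{\mathrm{Hdg}}_{n,d}|_{\GG_m}\cong \mdR\times\GG_m$ and a further $\AA^1$-homotopy argument (or a direct retraction coming from the $\GG_m$-action) yields $M(\mdR)\xrightarrow{\sim} M(\m^{\mathrm{Hdg}}_{n,d})$. Composing the two gives the desired canonical isomorphism $M(\mH)\simeq M(\mdR)$ in $\DM^{\eff}(k,R)$, valid for an arbitrary coefficient ring $R$ since none of these steps requires rational coefficients. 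Functoriality of $M(-)$ on smooth schemes and of the Bia{\l}ynicki-Birula decomposition ensures the isomorphism is canonical.

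For the statement about Chow rings, I would pass from the motivic isomorphism to cohomology: morphisms in $\DM^{\eff}(k,R)$ from $M(X)$ to $R(q)[p]$ compute motivic cohomology, and for smooth $X$ the groups $\CH^q(X,R)$ are recovered as $H^{2q}_{\mathcal{M}}(X,R(q)) = \Hom_{\DM}(M(X),R(q)[2q])$. Thus the isomorphism $M(\mH)\simeq M(\mdR)$ immediately induces graded isomorphisms on Chow groups; to see it respects the ring structure one notes that cup product is induced by the diagonal $M(X)\to M(X)\otimes M(X)$, and the isomorphism above is compatible with diagonals because it is realised by actual morphisms of schemes (the inclusions of fibres into $\m^{\mathrm{Hdg}}_{n,d}$) at each stage.

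The main obstacle I anticipate is the second identification: showing that $M(\mdR)\to M(\m^{\mathrm{Hdg}}_{n,d})$ is an isomorphism is more delicate than the special-fibre statement, because $\GG_m\subset\AA^1$ is not contractible, so one cannot simply contract to a point. One must instead use that the total space over $\GG_m$ is, after the change of variables $\lambda\mapsto 1$, literally a product $\mdR\times\GG_m$, and then argue that the inclusion of this open part induces an isomorphism on motives --- which requires either a Gysin/localisation argument controlling the motive of the complement $\mH$ inside $\m^{\mathrm{Hdg}}_{n,d}$, or a direct deformation-retraction of $\m^{\mathrm{Hdg}}_{n,d}$ onto $\m^{\mathrm{Hdg}}_{n,d}|_{\GG_m}$ coming from the $\GG_m$-action, whose existence in the algebraic (as opposed to real-analytic) category needs to be verified carefully. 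Getting this compatibility clean, and making sure all identifications are the expected ones on the nose so that canonicity holds, is where the real work lies.
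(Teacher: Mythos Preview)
Your overall strategy coincides with the paper's: use the Deligne--Simpson $\lambda$-connection moduli space $\mHdg\to\AA^1$, observe it carries a semi-projective $\GG_m$-action, and show both fibre inclusions induce isomorphisms on motives. The special-fibre step via Bia{\l}ynicki-Birula and the Chow-ring deduction are handled essentially as in the paper.

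The genuine gap is in the generic-fibre step, which you rightly flag as the crux. None of your proposed routes works as stated. The family $\mHdg\to\AA^1$ is not known to be Zariski-locally trivial. There is no algebraic retraction of $\mHdg$ onto its restriction over $\GG_m$: the $\GG_m$-flow contracts \emph{towards} the special fibre, not away from it. And the assertion that the open inclusion $\mHdg|_{\GG_m}\hookrightarrow\mHdg$ induces an isomorphism on motives is simply false: the Gysin triangle for the codimension-one closed complement $X_0=\mH$ has nonzero cofibre $M(X_0)\{1\}$. A bare Gysin-and-splitting argument only yields $M(X_1)\oplus M(X_1)(1)[1]\simeq M(X)\oplus M(X)(1)[1]$ (using $X\setminus X_0\simeq X_1\times\GG_m$), from which one cannot cancel in a triangulated category.

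The paper's argument (Theorem~\ref{thm:semiproj-spe}) supplies the missing idea: compare the Gysin triangle for the pair $(X,X_0)$ with the trivially split one for $(X\times\AA^1,\,X\times\{0\})$ via the morphism of closed pairs induced by $\id\times f$. Because $X_0$ is a fibre of a smooth morphism its normal bundle is trivial, so the Euler class vanishes and both boundary maps are zero. In the resulting morphism of distinguished triangles, two of the three vertical maps are isomorphisms (one by the already-proved $M(X_0)\simeq M(X)$, the other by $\AA^1$-invariance), hence so is the third, giving $M(X\setminus X_0)\simeq M(X\times\GG_m)$ induced by $\id\times f$. A final commutative square comparing $X_1\times\GG_m\stackrel{\sim}{\to} X\setminus X_0$ with $X\times\GG_m$ (via the action map) and K\"unneth then shows that the fibre inclusion $\iota_1:X_1\hookrightarrow X$ itself induces the isomorphism $M(X_1)\simeq M(X)$.
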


The proof of Theorem \ref{main_thm_intro} and of its refined form (Corollary \ref{cor motive Higgs stack}) is a priori more complicated than in the context of the Grothendieck ring of varieties, as the cut and paste relations $[X] = [Z] + [X \setminus Z]$ for a closed subvariety $Z \subset X$ play a key role in the proof of \cite{GPH}, whereas in $\DM^{\eff}(k,R)$ one only has an associated localisation distinguished triangle (and Gysin triangle for smooth pairs) which does not split in general.

However, there are some circumstances in which these triangles split: in particular, the Gysin triangles associated to a Bia{\l}ynicki-Birula decomposition of a smooth projective variety with a $\GG_m$-action split and one obtains motivic decompositions \cite{Brosnan, Choudhury_Skowera, Karpenko} and in fact, in the appendix to this paper, we show this is also true for smooth quasi-projective varieties with a so-called semi-projective $\GG_m$-action (see Theorem \ref{thm_bb_mot}). This provides the first step of our argument: since the scaling action on the moduli space of Higgs bundles $\mH$ is semi-projective, the motive of $\mH$ can be expressed in terms of Tate twists of motives of certain moduli spaces of semistable chains, which are smooth and projective. From this, we already deduce the purity of $M(\mH)$.

It is at this point in the logical development, by combining motivic Bia{\l}ynicki-Birula decompositions with the geometry of the Deligne-Simpson moduli space of $\lambda$-connections, that we prove Theorem \ref{thm:motivic-hodge-intro} (see Theorem \ref{thm:motivic-hodge}); in fact, it is obtained as a corollary of a general fact about equivariant specialisations of semi-projective $\GG_m$-actions (Theorem \ref{thm:semiproj-spe}).

The motives of the moduli spaces of semistable chains appearing in the Bia{\l}ynicki-Birula decomposition of $\mH$ can be expressed in terms of the motives of the corresponding moduli stacks of semistable chains when $C(k) \neq \emptyset$, since this implies these stacks are trivial $\GG_m$-gerbes over their coarse moduli space.  Therefore, it remains to describe the motives of the corresponding moduli stacks of semistable chains. We follow the geometric ideas in \cite{GPHS,GPH,Heinloth_LaumonBDay}, which involves a wall-crossing argument together with a Harder--Narasimhan (HN) recursion.

The space of stability parameters for chains has a wall and chamber structure such that (semi)stability is constant in chambers and as one crosses a wall, the stacks of semistable chains for the wall parameter is a union of the stacks of semistable chains and finitely many stacks of chains of fixed HN type for the stability parameters on either side of wall. We use a path in the space of stability parameters constructed in \cite{GPHS} starting from (a small perturbation of) the Higgs stability parameter $\alpha_H$ which ends in a chamber where either the semistable locus is empty or is contained in a moduli stack of generically surjective chains of constant rank. We obtain a diagram of distinguished triangles relating the motives of the stacks of $\alpha_H$-semistable chains we are interested in with the motives of stacks of generically surjective chains of constant rank and higher HN strata for various stability parameters along the path.

The motives of moduli stacks of chains of fixed (non-trivial) HN type can be described inductively using the fact that the map taking a chain to its the associated graded for the HN filtration is a Zariski locally trivial affine space fibration over a product of moduli stacks of semistable chains of smaller ranks.

The last step of the computation, and the one which requires the most additional work compared to \cite{GPHS}, is to lift the formula for the classes of stacks of generically surjective chains of constant rank in \cite[Lemma 4.9]{GPHS} to $\DM^{\eff}(k,R)$ and this is where we are forced to assume that $R$ is a $\QQ$-algebra. These stacks turn out to be iterated moduli stacks of Hecke correspondences over moduli stacks of vector bundles over $C$. In \cite{HPL,HPL_formula} we prove a formula for the rational motive of the stack $\Bun$ of rank $n$ degree $d$ vector bundles on $C$ under the assumption that $C(k)\neq \emptyset$; this formula involves motives of symmetric powers of the curve, the Jacobian of the curve and Tate twists (see \cite[Theorem 1.1]{HPL_formula}).  
The arguments in \cite{HPL_formula} involve calculating the motive of varieties of Hecke correspondences for a family of vector bundles over $C$ parameterised by a smooth variety $T$ (see \cite[Theorem 3.8]{HPL_formula}). We were inspired by a more sheaf-theoretic argument of Heinloth \cite[Proof of Proposition 11]{Heinloth_LaumonBDay}, which gives the rational cohomology of stacks of Hecke correspondences over any base using an argument based on ideas of Laumon \cite{laumon} involving the cohomology of small maps which are generically principal bundles. Heinloth's proof uses perverse sheaves and cannot be applied in $\DM^{\eff}(k,R)$, but we gave a more geometric form of the argument which can be made to work using the six operations formalism for \'etale motives. It remains to extend this formula to the case where we replace the smooth variety $T$ by the smooth stack $\Bun$. Since in \cite[Theorem 3.2]{HPL} we prove that $\Bun$ is a so-called exhaustive stack (see \cite[Definition 2.15]{HPL}), it suffices to prove the formula extends to smooth exhaustive stacks, which is what we do in \S \ref{sec small maps}. More precisely, we obtain the following result, which completes the proof of Theorem \ref{main_thm_intro}.

\begin{thm}\label{thm_hecke}
Assume that $R$ is a $\QQ$-algebra. Let $\cE$ be a family of rank $n$ vector bundles over $C$ parametrised by a smooth exhaustive algebraic stack $\cT$. Then the stack $\Hecke^{l}_{\cE/\cT}$ of length $l$ Hecke modifications of $\cE$ (i.e. subsheaves $\cF \subset \cE$ whose quotient is a family of length $l$ torsion sheaves) is smooth and exhaustive, and we have an isomorphism
\[ M(\Hecke^l_{\cE/\cT}) \cong M(\cT) \otimes M(\Sym^l(C \times \PP^{n-1}))\]
in $\DM^{\eff}(k,R)$.
\end{thm}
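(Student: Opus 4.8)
The plan is to reduce the statement to the case of a smooth variety base, which is known (this is essentially \cite[Theorem 3.8]{HPL_formula}), by exploiting the exhaustive structure of $\cT$. Recall that an exhaustive stack is, roughly, one admitting a filtration by open substacks each of which is approximated by a smooth variety in the sense that the approximation induces isomorphisms on motives up to arbitrarily high "codimension"; this is exactly the structure that allows one to transfer motivic identities from smooth varieties to such stacks. First I would establish that $\Hecke^l_{\cE/\cT} \to \cT$ is representable by a smooth projective-over-base morphism (it is an iterated fibration with fibres flag-Hecke varieties, built out of $\PP(\cE)$-type bundles), so that $\Hecke^l_{\cE/\cT}$ is itself smooth; the key point is that smoothness and exhaustiveness of the total space follow formally from smoothness and exhaustiveness of $\cT$ together with the fact that the structure morphism is a smooth representable morphism with a nice relative cell structure.

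Next I would set up the comparison. For each approximating smooth variety $T_i \to \cT$ (an atlas-approximation witnessing exhaustiveness) we have the pullback family $\cE_i = \cE|_{T_i}$, and by \cite[Theorem 3.8]{HPL_formula} there is an isomorphism $M(\Hecke^l_{\cE_i/T_i}) \cong M(T_i) \otimes M(\Sym^l(C\times \PP^{n-1}))$. The essential compatibility I need is that this isomorphism is natural in the base: pulling back the family along a morphism $T_j \to T_i$ of approximating varieties gives a commuting square relating the two isomorphisms, up to the error term controlled by exhaustiveness. I would verify naturality by tracing through the construction of the isomorphism in \cite{HPL_formula}, which is built from Gysin/localisation triangles and the projective bundle formula applied fibrewise, all of which are natural operations. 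Granting this, one takes the limit over the exhaustive filtration: $M(\Hecke^l_{\cE/\cT})$ is computed as the appropriate limit of the $M(\Hecke^l_{\cE_i/T_i})$, and since $- \otimes M(\Sym^l(C\times\PP^{n-1}))$ commutes with the relevant (co)limits and $M(\cT) = \colim M(T_i)$ (in the precise sense encoded in the definition of exhaustive, using that $\Sym^l(C\times\PP^{n-1})$ has a motive built from Tate twists and motives of powers of $C$, hence is "bounded"), the identity descends.

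The main obstacle I expect is twofold. First, making the limit argument precise: exhaustiveness gives isomorphisms only in a range of degrees/codimensions, so I must argue that the Hecke modification construction does not decrease this range uncontrollably — concretely, that the approximation $\Hecke^l_{\cE_i/T_i} \to \Hecke^l_{\cE/\cT}$ inherits a good enough codimension estimate from $T_i \to \cT$, which should follow because the structure morphism is smooth of constant relative dimension and the codimension of the complement pulls back. This is where the hypothesis that $\Hecke^l_{\cE/\cT}$ is again exhaustive does real work, and proving that exhaustiveness is preserved under such representable smooth morphisms is the technical heart. Second, the naturality of the \cite{HPL_formula} isomorphism in the parameter variety needs to be checked with enough care that it glues over an atlas rather than just holding for individual varieties; I would handle this by working with the motive of the Hecke stack relative to $\cT$ in the six-operations formalism for étale motives (where $\QQ$-coefficients and invertibility of the exponential characteristic are used), proving a relative projective-bundle-type decomposition of $M(\Hecke^l_{\cE/\cT}/\cT)$ directly over the stack, and only at the end pushing forward to recover the absolute statement. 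This relative approach largely bypasses the need to glue base-by-base and is, I expect, the cleanest route.
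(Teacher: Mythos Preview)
Your proposal is correct and follows essentially the same route as the paper: reduce to the scheme case of \cite[Theorem 3.8]{HPL_formula} via an approximating system $U_n\to\cT$, apply the formula on each $U_n$, and pass to the homotopy colimit. The paper packages your ``main obstacle'' much more cleanly than you anticipate: Proposition~\ref{prop:exhaustive} records once and for all that if $\cT'\to\cT$ is flat, finite type, separated and representable (as $\Hecke^l_{\cE/\cT}\to\cT$ is, being smooth projective), then the \emph{same} approximating diagram $(U_n)$ works for $\cT'$ via $U'_n:=U_n\times_\cT\cT'$, so there is no separate codimension bookkeeping and no need to argue that the tensor factor is ``bounded''. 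Your alternative relative six-operations approach over the stack is not taken in the paper; the direct hocolim argument already goes through, with naturality of the scheme-level isomorphism in the base being the only point one must check.
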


\noindent
\textbf{Notation and conventions.}

\noindent
Let $\cD$ be a tensor triangulated category admitting small direct sums and such that tensor products preserve small direct sums (for instance $\cD=\DM^{\eff}(k,R)$). Let $\cG$ be a set of objects in $\cD$. We denote by $\langle \cG \rangle$ (resp. $\llangle \cG \rrangle$) the smallest thick (resp. localising) subcategory of $\cD$ containing $\cG$; that is, the smallest triangulated subcategory of $\cD$ containing $\cG$ and furthermore stable by taking direct factors (resp. small direct sums). We also denote by $\langle \cG \rangle^{\otimes}$ the smallest thick (resp. localising) tensor subcategory of $\cD$ containing $\cG$; that is, the smallest triangulated subcategory of $\cD$ containing $\cG$ and furthermore stable by taking tensor products and direct factors (resp. tensor products and small direct sums).

It is easy to show that the subcategory $\langle \cG \rangle$ (resp. $\llangle \cG \rrangle$) admits a more concrete iterative description as a countable (resp. transfinite) union of full subcategories $(\langle \cG \rangle^{n})_{n\geq 0}$ (resp. $(\llangle \cG \rrangle^{\alpha})_{\alpha\text{ ordinal}}$), individually not triangulated in general, with
\begin{itemize}
\item $\langle \cG \rangle^{0}=\langle \cG\rangle^{0}$ the full subcategory on the set $\cup_{k\in\ZZ}\cG[k]$ of shifts of objects in $\cG$.
\item for all $n>0$ (resp. for all $\alpha>0$), $\langle \cG \rangle^{n}$ (resp. $\llangle \cG \rrangle^{\alpha}$) the full subcategory of objects which are either extensions or direct factors (resp. extensions or small direct sums) of objects in $\cup_{m<n} \langle \cG  \rangle^{m}$ (resp. $\cup_{\beta<\alpha} \llangle \cG \rrangle^{\beta}$).
\end{itemize}

\noindent \textbf{Acknowledgements.} We would like to thank Alexander Schmitt for suggesting in the first place to study the Voevodsky motive of the moduli space of Higgs bundles along these lines. We thank H\'el\`ene Esnault, Michael Groechenig, Jochen Heinloth, Francesco Sala and Carlos Simpson for conversations and exchanges around this project.

\section{Moduli of Higgs bundles and moduli of chains}

Throughout this section, we fix a smooth projective geometrically connected genus $g$ curve $C$ over a field $k$ and coprime integers $n\in \NN$ and $d\in \ZZ$. Let us introduce the main object of this paper, the moduli space $\mH$ of semistable Higgs bundles over $C$ of rank $n$ and degree $d$. 

\subsection{Moduli of Higgs bundles}

A Higgs bundle over $C$ is a pair $(E,\Phi)$ consisting of a vector bundle $E$ and a homomorphism $\Phi: E \ra E \otimes \omega_C$ called the Higgs field. The numerical invariants of the Higgs bundle are given by the rank $\rk(E)$ and degree $\deg(E)$ of the vector bundle. 

\begin{defn}
The slope of a Higgs bundle $(E,\Phi)$ is defined by $\mu(E):=\deg(E)/\rk(E)$. The Higgs bundle $(E,\Phi)$ is (semi)stable if for all Higgs subbundles $E'\subset E$ (that is, a vector subbundle $E' \subset E$ such that $\Phi(E') \subset E' \otimes \omega_C$), we have
\[ \mu(E') \: (\leq) \: \mu(E), \]
where $(\leq)$ denotes $\leq$ for semistability and $<$ for stability. We say $(E,\Phi)$ is geometrically (semi)stable if its pullback to $C_K:= C \times_k K$ is (semi)stable for all field extensions $K/k$.
\end{defn}

If we consider Higgs bundles of coprime rank and degree, then the notions of stability and semistability coincide. Every Higgs bundle has a unique \lq Harder--Narasimhan' filtration by Higgs subbundles whose successive quotients are semistable of strictly decreasing slopes. The uniqueness of the Harder--Narasimhan filtration can be used to show that the notions of semistability and geometric semistability coincide over any field. Over an algebraically closed field, the notions of stability and geometric stability coincide; however, over a non-algebraically closed field these notions can differ (see \cite[1.3.9]{HL}). For us all these notions will coincide, as $n$ and $d$ will be assumed to be coprime.

There is a moduli space $\mH$ of semistable Higgs bundles over $C$ with fixed invariants $n$ and $d$, which is a quasi-projective variety that can be constructed via geometric invariant theory \cite{simpson_IHES_I}. It contains an open subvariety $\mHs$, the moduli space of geometrically stable Higgs bundles, which is a smooth variety of dimension $2n^2(g-1) +2$ whose geometric points correspond to isomorphism classes $[E,\Phi]$ of stable Higgs bundles. Moreover, every (semi)stable vector bundle is a (semi)stable Higgs bundle with any Higgs field and the deformation theory of vector bundles implies that the cotangent bundle to the moduli space of semistable vector bundles is contained in the moduli space of semistable Higgs bundles.  

\subsection{The scaling action on the moduli space of Higgs bundles}\label{sec Gm action}

In this section we will exploit a natural $\GG_m$-action on the moduli space $\mH$ of semistable Higgs bundles over $C$ whose fixed loci are moduli spaces of semistable chains. This action was first used by Hitchin \cite{hitchin} to compute the Betti numbers of $\mH$ when $n = 2$ and was later used by Simpson \cite{simpson_NAHT} for higher ranks. The $\GG_m$-action is defined by scaling the Higgs field: for $t \in \GG_m$ and $[E,\Phi] \in \mH$, let
\[ t \cdot [E,\Phi] := [E, t \cdot \Phi].\]
The fixed loci and the flow under this $\GG_m$-action are described by the following result.

\begin{prop}[Hitchin, Simpson]\label{prop Higgs BB decomp}
The above $\GG_m$-action on $\mH$ is semi-projective (in the sense of Definition \ref{def semiproj Gm}) and thus there is a Bia{\l}ynicki-Birula decomposition
\[ \mH = \bigsqcup_{i \in I} H_i^+, \]
where $H_i$ are the connected components of $(\mH)^{\GG_m}$ and $H_i^+$ is the locally closed subvariety of $\mH$ consisting of points $x$ such that $\lim_{t \ra 0} t \cdot x \in H_i$. Moreover, the fixed components $H_i$ are smooth projective varieties and the natural retraction $H_i^+ \ra H_i$ is a Zariski locally trivial affine bundle. The strata $H_i^+$ are smooth locally closed subvarieties of $\mH$ of dimension $\frac{1}{2} \dim \mH$.
\end{prop}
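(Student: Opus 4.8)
The plan is to verify directly the two conditions of Definition~\ref{def semiproj Gm} for a semi-projective $\GG_{m}$-action on the smooth quasi-projective variety $\mH$ — that the fixed locus $(\mH)^{\GG_{m}}$ is proper, and that $\lim_{t\to 0}t\cdot x$ exists in $\mH$ for every closed point $x$ — and then to deduce the Bia{\l}ynicki-Birula decomposition, the smoothness and local closedness of the strata $H_{i}^{+}$, the affine bundle structure of the retractions $H_{i}^{+}\to H_{i}$, and the smoothness of the fixed components $H_{i}$ from the general theory of such decompositions for smooth quasi-projective semi-projective varieties (the classical statement underlying Theorem~\ref{thm_bb_mot}). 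In substance this is the analysis of Hitchin~\cite{hitchin} and Simpson~\cite{simpson_NAHT} recast in the present language; recall $\mH$ is smooth quasi-projective since $\gcd(n,d)=1$.

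Both conditions come from the Hitchin map $h\colon\mH\to A:=\bigoplus_{i=1}^{n}H^{0}(C,\omega_{C}^{\otimes i})$ sending $(E,\Phi)$ to the coefficients of the characteristic polynomial of $\Phi$: the map $h$ is proper (Hitchin, Nitsure, Simpson), the base $A$ is affine, and $h$ is $\GG_{m}$-equivariant for the action $t\cdot(a_{i})_{i}=(t^{i}a_{i})_{i}$ on $A$, all of whose weights $1,\dots,n$ are strictly positive. For the existence of limits, given a closed point $x$ the orbit map $\GG_{m}\to\mH$, $t\mapsto t\cdot x$, lies over the map $\GG_{m}\to A$, $t\mapsto t\cdot h(x)$, which extends to $\AA^{1}\to A$ by sending $0$ to $0$; the valuative criterion of properness applied to $h$ over the smooth curve $\AA^{1}$ then yields an extension $\AA^{1}\to\mH$ whose value at $0$ is $\lim_{t\to 0}t\cdot x$. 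For the properness of the fixed locus, a stable — hence simple — Higgs bundle $(E,\Phi)$ is fixed by the scaling action if and only if it is a system of Hodge bundles, i.e. $E=\bigoplus_{j}E_{j}$ with $\Phi(E_{j})\subseteq E_{j+1}\otimes\omega_{C}$: the isomorphisms realising $(E,t\Phi)\cong(E,\Phi)$, being unique up to scalar, assemble into a cocharacter of $\Aut(E)$ whose weight decomposition is the claimed grading. In particular $\Phi$ is then nilpotent, so $(\mH)^{\GG_{m}}\subseteq h^{-1}(0)$; as $h^{-1}(0)$ is proper (hence projective, being closed in the quasi-projective $\mH$), so is its closed subscheme $(\mH)^{\GG_{m}}$. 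Equivalently, the connected components of $(\mH)^{\GG_{m}}$ are the moduli spaces of holomorphic chains on $C$ semistable for the parameter induced by $\alpha_{H}$; since $(n,d)$ is coprime the relevant chain invariants make semistability coincide with stability, so these are smooth projective varieties, which also gives the smoothness of the $H_{i}$.

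Granting semi-projectivity, the decomposition $\mH=\bigsqcup_{i\in I}H_{i}^{+}$ and the asserted structure of the strata and retractions are formal. For the dimension assertion we use the symplectic geometry of $\mH$: its stable locus — which here is all of $\mH$ — carries a canonical algebraic symplectic form $\Omega$, and the scaling action rescales $\Omega$ with weight one. Consequently, at a fixed point $p\in H_{i}$ the nondegeneracy of $\Omega$ forces the $\GG_{m}$-weight subspaces $V_{w}\subseteq T_{p}\mH$ to be paired perfectly by $\Omega$ as $V_{w}\leftrightarrow V_{1-w}$, so that $\dim V_{w}=\dim V_{1-w}$ and $\sum_{w\geq 1}\dim V_{w}=\tfrac{1}{2}\dim\mH$. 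Since $T_{p}H_{i}=V_{0}$, the positive-weight part of the normal bundle $N_{H_{i}/\mH}$ at $p$ is $\bigoplus_{w\geq 1}V_{w}$, of dimension $\tfrac{1}{2}\dim\mH$; equivalently, the affine bundle $H_{i}^{+}\to H_{i}$ has relative dimension $\tfrac{1}{2}\dim\mH$. The same bookkeeping shows that the Tate twists occurring in the induced motivic decomposition of $M(\mH)$ are all by the common amount $\tfrac{1}{2}\dim\mH$, which is the point that will matter later.

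The crux is establishing semi-projectivity, and within it the two properness inputs above — properness of the nilpotent cone and existence of the $\GG_{m}$-limits — both of which rest on the (non-trivial) properness of the Hitchin map together with the nilpotency of Higgs fields at the fixed points. Once these hold, the structure of the Bia{\l}ynicki-Birula strata is formal, and the dimension count is the standard consequence of the symplectic form.
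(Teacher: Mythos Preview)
Your argument is considerably more detailed than the paper's, which simply attributes semi-projectivity to Hitchin and Simpson, invokes Theorem~\ref{thm_bb_dec}, and points to \cite[\S 9]{HT} for details. Your verification of the two semi-projectivity conditions via the properness and $\GG_{m}$-equivariance of the Hitchin map, together with the nilpotency of the Higgs field at fixed points, is correct and is precisely the standard argument hidden behind those citations.

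There is, however, a genuine slip in the last paragraph --- though the confusion originates in the proposition's own statement. Your symplectic computation correctly yields $\sum_{w\geq 1}\dim V_{w}=\tfrac{1}{2}\dim\mH$, which says that the affine bundle $H_{i}^{+}\to H_{i}$ has \emph{relative} dimension $r_{i}^{+}=\tfrac{1}{2}\dim\mH$. But this does not give $\dim H_{i}^{+}=\tfrac{1}{2}\dim\mH$ as the proposition asserts: in fact $\dim H_{i}^{+}=\dim H_{i}+r_{i}^{+}=\dim H_{i}+\tfrac{1}{2}\dim\mH$, which varies with $i$. For the component $H_{i}=\mBun$ (zero Higgs field) the attracting cell is the open subset $T^{*}\mBun\subset\mH$, of full dimension. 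What is true --- and presumably what is intended, following \cite{HT} --- is that the \emph{repelling} cells $H_{i}^{-}$, whose union is the Lagrangian nilpotent cone $h^{-1}(0)$, all have dimension $\tfrac{1}{2}\dim\mH$. Your final sentence about the Tate twists inherits the same conflation: in the decomposition $M(\mH)\simeq\bigoplus_{i}M(H_{i})\{c_{i}^{+}\}$ of Theorem~\ref{thm_bb_mot}\ref{mot BB}, the twists are the codimensions $c_{i}^{+}=\tfrac{1}{2}\dim\mH-\dim H_{i}$, which are not constant (and vanish for the open stratum); it is the decomposition of $M^{c}(\mH)$ in Theorem~\ref{thm_bb_mot}\ref{comp mot BB} whose twists $r_{i}^{+}$ are all equal to $\tfrac{1}{2}\dim\mH$. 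The same issue appears in the paper's equation~\eqref{motivic BB higgs}. Fortunately nothing downstream uses the precise value of the twist --- only that $M(\mH)$ is a finite direct sum of Tate twists of motives of smooth projective chain moduli spaces --- so the main results are unaffected, but your proof as written does not (and cannot) establish the dimension assertion as stated.
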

\begin{proof}
The fact that the $\GG_m$-action is semi-projective is due to Hitchin and Simpson and then by work of Bia{\l}ynicki-Birula \cite{BB} (see Theorem \ref{thm_bb_dec}), there exists a decomposition with the above description; for more details on the proof, see \cite[Section 9]{HT}.
\end{proof}

In fact, the fixed loci components also have a moduli theoretic description due to Hitchin and Simpson. If an isomorphism class $[E,\Phi]$ of a semistable Higgs bundle is fixed by this action, then either $\Phi = 0$ or we have $\GG_m \subset \Aut(E)$ which induces a weight decomposition $E = \oplus_i E_i$ such that $\Phi(E_i) \subset E_{i+1} \otimes \omega_C$, as $\GG_m$ acts on $\Phi$ with weight $1$. If $i_0$ denotes the minimum weight for which $E_{i_0}$ is non-zero, then we obtain a chain of vector bundle homomorphisms
\[ E_{i_0} \ra E_{i_0 +1} \otimes \omega_C \ra E_{i_0+2} \otimes \omega_C^{\otimes 2} \ra \cdots \]
which terminates after finitely many homomorphisms, as there are only finitely many weights appearing in the decomposition $E= \oplus E_i$. If we write $F_i:= E_{i_0 + i} \otimes \omega_C^{\otimes i}$, then this gives us a chain
\[ F_0 \ra F_1 \ra F_2 \ra \cdots \ra F_r \]
for some $r \in \NN$. Since $\omega_{C}$ is a line bundle, the $E_{i}$'s are uniquely determined by the $F_{i}$'s. The case $r= 0$ corresponds to vanishing Higgs field $\Phi = 0$; thus one fixed component is the moduli space of semistable rank $n$ degree $d$ vector bundles on $C$.  The other components of the fixed locus will be moduli spaces of semistable chains as we explain below.

\subsection{Moduli of chains}\label{sec moduli chains}

The fixed loci for the above scaling action on $\mH$ are moduli spaces of chains which are semistable with respect to a certain stability parameter. We give some basic properties of moduli of chains and explain their relationship with Higgs bundles.

A chain of length $r$ over $C$ is a collection of vector bundles $(F_i)_{i=0, \dots, r}$ and homomorphisms $(\phi_i : F_{i-1} \ra F_{i})_{i=1,\cdots r}$ between these vector bundles, which we write as 
\[ F_\bullet = (F_0 \stackrel{\phi_1}{\ra} F_{1} \ra \cdots \ra F_{r-1} \stackrel{\phi_r}{\ra} F_r).\]
The invariants of this chain are the tuples of ranks and degrees $\underline{\rk} (F_\bullet):= (\rk F_i)_{i=0,\dots , r}$ and $\underline{\deg} (F_\bullet):= (\deg F_i)_{i=0,\dots , r}$. There are natural notions of homomorphisms of chains and the category of chains over $C$ is an abelian category.

If we fix tuples $\underline{n}$ and $\underline{d}$ of ranks and degrees, then there is an algebraic stack $\Ch$ of chains with these invariants, which is locally of finite type \cite[\S 4.1]{GPHS}. There are notions of (semi)stability for chains depending on a stability parameter $\alpha$ and also natural notions of Harder--Narasimhan filtrations with respect to such a stability parameter. 

\begin{defn}\label{def slopes chains}
Let $\alpha= (\alpha_i)_{i=0,\dots, r} \in \RR^{r+1}$ be a tuple of real numbers. 
\begin{enumerate}
\item We define the $\alpha$-slope of a chain $F_\bullet$ by
\[ \mu_{\alpha}(F_\bullet) = \frac{\sum_{i=0}^r (\deg F_i + \alpha_i \rk(F_i))}{\sum_{i=0}^r \rk F_i }. \]
Note that this only depends on the numerical invariants $(\rk(F_\bullet),\deg(F_\bullet))$ of the chain. A chain $F_\bullet$ is $\alpha$-(semi)stable if for all proper subchains $F'_\bullet \subset F_\bullet$ we have
\[ \mu_{\alpha}(F'_\bullet) (\leq ) \mu_{\alpha}(F_\bullet),\]
where $(\leq)$ denotes $\leq$ for semistability and $<$ for stability. We say $F_\bullet$ is geometrically $\alpha$-(semi)stable if its pullback to $C_K$ is $\alpha$-(semi)stable for all field extensions $K/k$.
\item The stability parameter $\alpha$ is critical for given numerical invariants $\underline{n}$ and $\underline{d}$ if there exists $\underline{0} <\underline{n}' < \underline{n}$ (that is, $0 \leq n'_{i}\leq n_{i}$ for all $i$ with at least one strict inequality) and $\underline{d}'$ such that $\mu_{\alpha}(\underline{n}',\underline{d}') = \mu_{\alpha}(\underline{n},\underline{d})$. Otherwise, we say $\alpha$ is non-critical for these invariants.
\item Every chain has a unique Harder--Narasimhan (HN) filtration with respect to $\alpha$
\[ 0 = F^{(0)}_\bullet \subset F^{(1)}_\bullet \subset \cdots \subset F^{(l)}_\bullet = F_\bullet \]
such that $F^i_\bullet:= F^{(i)}_\bullet / F^{(i+1)}_\bullet$ are $\alpha$-semistable with strictly decreasing $\alpha$-slopes \cite[Lemma 4.2]{GPHS}. The $\alpha$-HN type of $F_\bullet$ records the numerical invariants of the subquotients and we write this as a tuple $ (\underline{\rk}(F^{j}_\bullet),\underline{\deg}(F^{j}_\bullet))_{j=1,\dots, l}$. 
We let $\Chss$ denote the substack of $\alpha$-semistable chains and let $\Chtau$ denote the substack of chains of $\alpha$-HN type $\tau$. The substack $\Chss$ is open in $\Ch$ while each $\Chtau$ is locally closed.
\end{enumerate}
\end{defn}

By definition, if $\alpha$ is non-critical for $\underline{n}$ and $\underline{d}$, the notions of $\alpha$-semistability and $\alpha$-stability for chains with these invariants coincide. Similarly to the discussion for Higgs bundles, if $\alpha$ is non-critical for $\underline{n}$ and $\underline{d}$, then all these notions of (semi)stability coincide.

There are moduli spaces $\mCh$ of $\alpha$-semistable chains over $C$ with fixed invariants $\underline{n}$ and $\underline{d}$ which are projective varieties that can be constructed as geometric invariant theory quotients (see \cite{schmitt_moduli}). Furthermore, the deformation theory of chains is described in \cite[Section 3]{ACGPS}.

Since geometrically $\alpha$-stable chains have automorphism groups isomorphic to the multiplicative group $\GG_m$, the stack of geometrically $\alpha$-stable chains $\Chs$ is a $\GG_m$-gerbe over its coarse moduli space $\mChs$, the moduli space of geometrically $\alpha$-stable chains over $C$. Moreover, if we assume $C(k) \neq \emptyset$, then for non-critical values of $\alpha$ for $\underline{n}$ and $\underline{d}$, the $\GG_m$-gerbe $\Chs \ra \mChs$ is trivial (for example, this can be proved using \cite[Lemma 3.10]{heinloth_lectures}). 

Consider the following cones of stability parameters
\begin{equation*}
\Delta_r:=\{ \alpha \in \RR^{r+1} : \alpha_i - \alpha_{i+1} \geq 2g -2 \} \quad \text{and} \quad \Delta_r^\circ :=\{ \alpha \in \RR^{r+1} : \alpha_i - \alpha_{i+1} > 2g -2 \}.
\end{equation*}
If $\alpha \in \Delta_r$ and $\alpha$ is non-critical for the invariants $\underline{n}$ and $\underline{d}$, then the moduli spaces $\mCh = \mChs$ (and also the corresponding moduli stacks) are smooth by \cite[Theorem 3.8 vi)]{ACGPS}. For $\alpha \in \Delta_r^\circ$, the moduli stack $\Chss$ of $\alpha$-semistable chains is smooth by \cite[Proposition 3.5 ii)]{ACGPS} (see also \cite[Lemma 4.6]{GPHS} and \cite[Section 2.2]{heinloth_int_form}). Moreover, the HN-strata for stability parameters in the cone $\Delta_r^\circ$ admit the following description.

\begin{prop}\label{prop HN strata geom descr}
\cite[Lemma 4.6 and Proposition 4.8]{GPHS}
Let $\alpha \in \Delta_r^\circ$. For an $\alpha$-HN type $\tau = (\underline{n}^j,\underline{d}^j)_{j=1,\dots, l}$, the morphism given by taking the associated graded for the $\alpha$-HN filtration
\[ \mathrm{gr} : \Chtau \longrightarrow \prod_{j=1}^l \mathcal{C}h_{\underline{n}^j,\underline{d}^j}^{\alpha,ss} \]
is an affine space fibration. 
Thus the stack $\Chtau$ of chains with $\alpha$-HN type $\tau$ is also smooth.
\end{prop}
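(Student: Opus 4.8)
The plan is to prove Proposition~\ref{prop HN strata geom descr} by reducing the statement about HN strata of chains to the corresponding statement about HN filtrations in abelian categories and then invoking the deformation theory of chains. First I would recall from \cite[Lemma 4.2]{GPHS} that for $\alpha \in \Delta_r^\circ$ every chain $F_\bullet$ of invariants $(\underline n,\underline d)$ has a unique $\alpha$-HN filtration, so that the map $\mathrm{gr}$ is well-defined on $k$-points, and then upgrade this to a morphism of stacks: the HN filtration varies in families, since the destabilising subchain of maximal $\alpha$-slope is flat over the base (this is the usual relative HN-filtration argument, as in \cite[\S4]{GPHS}), hence one gets a morphism $\Chtau \to \prod_j \mathcal{C}h^{\alpha,ss}_{\underline n^j,\underline d^j}$ recording the graded pieces. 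The target is smooth by the results quoted just before the proposition (\cite[Proposition 3.5 ii)]{ACGPS} for $\alpha \in \Delta_r^\circ$), so once we show $\mathrm{gr}$ is an affine space fibration the smoothness of $\Chtau$ follows immediately, as affine bundles preserve smoothness.

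For the affine-bundle claim, I would argue fibrewise over the target. Fix a point of $\prod_j \mathcal{C}h^{\alpha,ss}_{\underline n^j,\underline d^j}$, i.e.\ a tuple of $\alpha$-semistable chains $(G^j_\bullet)_{j=1,\dots,l}$ with strictly decreasing $\alpha$-slopes; the fibre of $\mathrm{gr}$ parametrises chains $F_\bullet$ together with a filtration whose $j$-th graded piece is $G^j_\bullet$. Such an $F_\bullet$ is built up by iterated extensions, and the extension data live in the $\Ext^1$-groups of the abelian category of chains between the semistable pieces. The key numerical input is that, because the $\alpha$-slopes are strictly decreasing and $\alpha \in \Delta_r^\circ$, the relevant $\Hom$-groups $\Hom(G^j_\bullet, G^{j'}_\bullet)$ for $j<j'$ vanish (semistable objects with $\mu_\alpha(G^j) > \mu_\alpha(G^{j'})$ have no nonzero maps), and the obstruction groups $\Ext^2$ vanish as well in this range — this is exactly what the cone condition $\alpha_i - \alpha_{i+1} > 2g-2$ buys us, via the hypercohomology computation of $\Ext$-groups of chains in \cite[Section 3]{ACGPS}. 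Given these vanishings, the space of filtered chains with prescribed graded pieces is an affine space (a torsor under the relevant $\Ext^1$), and this identification is compatible in families, giving the structure of an affine space fibration; I would also note it is Zariski-locally trivial since the $\Ext^1$-sheaves over the base are vector bundles (by constancy of dimension, again from the vanishing of $\Hom$ and $\Ext^2$) and the torsor splits locally.

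The main obstacle I anticipate is making the "iterated extension" picture precise at the level of stacks rather than just on points: one must show that the functor sending a family over a base $S$ to its relative $\alpha$-HN filtration is representable and that the resulting tower of extension problems is controlled by locally free $\Ext^1$-sheaves with vanishing $\Ext^0$ and $\Ext^2$ uniformly in $S$. Concretely this requires base-change statements for the relative $\Hom$ and $\Ext$ sheaves of chains, which follow from the deformation theory in \cite[Section 3]{ACGPS} together with cohomology-and-base-change once the vanishing of $\Ext^2$ (equivalently, smoothness of the ambient stack and of the strata) is in hand. Since all the needed ingredients — existence and uniqueness of relative HN filtrations, the hypercohomological description of $\Ext$-groups, and the smoothness of $\Chss$ for $\alpha \in \Delta_r^\circ$ — are available from \cite{GPHS} and \cite{ACGPS}, the proof is essentially a matter of assembling them, and indeed one can simply cite \cite[Lemma 4.6 and Proposition 4.8]{GPHS} for the full statement.
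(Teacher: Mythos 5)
The paper itself gives no argument for this proposition: it is quoted verbatim from [GPHS, Lemma 4.6 and Proposition 4.8], so your closing remark that one can simply cite that reference is exactly what the authors do. Your reconstruction, however, has a genuine gap at its central step. The Hom-vanishing you invoke, $\Hom(G^j_\bullet,G^{j'}_\bullet)=0$ for $j<j'$ (maps from higher to lower $\alpha$-slope), is true but is not what controls the fibres of $\mathrm{gr}$; it is what gives uniqueness of the HN filtration. The groups that govern the fibres are $\Hom(G^{j}_\bullet,G^{j'}_\bullet)$ with $j>j'$, i.e.\ maps from the lower-slope graded pieces to the higher-slope ones, and these do not vanish in general: already for graded pieces concentrated in a single index (so the cone condition plays no role for them), take $G^1=(0\to\cdots\to 0\to L)$ with $\deg L=1$ effective and $G^2=(0\to\cdots\to 0\to\mathcal{O}_C)$, where $\Hom(G^2,G^1)=H^0(L)\neq 0$. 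These Hom groups are precisely the automorphism groups of the objects of the fibre of $\mathrm{gr}$ (unipotent automorphisms of a filtered chain inducing the identity on the associated graded), so the fibre over a tuple $(G^j_\bullet)_j$ is not an affine space but an iterated quotient of the shape $[\Ext^1/\Hom]$ with trivial action, i.e.\ an affine space times classifying stacks of vector groups. For the same reason your ``constancy of dimension'' step fails: $\dim\Ext^1(G^j_\bullet,G^{j'}_\bullet)$ for $j>j'$ can jump (in the example above it jumps by $\dim H^0(L)$); what is locally constant, thanks to the $\Ext^2$-vanishing of [GPHS, Lemma 4.6] --- which is indeed where $\alpha\in\Delta_r^\circ$ enters --- is only the Euler characteristic $\dim\Hom-\dim\Ext^1$.

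Consequently the phrase ``affine space fibration'' has to be understood in the stacky sense in which it is established in the cited source: $\mathrm{gr}$ is an iterated torsor under vector bundle stacks (fibrewise, quotients of affine spaces by unipotent groups), not a representable Zariski-locally trivial fibration in affine spaces, and a correct proof along your lines must work with the two-term complexes computing $R\Hom$ of the graded pieces rather than with a literal $\Ext^1$-bundle; the motivic consequence drawn later in the paper then needs a corresponding (small) extra argument rather than plain $\AA^1$-invariance for affine bundles. Your smoothness conclusion survives, since $\mathrm{gr}$ is smooth with smooth target, but the step ``the fibre is a torsor under $\Ext^1$, which is a vector bundle over the base, hence the fibration is Zariski-locally trivial'' is the one that would fail as written.
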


To state the relationship between the fixed point set of the $\GG_m$-action on $\mH$ and moduli spaces of chains, we introduce a \emph{Higgs stability parameter} for length $r$ chains
\[ \alpha_H:= (r(2g-2), \dots ,2g-2,0 ) \]
which lies on the boundary of the above cone $\Delta_{r}$.

\begin{prop}\label{prop ss chain and Higgs}
Any length $r$ chain $F_\bullet$ determines a Higgs bundle $(E = \oplus_{i=0}^r F_i \otimes \omega_C^{\otimes - i}, \Phi)$, where the Higgs field $\Phi : E \ra E \otimes \omega_C$ is determined by the chain homomorphisms $\phi_i : F_{i-1} \ra F_i$. Furthermore, the associated Higgs bundle $(E,\Phi)$ is (semi)stable if and only if the chain $F_\bullet$ is $\alpha_H$-(semi)stable. 
\end{prop}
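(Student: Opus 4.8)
The plan is to construct the Higgs field explicitly, then reduce (semi)stability of $(E,\Phi)$ to testing against $\GG_m$-invariant Higgs subbundles, and finally identify those with subchains of $F_\bullet$ while comparing slopes.

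First I would build $\Phi$. Twisting $\phi_i\colon F_{i-1}\to F_i$ by $\omega_C^{\otimes-(i-1)}$ yields a map $F_{i-1}\otimes\omega_C^{\otimes-(i-1)}\to F_i\otimes\omega_C^{\otimes-(i-1)}=(F_i\otimes\omega_C^{\otimes-i})\otimes\omega_C$, and summing these over $i$ defines $\Phi\colon E\to E\otimes\omega_C$ for $E=\bigoplus_{i=0}^r F_i\otimes\omega_C^{\otimes-i}$. Writing $E_i:=F_i\otimes\omega_C^{\otimes-i}$, the grading $E=\bigoplus_i E_i$ equips $E$ with a $\GG_m$-action for which $\Phi$ has weight one, and this is exactly the construction underlying the fixed-point description recalled in $\S$\ref{sec Gm action}. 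A graded subbundle $E'=\bigoplus_i E'_i\subset E$ is $\Phi$-invariant precisely when, setting $F'_i:=E'_i\otimes\omega_C^{\otimes i}$, one has $\phi_{i+1}(F'_i)\subset F'_{i+1}$ for all $i$, i.e. when $F'_\bullet\subset F_\bullet$ is a subchain; up to saturation this gives a bijection between graded Higgs subbundles of $(E,\Phi)$ and subchains of $F_\bullet$.

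Next comes the slope bookkeeping. For such a pair $(E',F'_\bullet)$ one has $\rk E'=\sum_i\rk F'_i$ and $\deg E'=\sum_i\bigl(\deg F'_i-i(2g-2)\rk F'_i\bigr)$, so, since $(\alpha_H)_i=(r-i)(2g-2)$, a one-line computation gives $\mu_{\alpha_H}(F'_\bullet)=\mu(E')+r(2g-2)$, and the same identity with $F'_\bullet,E'$ replaced by $F_\bullet,E$. Hence $\mu(E')\,(\leq)\,\mu(E)$ if and only if $\mu_{\alpha_H}(F'_\bullet)\,(\leq)\,\mu_{\alpha_H}(F_\bullet)$, and the proposition reduces to the assertion that (semi)stability of $(E,\Phi)$ can be tested on graded Higgs subbundles alone.

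That assertion is the one step requiring a genuine argument. Given any proper Higgs subbundle $E'\subset E$, necessarily $\rk E'<\rk E$; the one-parameter subgroup $g_t\in\Aut(E)$ acting by $t^i$ on $E_i$ carries $E'$ to a Higgs subbundle $g_t(E')$ of the same rank and degree for each $t\in\GG_m$ (one checks $\Phi(g_t(E'))\subset g_t(E')\otimes\omega_C$ from the fact that $\Phi$ has weight one, since rescaling $\Phi$ does not affect which subbundles are invariant). By properness of the relevant Quot scheme the orbit $t\mapsto g_t(E')$ extends over $t=0$; the limit $E''$ is $\Phi$-invariant because that is a closed condition, is $\GG_m$-invariant as a fixed point of the induced action, and still has rank $<\rk E$. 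Its saturation is then a proper graded Higgs subbundle of slope $\geq\mu(E')$, so any Higgs subbundle violating the (semi)stability inequality for $(E,\Phi)$ produces a graded one that also violates it. This proves the assertion and hence the proposition; as in the text, semistability and geometric semistability coincide for both Higgs bundles and chains, so one may run the Quot-scheme argument over $\bar k$. This limiting argument, which goes back to Hitchin and Simpson, is the only non-formal ingredient; see also \cite[Section 9]{HT}.
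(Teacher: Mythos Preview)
Your proof is correct and follows the same line as the paper's: establish the bijection between graded Higgs subbundles of $(E,\Phi)$ and subchains of $F_\bullet$, and verify the slope identity $\mu_{\alpha_H}(F'_\bullet)=\mu(E')+r(2g-2)$. The paper's proof does exactly these two steps and nothing more, attributing the result to Hitchin and Simpson; the one genuine difference is that you have spelled out the reduction to \emph{graded} Higgs subbundles via the $\GG_m$-degeneration in the Quot scheme, whereas the paper leaves this step implicit in that attribution. Your argument for this reduction is sound (the key points being that $g_t\cdot\Phi=t\Phi$ so $\Phi$-invariance is preserved along the orbit, that $\Phi$-invariance is closed in Quot, and that the flat limit is $\GG_m$-fixed hence graded with the same rank and degree), so your write-up is a strictly more detailed version of the paper's proof rather than a different approach.
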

\begin{proof} 
  This is essentially due to Hitchin \cite{hitchin} and Simpson \cite{simpson}: one verifies that $E' = \oplus E_i'$ is a Higgs subbundle of $(E = \oplus E_i, \Phi)$ if and only if $F'_\bullet$ is a subchain of $F'_\bullet$, where $E_i':= F_i' \otimes \omega_C^{\otimes - i}$. Moreover, one has
  \[  \mu_{\alpha_H}(F'_\bullet)=  \frac{\sum_{i=0}^r \deg F'_i + (r-i)(2g-2) \rk(F'_i)}{\sum_{i=0}^r \rk F'_i }  = \frac{\sum_{i=0}^r \deg E'_i}{\sum_{i=0}^r \rk E'_i } +r(2g-2) = \mu(E') +r(2g-2)\] 
and so (semi)stability of $(E,\Phi)$ corresponds to $\alpha_H$-(semi)stability of $F_\bullet$.
\end{proof}

\begin{cor}\label{cor fixed locus modular}
The connected components of the fixed point set of the $\GG_m$-action on $\mH$ are moduli spaces of $\alpha_H$-semistable chains for numerical invariants $\underline{n}$ and $\underline{d}$ for which $\alpha_H$ is non-critical (and thus the notions of semistability and stability with respect to $\alpha_H$ coincide for these numerical invariants). In particular, the moduli spaces of $\alpha_H$-semistable chains appearing as fixed components are smooth projective varieties.
\end{cor}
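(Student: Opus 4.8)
The plan is to combine the weight-space description of $\GG_m$-fixed semistable Higgs bundles obtained at the end of $\S$\ref{sec Gm action} with Proposition~\ref{prop ss chain and Higgs}, and to add one new, purely numerical, ingredient: coprimality of $n$ and $d$ forces the chain invariants that occur on fixed components to be non-critical for $\alpha_H$.

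First I would fix a connected component $H_i$ of $(\mH)^{\GG_m}$; by Proposition~\ref{prop Higgs BB decomp} it is already known to be a smooth projective variety. Every point of $H_i$ is a semistable Higgs bundle $(E,\Phi)$ carrying a $\GG_m$-weight decomposition $E=\bigoplus_j E_j$ with $\Phi(E_j)\subset E_{j+1}\otimes\omega_C$, and the reindexing $F_i:=E_{i_0+i}\otimes\omega_C^{\otimes i}$ turns this into a length-$r$ chain $F_\bullet$; conversely Proposition~\ref{prop ss chain and Higgs} turns a chain back into a Higgs bundle, and matches Higgs-(semi)stability of $(E,\Phi)$ with $\alpha_H$-(semi)stability of $F_\bullet$. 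Since $H_i$ is connected, the invariants $\underline{n}=\underline{\rk}(F_\bullet)$, $\underline{d}=\underline{\deg}(F_\bullet)$ and the length $r$ are constant on $H_i$, and they satisfy $\sum_i n_i=n$ and $\sum_i (d_i-i(2g-2)n_i)=d$. I would then run these two mutually inverse constructions in families, using the GIT constructions of both moduli spaces (cf.\ \cite{simpson_IHES_I,simpson,schmitt_moduli}) and the standard observation that, the Harder--Narasimhan filtration being canonical and hence $\GG_m$-equivariant, semistability of a $\GG_m$-fixed Higgs bundle can be tested on its $\GG_m$-invariant --- that is, graded --- Higgs subbundles, to conclude that $H_i$ is isomorphic to the moduli space $\ch^{\alpha_H,ss}_{\underline{n},\underline{d}}$ of $\alpha_H$-semistable chains. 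This argument in families, essentially contained in \cite[\S 9]{HT}, is the step requiring the most care.

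Next I would check that $\alpha_H$ is non-critical for any such $(\underline{n},\underline{d})$. Suppose not: there are $\underline{0}<\underline{n}'<\underline{n}$ and $\underline{d}'$ with $\mu_{\alpha_H}(\underline{n}',\underline{d}')=\mu_{\alpha_H}(\underline{n},\underline{d})$. By the slope computation in the proof of Proposition~\ref{prop ss chain and Higgs} the right-hand side equals $d/n+r(2g-2)$, and expanding the left-hand side (with $n':=\sum_i n_i'$) gives $\sum_i d_i'-\sum_i i(2g-2)n_i'=n'd/n$. The left-hand side is an integer, so $n\mid n'd$; since $\gcd(n,d)=1$ this forces $n\mid n'$, contradicting $0<n'<n$. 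Hence $\alpha_H$ is non-critical for $(\underline{n},\underline{d})$, and in particular $\alpha_H$-semistability and $\alpha_H$-stability coincide for these invariants.

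The last sentence of the statement is then immediate: the fixed components are smooth and projective by Proposition~\ref{prop Higgs BB decomp} (alternatively, projectivity holds for the GIT construction of $\ch^{\alpha_H,ss}_{\underline{n},\underline{d}}$, and smoothness follows from $\alpha_H\in\Delta_r$ together with the non-criticality just established, by \cite[Theorem~3.8~vi)]{ACGPS}). The main obstacle is the second paragraph: promoting the pointwise bijection between $\GG_m$-fixed semistable Higgs bundles and $\alpha_H$-semistable chains to an isomorphism of moduli spaces, including the reduction of the semistability test to graded subobjects.
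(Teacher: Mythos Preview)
Your non-criticality argument is correct and is essentially identical to the paper's: both reduce the equality $\mu_{\alpha_H}(\underline{n}',\underline{d}')=\mu_{\alpha_H}(\underline{n},\underline{d})$ to an ordinary slope equality via the computation in Proposition~\ref{prop ss chain and Higgs}, and then invoke $\gcd(n,d)=1$.

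There is, however, a genuine gap in your identification $H_i\cong \ch^{\alpha_H,ss}_{\underline{n},\underline{d}}$. Running the two mutually inverse constructions in families establishes an isomorphism between $\ch^{\alpha_H,ss}_{\underline{n},\underline{d}}$ and the \emph{union} of all fixed components whose associated chain invariants equal $(\underline{n},\underline{d})$; it does not by itself tell you that this union is a single $H_i$. Equivalently, you have not argued that $\ch^{\alpha_H,ss}_{\underline{n},\underline{d}}$ is connected, and without this it could split across several connected components of $(\mH)^{\GG_m}$. You flagged the ``families'' step as the main obstacle, but that step only upgrades a bijection to a morphism; the missing ingredient is connectedness, which neither the pointwise bijection nor its family version supplies.

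The paper fills this gap as follows: once non-criticality is established, one perturbs $\alpha_H$ to a nearby non-critical $\widetilde{\alpha}_H\in\Delta_r^\circ$ defining the same (semi)stability, and then invokes \cite[Theorem~2]{heinloth_int_form}, which gives connectedness of the moduli stack of $\widetilde{\alpha}_H$-semistable chains with those invariants. This yields connectedness of $\ch^{\alpha_H,ss}_{\underline{n},\underline{d}}$, hence the identification with a single connected component of the fixed locus. Note also the logical order: the paper proves non-criticality \emph{first} and uses it to access the perturbation into $\Delta_r^\circ$ needed for the connectedness citation, whereas you derive non-criticality only after claiming the identification.
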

\begin{proof}
The fact that $\alpha_H$ is non-critical for the numerical invariants $\underline{n}$ and $\underline{d}$ arising in this decomposition is a consequence of $n$ and $d$ being coprime, as $\mu_{\alpha_H} (\underline{n}', \underline{d}') = \mu_{\alpha_H} (\underline{n}, \underline{d})$ if and only if $\mu(\sum_i n_i', \sum d'_i -i(2g-2) ) = \mu(n,d)$ by the proof of Proposition \ref{prop ss chain and Higgs}. Since $\alpha_H$ is non-critical, one can find a small perturbation to a non-critical stability parameter $\widetilde{\alpha}_H \in \Delta^\circ_r$ such that $\alpha_H$-semistability coincides with $\widetilde{\alpha}_H$-semistability; then by \cite[Theorem 2]{heinloth_int_form} the moduli stack of $\widetilde{\alpha}_H$-semistable chains with these invariants is connected and so it follows that the moduli space of $\alpha_H$-semistable chains with these invariants is also connected, hence a connected component of the $\GG_{m}$-fixed point locus.
\end{proof}

\subsection{Variation of stability and Harder--Narasimhan stratification results}

By varying the parameter $\alpha \in \RR^{r+1}$, one obtains different notions of (semi)stability and correspondingly different moduli spaces of semistable chains which are related by birational transformations. One can subdivide the space of stability parameters $\RR^{n+1}$ into locally closed subsets, known as a wall and chamber decomposition, such that the notion of $\alpha$-(semi)stability is constant within each chamber and changes as one crosses a wall. In fact, the walls are hyperplanes which are in bijections with invariants $(\underline{0} <\underline{n}' < \underline{n},\underline{d}')$ which witness the criticality of a critical stability parameter in the sense of Definition \ref{def slopes chains}. This wall-crossing picture for chains was studied in \cite[Section 4]{ACGPS} and was described from the point of view of stacks in \cite[Section 3]{GPH}.

Let us recall the description of the stacky wall-crossing given in \cite[Proposition 2]{GPH}: let $\alpha_0$ be a critical stability parameter for invariants $\underline{n}$ and $\underline{d}$, for $\delta \in \RR^{r+1}$ consider the family of stability parameters $\alpha_t = \alpha_0 + t \delta$ in a neighbourhood of $t = 0 \in \RR$. Then for $0 <\epsilon < \!< 1$, we have that $\alpha_\epsilon$-(semi)stability and $\alpha_{-\epsilon}$-semistability are independent of $\epsilon$, and if we write $\alpha_{\pm}:= \alpha_{\pm \epsilon}$, then we have
\begin{equation}\label{wall-crossing}
\Ch^{\alpha_0,ss} = \Ch^{\alpha_{\pm},ss} \sqcup \bigsqcup_{\tau \in I_{\pm}} \Ch^{\alpha_{\pm},\tau}, 
\end{equation}  
where $I_{\pm}$ are finite sets of $\alpha_{\pm}$-HN types.

For the wall-crossing arguments we employ later, we will need to introduce the stack of generically surjective chains.

\begin{defn}
Let $\Chgs$ denote the substack of $\Ch$ consisting of chains 
\[ F_\bullet = (F_0 \stackrel{\phi_1}{\ra} F_{1} \ra \cdots \ra F_{r-1} \stackrel{\phi_r}{\ra} F_r)\]
such that all the homomorphisms $\phi_i$ are generically surjective. 
\end{defn}

\begin{rmk}
For $\underline{n} = (n, \dots, n)$ constant, $\Chgs$ is smooth and connected by \cite[Lemma 4.9]{GPHS} (see also \cite[Theorem 3.8 v)]{ACGPS}). In this constant rank case, as we are working over a curve, the stack of generically surjective chains coincides with the stack of injective chains (see \cite{Heinloth_LaumonBDay}).
\end{rmk}

\begin{prop}\label{prop WC}
Let $\alpha \in \Delta_r$ and let $\underline{n}$ and $\underline{d}$ be invariants for chains of length $r$. Then there is a ray $(\alpha_t)_{t \geq 0} \in \Delta_r$ starting at $\alpha_0 = \alpha$ with the following properties.
\begin{enumerate}
\item \cite[Lemma 6]{GPH} If $n_i \neq n_j$ for some $i \neq j$, then for $t > \!> 0$ there are no $\alpha_t$-semistable chains with these invariants; that is, $\Ch^{\alpha_t,ss}  = \emptyset$ for $t > \!> 0$.
\item \cite[Corollary 6.10]{GPHS}  If $n_i = n_j$ for all $i, j$, then for $t > \!> 0$ we have 
\[ \Ch^{\alpha_t,ss} \subset \Chgs \]
and moreover $\Chgs$ is an (infinite) union of $\alpha_t$-HN strata.
\end{enumerate}
If moreover $\alpha \in \Delta_r^\circ$, then the ray  $(\alpha_t)_{t \geq 0} $ can be chosen to remain in $\Delta_r^\circ$.
\end{prop}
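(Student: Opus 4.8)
The plan is to deduce the proposition by combining the two cited results — \cite[Lemma 6]{GPH} for part (1) and \cite[Corollary 6.10]{GPHS} for part (2) — and to read off the $\Delta_r^\circ$ refinement from the shape of the rays they produce. As preparation I would recall that the space $\RR^{r+1}$ of stability parameters carries a locally finite wall-and-chamber decomposition, the walls being the hyperplanes attached to the sub-rank-vectors $\underline{0}<\underline{n}'<\underline{n}$ witnessing criticality, so that $\alpha$-(semi)stability and the finite set of $\alpha$-HN types with fixed invariants $\underline{n},\underline{d}$ are constant on each open chamber; and that, by the identity
\[
\mu_{\alpha}(\underline{n}',\underline{d}') - \mu_{\alpha}(\underline{n},\underline{d}) = \bigl(\mu(\underline{n}',\underline{d}')-\mu(\underline{n},\underline{d})\bigr) + \Bigl(\tfrac{\sum_i \alpha_i n'_i}{\sum_i n'_i}-\tfrac{\sum_i \alpha_i n_i}{\sum_i n_i}\Bigr),
\]
along a ray $\alpha_t = \alpha + t\delta$ the comparison of the $\alpha_t$-slope of a subchain (or quotient chain) with that of the ambient chain is an affine function of $t$ whose leading coefficient depends only on $\delta$ and the two rank vectors.

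The next step is to fix the ray. Both references produce, for the given $\alpha\in\Delta_r$ and invariants $\underline{n},\underline{d}$, a ray $\alpha_t=\alpha+t\delta$ whose direction $\delta$ lies in the recession cone of $\Delta_r$, i.e. has non-increasing coordinates $\delta_0\ge\delta_1\ge\cdots\ge\delta_r$ (one may take, for instance, a direction with strictly decreasing coordinates such as a positive multiple of $(r,r-1,\dots,1,0)$, perturbed if needed so that the tail of the ray lies in a single open chamber). For such a direction, for every $0\le i<r$ and every $t\ge 0$,
\[
\alpha_{t,i}-\alpha_{t,i+1}=(\alpha_i-\alpha_{i+1})+t(\delta_i-\delta_{i+1})\ \ge\ \alpha_i-\alpha_{i+1}\ \ge\ 2g-2,
\]
so $\alpha_t\in\Delta_r$ for all $t\ge 0$; and if moreover $\alpha\in\Delta_r^\circ$, then $\alpha_i-\alpha_{i+1}>2g-2$ already, so the displayed inequalities give $\alpha_{t,i}-\alpha_{t,i+1}>2g-2$ for all $t\ge0$, and the ray stays in $\Delta_r^\circ$. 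This settles all the statements about the cones, so it remains to recall why such a ray has properties (1) and (2).

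For (1), when $\underline{n}$ is not constant, \cite[Lemma 6]{GPH} shows that every chain with invariants $\underline{n},\underline{d}$ becomes $\alpha_t$-unstable for $t$ large: since $\delta$ is non-increasing while $\underline{n}$ is not, one attaches to each such chain a canonical sub- or quotient-chain — built from the bundles in the low part of the chain and their images, or from a truncation at a position where the ranks jump — whose leading slope coefficient in $t$ strictly beats that of the ambient chain and hence eventually breaks the slope inequality; as this destabilisation is uniform, $\Ch^{\alpha_t,ss}=\emptyset$ for $t\gg 0$. For (2), when $\underline{n}$ is constant, \cite[Corollary 6.10]{GPHS} shows that along such a ray every $\alpha_t$-semistable chain is generically surjective for $t$ large — a chain with a non-generically-surjective map has a subchain with a rank drop which, by the same leading-coefficient computation, destabilises it for $t\gg 0$ — and that $\Chgs$ is an (infinite) union of $\alpha_t$-HN strata, since for such $t$ generic surjectivity depends only on the $\alpha_t$-HN type. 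I expect the only genuine work beyond invoking the references to be the above cone bookkeeping together with checking that the directions used in \cite{GPH,GPHS} can be taken in the recession cone of $\Delta_r$ without disturbing the wall-crossing conclusions — which is consistent with those papers already working inside $\Delta_r$; alternatively, one re-runs their slope estimates with the explicit direction $\delta=(r,r-1,\dots,1,0)$, and the main obstacle then becomes the uniform destabilisation statement underlying part (1).
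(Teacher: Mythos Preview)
Your proposal is correct and in line with the paper's approach: the paper does not give a separate proof of this proposition at all, treating it as a direct consequence of the cited results \cite[Lemma 6]{GPH} and \cite[Corollary 6.10]{GPHS}, with only the final $\Delta_r^\circ$-refinement left implicit. Your write-up supplies exactly the missing bookkeeping the paper elides --- namely, that choosing the direction $\delta$ in the recession cone of $\Delta_r$ (non-increasing coordinates) keeps the ray inside $\Delta_r$, and preserves the strict inequalities if one starts in $\Delta_r^\circ$ --- so there is nothing to add.
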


\begin{rmk}\label{rmk pert path}
As observed in the proof of \cite[Proposition 2.6]{heinloth_int_form}, the above paths can be perturbed such that every critical value along the perturbed path lies on a single wall and the path is linear in a neighbourhood of each critical value, as the wall and chamber decomposition is a locally finite partition of $\Delta_r^\circ$ by \cite[$\S$2.4]{ACGPS}.
\end{rmk}

\section{Voevodsky's category of effective motives and motives of stacks}

\subsection{Motives of schemes}

In this section, let us briefly recall some basic properties about Voevodsky's category $\DM^{\eff}(k,R):=\DM^{\Nis,\eff}(k,R)$ of effective (Nisnevich) motives over $k$ with coefficients in a ring $R$. For the remainder of the paper, we fix such a ring $R$ and we always assume that the exponential characteristic of $k$ is invertible in $R$ (this is necessary for the more subtle properties of $\DM(k,R)$, such as the existence of the weight structure used in Section \ref{subsec purity}). In places, in particular for our main result (Theorem \ref{main_thm_intro}), we need the stronger assumption that $R=\QQ$, which we always point out explicitly.

The category $\DM^{\eff}(k,R)$ is a $R$-linear tensor triangulated category, which was originally constructed in \cite{VSF} and its deeper properties were established under the hypothesis that $k$ is perfect and satisfies resolution of singularities. These properties were extended to the case where $k$ is perfect and the exponential characteristic of $k$ is invertible in $R$ by Kelly in \cite{kelly}, using Gabber's refinement of de Jong's results on alterations.

For a separated scheme $X$ of finite type over $k$, one can associate a motive $M(X)\in \DM^{\eff}(k,R)$ which is covariantly functorial in $X$ and behaves like a homology theory. The unit for the monoidal structure is $M(\Spec k):=R\{0\}$, and there are Tate motives $R\{n\}:=R(n)[2n] \in \DM(k,R)$ for all $n\in\NN$. For any motive $M$ and $n \in \NN$, we write $M\{n\}:=M\otimes R\{n\}$. 

In $\DM^{\eff}(k,R)$, there are K\"{u}nneth isomorphisms, $\AA^1$-homotopy invariance, Gysin distinguished triangles, projective bundle formulae and Poincar\'{e} duality isomorphisms, as well as realisation functors (Betti, de Rham, $\ell$-adic,$\cdots$) and descriptions of Chow groups with coefficients in $R$ as homomorphism groups in $\DM^{\eff}(k,R)$. There is also a well-behaved subcategory $\DM^{\eff}_{c}(k,R)\subset \DM^{\eff}(k,R)$ which can be described equivalently as the subcategory of compact objects in the triangulated sense or as the thick triangulated subcategory generated by $M(X)$ for $X$ smooth.

By Voevodsky's cancellation theorem \cite{Voevodsky-canc} (together with a result of Suslin to cover the case where $k$ is imperfect \cite{Suslin-imperfect}), the category $\DM^{\eff}(k,R)$ embeds as a full subcategory of the larger triangulated category of (non-effective) motives $\DM(k,R)$. For an overview of properties of $\DM(k,R)$, which by the previous embedding also covers the main properties of $\DM^{\eff}(k,R)$ alluded to in the previous paragraph, we refer the reader to the summary in \cite[$\S$2]{HPL}.

\subsection{Motives of smooth exhaustive stacks} \label{exhaustive}

There are several approaches to defining motives of general algebraic stacks when working with rational coefficients. For a comparison of different approaches, see \cite[Appendix A]{HPL}, and for a more general approach which partially describes a six operation formalism on stacks via the natural $\infty$-category structures on rational motives, see \cite{RS}. A key point in the story is that Voevodsky motives with rational coefficients satisfy smooth cohomological descent, which ensures that the resulting motives do not depend on auxiliary choices and relate as expected to motives of finite type schemes. However, it is not clear how to construct motives of algebraic stacks with coefficients in a more general ring $R$.

In this paper, as in \cite{HPL}, we can work with a more restrictive class of algebraic stacks, namely smooth exhaustive stacks. Informally, an algebraic stack $\fX$ is exhaustive if it can be well approximated by a sequence of schemes which occur as open substacks of vector bundles over increasingly large open substacks of $\fX$ (see \cite[Definition 2.15]{HPL} for the precise definition, which we will not need in this paper). In particular, quotient stacks $[X/G]$ for a $G$-linearised action of an affine algebraic group $G$ on a smooth quasi-projective variety $X$ are smooth and exhaustive, see \cite[Lemma 2.16]{HPL}. Almost all of the stacks we consider in this paper, and all of those for which we consider an associated motive, are exhaustive, see Proposition \ref{prop gen surj} and Lemma \ref{lemma HN strata motive}.

For such smooth exhaustive stacks and for any coefficient ring $R$, we can attach a motive in $\DM^{\eff}(k,R)$ by adapting ideas of Totaro and Morel-Voevodsky (see \cite[Definition 2.17]{HPL}; note that in loc.cit. we work in $\DM(k,R)$ but the definition works as well and compatibly in the full subcategory $\DM^{\eff}(k,R)$). In \cite[Appendix A]{HPL}, we checked that this is compatible with the more general definition alluded to above when $R$ is a $\QQ$-algebra. We do not need to go into the definition, but we will need the following property which follows immediately from the definition.

\begin{prop}\label{prop:exhaustive}
  Let $\cT$ be a smooth exhaustive algebraic stack. Then there exists a diagram of finite type separated $k$-schemes
  \[
U_{0}\to U_{1}\to U_{2}\to\ldots
\]
which lives over $\cT$ and such that we have
\[
M(\cT)\simeq \hocolim_{n\in\NN} M(U_{n})
\]
in $\DM^{\eff}(k,R)$. Let $f:\cT'\to \cT$ a flat finite type separated representable morphism, and $U'_{n}:=U_{n}\times_{\cT}\cT'$ for $n\in \NN$ (so that $U'_{n}$ is also finite type and separated). Then $\cT'$ is exhaustive, and we have
\[
M(\cT')\simeq \hocolim_{n\in\NN} M(U'_{n})
\]
in $\DM^{\eff}(k,R)$.
\end{prop}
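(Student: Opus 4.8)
The first assertion is little more than an unwinding of the definition of the motive of a smooth exhaustive stack from \cite[Definitions 2.15 and 2.17]{HPL}. Recall that the exhaustive structure on $\cT$ provides a filtration $\cT=\bigcup_{m}\cT_{m}$ by finite type open substacks and, for each $m$, a sequence of vector bundles $E_{m,j}\to\cT_{m}$ with open substacks $U_{m,j}\subset E_{m,j}$ which are schemes, the codimension of the complements $E_{m,j}\setminus U_{m,j}$ tending to infinity with $j$, and $M(\cT)$ is defined as the homotopy colimit of the $M(U_{m,j})$ over this system. The plan for the first part is therefore simply to extract a cofinal sequence $U_{n}:=U_{m(n),j(n)}$: each $U_{n}$ is a separated finite type $k$-scheme, it lives over $\cT$ via $U_{n}\to E_{m(n),j(n)}\to\cT_{m(n)}\hookrightarrow\cT$ compatibly with the transition maps, and $M(\cT)\simeq\hocolim_{n}M(U_{n})$ since a cofinal subsystem computes the same homotopy colimit.

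For the second assertion, the plan is to \emph{pull the whole exhaustive structure back along $f$} and verify that it is again an exhaustive structure, now on $\cT'$. Concretely, one sets $\cT'_{m}:=f^{-1}(\cT_{m})$, $E'_{m,j}:=E_{m,j}\times_{\cT_{m}}\cT'_{m}$ and $U'_{m,j}:=U_{m,j}\times_{\cT}\cT'$. Because $f$ is of finite type and $\cT$ is locally of finite type over $k$, the $\cT'_{m}$ form a filtration of $\cT'$ by finite type open substacks; because vector bundles and open immersions are stable under base change, each $E'_{m,j}\to\cT'_{m}$ is again a vector bundle and each $U'_{m,j}\subset E'_{m,j}$ is again open; and because $f$ is representable and separated, each $U'_{m,j}$ is again a separated finite type $k$-scheme. (Should one only assume $f$ representable by algebraic spaces, one works throughout with finite type separated algebraic spaces, whose Voevodsky motives behave equally well, e.g. by cdh descent.) The transition data between successive approximations and their compatibility with the bundle projections are likewise obtained by base change, so the only substantive axiom left to check is that $\codim(E'_{m,j}\setminus U'_{m,j})\to\infty$ with $j$.

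This is the one place where flatness enters. The projection $E'_{m,j}\to E_{m,j}$ is a base change of $f$, hence flat, and $E'_{m,j}\setminus U'_{m,j}$ is its preimage of the closed substack $E_{m,j}\setminus U_{m,j}$. By the dimension formula for flat morphisms --- which I would reduce to the case of schemes by passing to smooth atlases --- flat pullback does not decrease the codimension of a closed substack, so $\codim(E'_{m,j}\setminus U'_{m,j})\geq\codim(E_{m,j}\setminus U_{m,j})\to\infty$ (and the same argument handles whatever codimension growth is demanded of $\cT\setminus\cT_{m}$). Hence $(\cT'_{m},E'_{m,j},U'_{m,j})$ is an exhaustive structure on $\cT'$, so $\cT'$ is exhaustive; and $M(\cT')$ is by definition the homotopy colimit of the $M(U'_{m,j})$, so extracting the same cofinal sequence as before yields $M(\cT')\simeq\hocolim_{n}M(U'_{n})$ with $U'_{n}=U_{n}\times_{\cT}\cT'$. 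The one external input I would cite rather than reprove is that $M$ of a smooth exhaustive stack is independent of the chosen exhaustive structure \cite[\S 2]{HPL}, so that this computation agrees with the $M(\cT')$ used elsewhere in the paper.

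The main obstacle is essentially bookkeeping against the precise list of axioms in \cite[Definition 2.15]{HPL} --- in particular checking that the transition data relating consecutive approximations $U_{m,j}$ and $U_{m,j+1}$, and its interplay with the vector bundle projections, really does pull back cleanly --- together with the standard but, here, stacky fact that a flat morphism does not drop the codimension of a closed substack. Neither point is deep, but the latter is exactly what guarantees that the pulled-back open approximations $U'_{m,j}$ are still large enough inside $E'_{m,j}$ for their homotopy colimit to compute $M(\cT')$.
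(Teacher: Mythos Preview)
Your proposal is correct and takes essentially the same approach as the paper, which simply asserts that the proposition ``follows immediately from the definition'' in \cite[Definition 2.17]{HPL} without further argument. You have spelled out in detail what that unwinding entails --- extracting a cofinal $\NN$-indexed sequence from the exhaustive structure, pulling back the exhaustive data along $f$, and using flatness to preserve the codimension estimates --- all of which is implicit in the paper's one-line justification.
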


In \cite{HPL}, we established a few of the expected properties of motives of smooth exhaustive stacks, in particular we prove that the product of exhaustive stacks is exhaustive and establish K\"unneth isomorphisms, $\AA^{1}$-homotopy invariance and Gysin triangles in \cite[Proposition 2.27]{HPL}. Note that the results are stated in the larger category $\DM(k,R)$ but also hold in the full subcategory $\DM^{\eff}(k,R)$ by the Voevodsky-Suslin embedding.

\section{Motivic non-abelian Hodge correspondence}\label{sec motivic_NAH}

One of the most interesting features of the moduli space $\mH$ is its role in the non-abelian Hodge correspondence of Corlette and Simpson, which relates $\mH$ to a moduli space $\mdR$ of (logarithmic) flat connections on $C$ over the complex numbers \cite{simpson_NAHT}. Over a field $k$ of characteristic zero, there is still a geometric relationship between $\mH$ and $\mdR$, instantiated by Deligne's moduli space $\mHdg$ of (logarithmic) $\lambda$-connections. In this section, which is independent of the rest of the paper, we combine this with Appendix \ref{sec:motiv-equiv-semipr} to compare the motives of $\mH$, $\mdR$ and $\mHdg$. 

In this section, we assume that $k$ is a field of characteristic zero and, as in the rest of the paper, we assume that $n$ and $d$ are coprime and that $C(k)\neq \emptyset$. We fix $x \in C(k)$ and consider logarithmic connections with poles at $x$ of fixed residue, whose definition we recall below.

In \cite{simpson_IHES_I}, Simpson defines the notion of a sheaf $\Lambda$ of rings of differential operators over a smooth projective variety $X/S$ generalising the usual ring of differential operators $\cD_{X/S}$ and constructs moduli spaces of $\Lambda$-modules (i.e. sheaves of left $\Lambda$-modules which are coherent as $\cO_{X}$-modules) which are semistable (in the usual sense of verifying an inequality of slopes for all $\Lambda$-submodules). For special choices of $\Lambda$, one obtains moduli spaces of coherent sheaves, Higgs bundles, flat connections and more generally logarithmic connections and their degenerations given by $\lambda$-connections.

\begin{defn}
Let $\Lambda^{\mathrm{dR},\log x}$ denote the (split, almost polynomial) sheaf of differential operators over $C$ associated to the sheaf of logarithmic differentials $\Omega_C(\log x)$ and the ordinary differential (see \cite[page 87 and Theorem 2.11]{simpson_IHES_I}); then a $\Lambda^{\mathrm{dR},\log(x)}$-module is a coherent sheaf $E$ on $C$ with logarithmic connection $\nabla : E \ra E \otimes \Omega_C(\log x)$ with poles at $x$ satisfying the usual Leibniz condition (as we are over a curve, the integrability condition holds trivially). Following page 86 of \textit{loc.\ cit.}, we can construct a deformation to the associated graded $\mathrm{Gr}(\Lambda^{\mathrm{dR},\log x}) = \Sym^*(\Omega_C(\log x)^\vee)$ which is a (split, almost polynomial) sheaf of split differential operators over $C \times \AA^1$ denoted $\Lambda^{\mathrm{dR},\log x,R}$. For $\lambda \in k$, let $i_\lambda : C \times \{ \lambda \} \hookrightarrow C \times \AA^1$ denote the inclusion and let $\Lambda^{\mathrm{dR},\log x,\lambda}:= i_\lambda^*\Lambda^{\mathrm{dR},\log x,R}$; then a $\Lambda^{\mathrm{dR},\log x,\lambda}$-module is a logarithmic $\lambda$-connection on $C$ with poles at $x$. 
\end{defn}

A logarithmic $\lambda$-connection $(E,\nabla)$ admits a residue at $x$, which is an endomorphism of $E_{x}$. Fixing the value of the residue determines a closed condition in moduli. We will be interested in the case when the residue is $-\lambda\frac{d}{n} \mathrm{Id}$. 

When $\lambda$ is non-zero, it is possible to rescale a (logarithmic) $\lambda$-connection (with pole at $x$ of residue $-\lambda\frac{d}{n} \mathrm{Id}$) into an ordinary (logarithmic) connection using the natural scaling action of $\GG_m$ on $\AA^1$, while when $\lambda=0$, the residue being zero implies that we do not have a non-trivial pole and we obtain an ordinary Higgs bundle on $C$.

A special case of Simpson's general construction in \cite{simpson_IHES_I} yields a quasi-projective coarse moduli space $\mHdg$ of semistable rank $n$ degree $d$ logarithmic $\lambda$-connections with poles at $x$ of residue $-\lambda\frac{d}{n} \mathrm{Id}$ for varying $\lambda\in k$, together with a flat morphism $f:\mHdg\to \mathbb{A}^{1}$ such that $f^{-1}(0)\simeq \mH$ is the moduli space of semistable Higgs bundles, $f^{-1}(1)\simeq \mH = \mdR$ is the moduli space of semistable logarithmic connections with pole at $x$ of residue $-\frac{d}{n} \mathrm{Id}$ and $f^{-1}(\GG_m)\simeq \mdR\times \GG_m$ (by the rescaling argument above). Since $n$ and $d$ are coprime, the moduli space $\mHdg$ and morphism $f:\mHdg\to \mathbb{A}^{1}$ are both smooth. Simpson shows that the scaling action on $\mH$ extends to $\mHdg$, see \cite{simpson_IHES_I,simpson_NAHT}. Moreover, he shows that the variety $\mHdg$ together with this $\GG_m$-action is still semi-projective in the sense of Definition \ref{def semiproj Gm} (see \cite[Lemma 16]{simpson_NAHT} for the case of $\lambda$-connections of degree $0$).

It is known that, when $k=\CC$, the family $f$ is topologically trivial, so that the fibre inclusions $\mH\hookrightarrow \mHdg$ and $\mdR\hookrightarrow \mHdg$ induce isomorphisms on singular cohomology; see \cite[Proposition 15]{simpson_NAHT} and \cite[Lemma 6.1]{HT}. In the Grothendieck ring of varieties, the motivic classes of the corresponding stacks (in degree $0$, not in the coprime case) in characteristic zero are proven to be equal in \cite[Theorem 1.2.1]{FSS}.

If we believe in the conservativity conjecture of realisations with rational coefficients, the topological triviality suggests that at least the motives with rational coefficients are the same. In fact, we prove that this holds integrally.

\begin{thm}\label{thm:motivic-hodge}
Let $k$ be a field of characteristic $0$, $C/k$ be a smooth projective geometrically connected curve with $C(k)\neq \emptyset$ and let $n$ and $d$ be coprime integers. Let $R$ be a ring such that the exponential characteristic of $k$ is invertible in $R$. Then the fibre inclusions in the Deligne-Simpson family induce isomorphisms
  \[
M(\mH)\simeq M(\mHdg)\simeq M(\mdR)
  \]
  in $\DM^{\eff}(k,R)$. 
\end{thm}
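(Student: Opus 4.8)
The plan is to deduce Theorem \ref{thm:motivic-hodge} from the motivic behaviour of semi-projective $\GG_m$-actions under equivariant specialisation, applied to the Deligne--Simpson family $f : \mHdg \to \AA^1$. The key geometric inputs are already assembled in the excerpt: $\mHdg$ is smooth, $f$ is smooth, the scaling action of $\GG_m$ on $\AA^1$ lifts to a $\GG_m$-action on $\mHdg$ compatible with $f$, this action is semi-projective in the sense of Definition \ref{def semiproj Gm}, and the fibres satisfy $f^{-1}(0) \simeq \mH$ and $f^{-1}(\lambda) \simeq \mdR$ for all $\lambda \in \GG_m$ (via the rescaling argument). First I would invoke Theorem \ref{thm_bb_mot} from the appendix: for a smooth quasi-projective variety with a semi-projective $\GG_m$-action, the Bia{\l}ynicki--Birula decomposition yields a motivic decomposition into Tate twists of the motives of the (smooth projective) fixed components. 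Applying this fibrewise is not quite enough; instead I would use the relative/equivariant statement Theorem \ref{thm:semiproj-spe} on equivariant specialisations of semi-projective $\GG_m$-actions, which compares the motive of a general fibre with that of the special fibre $f^{-1}(0)$ directly.

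Concretely, the second step is to observe that the total space $\mHdg$, with its lifted $\GG_m$-action, is itself semi-projective over $\AA^1$: the fixed locus $(\mHdg)^{\GG_m}$ is proper (it maps to the origin $0 \in \AA^1$, since $\GG_m$ acts with positive weight on the $\AA^1$-coordinate, forcing $\lambda = 0$ on any fixed point) and every point flows to a fixed point as $t \to 0$. This is the hypothesis needed to run the argument of Theorem \ref{thm:semiproj-spe}, whose output is an isomorphism $M(f^{-1}(\lambda)) \simeq M(f^{-1}(0))$ for $\lambda \neq 0$, induced by the inclusion of fibres, together with $M(\mHdg) \simeq M(f^{-1}(0))$ since the affine-bundle retraction of the BB decomposition of $\mHdg$ onto its fixed locus (which sits inside $f^{-1}(0)$) realises $\mHdg$ as $\AA^1$-homotopy equivalent to a space whose motive is that of $\mH$. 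This gives both $M(\mH) \simeq M(\mHdg)$ (the fixed locus of $\mHdg$ coincides with the fixed locus of $\mH$, and the BB strata retract onto it) and $M(\mdR) \simeq M(\mHdg)$ (apply the specialisation isomorphism at $\lambda = 1$), chaining to the desired $M(\mH) \simeq M(\mHdg) \simeq M(\mdR)$ in $\DM^{\eff}(k,R)$.

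For the statement about Chow rings in Theorem \ref{thm:motivic-hodge-intro} (the version stated in the introduction over a general commutative ring $R$), I would note that the isomorphism $M(\mH) \simeq M(\mdR)$ is induced by an actual algebraic correspondence — namely the closed immersions $\mH \hookrightarrow \mHdg \hookleftarrow \mdR$ composed with the retraction — and both varieties are smooth, so Poincar\'e duality in $\DM^{\eff}(k,R)$ identifies $\CH^*(\mH,R) = \Hom_{\DM}(M(\mH), R(*)[2*])$ with $\CH^*(\mdR,R)$ functorially, respecting the intersection product since the isomorphism is realised by composition of graph correspondences. One must check this is a ring map, which follows because the specialisation map in $\DM$ is a morphism of commutative monoids (the diagonal of $\mHdg$ specialises compatibly), hence induces a ring isomorphism on $\CH^*$.

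The main obstacle is establishing Theorem \ref{thm:semiproj-spe}, the equivariant specialisation statement, in a form strong enough that the comparison isomorphism between general and special fibre is canonical and compatible with the ring structure. The subtlety is that a localisation triangle in $\DM^{\eff}(k,R)$ does not split in general, so one cannot naively run the cut-and-paste argument valid in the Grothendieck ring; the semi-projectivity hypothesis is exactly what is needed to force the relevant Gysin triangles to split (as in the absolute BB case of Theorem \ref{thm_bb_mot}), but doing this \emph{relatively} over $\AA^1$ and tracking functoriality under the fibre inclusions requires care — in particular one needs the affine-bundle structure of the BB strata to be $\AA^1$-equivariant over the base, and one needs to identify the fixed locus of the total-space action with that of the special fibre, which is where the positivity of the $\GG_m$-weight on the $\AA^1$-coordinate is used crucially.
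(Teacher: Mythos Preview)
Your proposal is correct and follows essentially the same approach as the paper: the proof of Theorem \ref{thm:motivic-hodge} in the paper is a one-line application of Theorem \ref{thm:semiproj-spe} to the Deligne--Simpson family $f:\mHdg\to\AA^{1}$, using precisely the geometric inputs (smoothness of $f$, semi-projectivity of the lifted $\GG_{m}$-action, identification of the fibres) that you list. Your additional paragraphs about the internal mechanism of Theorem \ref{thm:semiproj-spe} and the Chow-ring compatibility belong respectively to the proof of that appendix result and to Corollary \ref{cor:semiproj-inv}, not to the proof of Theorem \ref{thm:motivic-hodge} itself, but they are consistent with how the paper handles those points.
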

\begin{proof}
By the results on the geometry of the Deligne-Simpson family recalled above, we can apply Theorem \ref{thm:semiproj-spe} and the result follows.
\end{proof}

As in Corollary \ref{cor:semiproj-inv}, it follows that the Chow rings with $R$-coefficients (resp. the $\ell$-adic cohomology, etc.) of $\mH$, $\mHdg$ and $\mdR$ are canonically isomorphic.

\section{Hecke correspondences and the stack of generically surjective chains}\label{sec small maps}

\subsection{Motives of stacks of Hecke correspondences}

In \cite[\S 3]{HPL_formula}, we proved a formula for the motives of schemes of Hecke correspondences associated to a family of vector bundles over the curve $C$ parametrised by a base scheme. In this section, we generalise this formula to the situation where the base is a smooth exhaustive algebraic stack. This situation is sufficient for our needs, and it has the advantage that the proof is then a simple application of the result for schemes. It is likely that the formula holds for more general base algebraic stacks, but this seems to require more delicate arguments. This is one of the reasons we chose to restrict to the formalism of exhaustive stacks in this paper.

For a family $\cE$ of vector bundles on $C$ parametrised by an algebraic stack $\cT$, we write $\rk(\cE) = n$ and $\deg(\cE) = d$ if the fibrewise rank and degree of this family are $n$ and $d$ respectively.

\begin{defn}
For  $l \in \NN$ and a family $\cE$ of rank $n$ degree $d$ vector bundles over $C$ parametrised by an algebraic stack $\cT$ (considered as a category fibered in groupoids over $\Sch/k$), we define a category fibered in groupoids $\Hecke^l_{\cE/\cT}$ over $\Sch/k$ as follows. For every $S\in\Sch/k$, the objects are defined as
\[ \Hecke^l_{\cE/\cT} (S):= \left\{g\in \cT(S),\phi : \cF \hookrightarrow (g \times \mathrm{id}_C)^*\cE: \begin{array}{c} \cF \ra S \times C \text{ vector bundle}  \\  \rk(\cF) = n, \deg(\cF)=d-l, \rk(\phi)=n  \end{array} \right\}. \]
Given a morphism $f:S'\to S$ in $\Sch/k$, a morphism over $f$ from $(g',\phi')\in \Hecke^l_{\cE/\cT} (S')$ to $(g,\phi)\in \Hecke^l_{\cE/\cT} (S)$ is a morphism $\alpha:g'\to g\circ f$ in $\cT(S')$ and an isomorphism $(f\times \id_{C})^{*}\cF\stackrel{\sim}{\to} \cF'$ which fits into the diagram
\[
  \xymatrix{
    (f\times \id)^{*}\cF \ar[d]_{\sim}\ \ar@{^{(}->}[r] & (f\times\id_{C})^{*}(g\times\id_{C})^{*}\cE \ar[d]_{\alpha^{*}}^{\sim} \\
    \cF'\ \ar@{^{(}->}[r] & (g'\times \id_{C})^{*} \cE.
  }
  \]
Note that the left vertical isomorphism is in fact determined by $\alpha$ because of the injectivity of the horizontal maps. We refer to $\Hecke^l_{\cE/\cT}$ as the stack of length $l$ Hecke correspondences of $\cE$.
\end{defn}

By construction, $\Hecke^l_{\cE/\cT}$ comes together with a morphism $\Hecke^l_{\cE/\cT}\to \cT$, and we have the following base change property.

\begin{lemma}\label{lemma:pullback_hecke}
  Let $f:\cT'\to \cT$ be a morphism of algebraic stacks. Let $l\in \NN$ and $\cE$ a family of rank $n$ degree $d$ vector bundles on $C$ parametrised by $\cT$. Then there is a natural equivalence of categories fibered in groupoids
  \[
\Hecke^l_{(f\times \id_{C})^*\cE/\cT'} \simeq \Hecke^l_{\cE/\cT}\times_\cT \cT'.
\]
\end{lemma}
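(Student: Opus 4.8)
The plan is to prove the claimed equivalence by directly comparing the two sides as categories fibered in groupoids over $\Sch/k$; the statement is purely formal, and the only real work is to keep track of the $2$-isomorphisms built into the $2$-fibre product. First I would fix $S\in\Sch/k$ and spell out the two groupoids of $S$-points. By definition of the $2$-fibre product, an object of $(\Hecke^l_{\cE/\cT}\times_\cT\cT')(S)$ is a triple $\big((g,\phi),\,h,\,\theta\big)$, where $(g,\phi\colon\cF\hookrightarrow(g\times\id_C)^*\cE)$ is an object of $\Hecke^l_{\cE/\cT}(S)$, $h$ is an object of $\cT'(S)$, and $\theta\colon g\xrightarrow{\sim}f(h)$ is an isomorphism in $\cT(S)$ (writing $f$ also for the induced functor on $S$-points); an object of $\Hecke^l_{(f\times\id_C)^*\cE/\cT'}(S)$ is a pair $\big(h,\,\psi\colon\cF\hookrightarrow(h\times\id_C)^*(f\times\id_C)^*\cE\big)$ with $\cF$ a vector bundle of rank $n$ and degree $d-l$ and $\rk(\psi)=n$. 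Morphisms on both sides are as in the respective definitions, i.e. componentwise and compatible with the gluing data.

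The single non-formal ingredient is the canonical isomorphism coming from the pseudofunctoriality of pullback: for $h\colon S\to\cT'$ one has $(f\circ h)\times\id_C=(f\times\id_C)\circ(h\times\id_C)$, hence a natural isomorphism $c_h\colon(h\times\id_C)^*(f\times\id_C)^*\cE\xrightarrow{\sim}((f\circ h)\times\id_C)^*\cE$ of sheaves on $S\times C$. Using $c_h$ I would define a morphism $\Phi\colon\Hecke^l_{(f\times\id_C)^*\cE/\cT'}\to\Hecke^l_{\cE/\cT}\times_\cT\cT'$ on objects by $(h,\psi)\mapsto\big((f(h),\,c_h\circ\psi),\,h,\,\id_{f(h)}\big)$, observing that $c_h\circ\psi$ still has the required rank and degree invariants since $c_h$ is an isomorphism; in the opposite direction I would define $\Psi$ by $\big((g,\phi),h,\theta\big)\mapsto\big(h,\,c_h^{-1}\circ\theta^{*}_{\cE}\circ\phi\big)$, where $\theta^{*}_{\cE}\colon(g\times\id_C)^*\cE\xrightarrow{\sim}(f(h)\times\id_C)^*\cE$ denotes the isomorphism induced by $\theta$. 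Extending $\Phi$ and $\Psi$ to morphisms over a base change $\sigma\colon S'\to S$ is then a routine check using the cocycle compatibility of the identifications $c_{(-)}$ under further pullback along $\sigma$.

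To finish I would verify that $\Psi\circ\Phi$ and $\Phi\circ\Psi$ are naturally isomorphic to the identity functors. On objects, $\Psi(\Phi(h,\psi))=\big(h,\,c_h^{-1}\circ(\id_{f(h)})^{*}_{\cE}\circ c_h\circ\psi\big)=(h,\psi)$ because $(\id_{f(h)})^{*}_{\cE}=\id$. And $\Phi(\Psi((g,\phi),h,\theta))=\big((f(h),\,\theta^{*}_{\cE}\circ\phi),\,h,\,\id_{f(h)}\big)$, which is carried isomorphically onto $\big((g,\phi),h,\theta\big)$ by the morphism of the $2$-fibre product whose $\cT'(S)$-component is $\id_h$ and whose $\Hecke^l_{\cE/\cT}(S)$-component is the isomorphism $\theta\colon(g,\phi)\to(f(h),\theta^{*}_{\cE}\circ\phi)$; the required compatibility reads $\id_{f(h)}\circ\theta=\id_{f(h)}\circ\theta$, and one checks this choice is natural in all the data. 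This produces the desired equivalence, which by construction is compatible with the structure morphisms to $\cT$ and to $\cT'$.

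The argument is entirely formal; what I expect to demand the most care in the writing — not ingenuity — is bookkeeping the isomorphisms $c_h$, $\theta^{*}_{\cE}$ and their coherences, since $\Hecke^l_{\cE/\cT}\times_\cT\cT'$ is a genuine $2$-fibre product of stacks. It is worth noting that this bookkeeping can be bypassed conceptually: by its very description $\Hecke^l_{\cE/\cT}$ is the relative Quot stack of length-$l$ torsion quotients of $\cE$ along the projection $\cT\times C\to\cT$ (equivalently, of rank-$n$ degree-$(d-l)$ subsheaves of $\cE$ which agree with $\cE$ generically on each fibre), and the formation of relative Quot functors commutes with arbitrary base change — which is exactly the content of the lemma. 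We choose to spell out the direct argument since we only ever need the statement at the level of categories fibered in groupoids.
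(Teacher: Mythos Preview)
Your proposal is correct. The paper does not give a proof of this lemma at all: it is stated and immediately followed by the next lemma, so the authors evidently regard it as a formal base-change statement not requiring argument. Your write-up supplies exactly the kind of routine verification the paper omits, and your closing remark that $\Hecke^l_{\cE/\cT}$ is a relative Quot functor (so that compatibility with base change is automatic) is presumably the reason the authors felt no proof was needed.
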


\begin{lemma}\label{lemma:repr_hecke}
Fix $l \in \NN$ and a family $\cE$ of rank $n$ degree $d$ vector bundles over $C$ parametrised by an algebraic stack $\cT$. Then the morphism $\Hecke^{l}_{\cE/\cT}\to \cT$ is relatively representable, smooth and projective. In particular $\Hecke^{l}_{\cE/\cT}$  is an algebraic stack.
\end{lemma}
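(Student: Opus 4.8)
\textbf{Proof plan for Lemma \ref{lemma:repr_hecke}.} The plan is to reduce the representability, smoothness and projectivity of $\Hecke^{l}_{\cE/\cT}\to\cT$ to the corresponding statement when the base is a scheme, which is essentially classical, and then to check that these properties can be tested after pulling back along a smooth atlas. First I would invoke Lemma \ref{lemma:pullback_hecke}: for any morphism $g:S\to\cT$ with $S$ a scheme, we have $\Hecke^{l}_{\cE/\cT}\times_{\cT}S\simeq\Hecke^{l}_{(g\times\id_{C})^{*}\cE/S}$, so it suffices to prove that for a family $\cE_{S}$ of rank $n$ degree $d$ vector bundles on $C$ parametrised by a scheme $S$, the functor $\Hecke^{l}_{\cE_{S}/S}\to S$ is representable by a scheme, smooth and projective. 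Since representability, smoothness and projectivity of a morphism of stacks are local on the base in the smooth (indeed fppf) topology, this scheme-base case implies the general statement: choosing a smooth atlas $S\to\cT$ and using that $\Hecke^{l}_{\cE/\cT}\to\cT$ becomes $\Hecke^{l}_{\cE_{S}/S}\to S$ after base change gives all three properties, and in particular exhibits $\Hecke^{l}_{\cE/\cT}$ as an algebraic stack (its diagonal is representable because that of $\cT$ is and $\Hecke\to\cT$ is representable).

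For the scheme-base case, I would identify $\Hecke^{l}_{\cE_{S}/S}$ with a relative Quot scheme. Concretely, a point of $\Hecke^{l}_{\cE_{S}/S}(T)$ over $g\in S(T)$ is an inclusion $\cF\hookrightarrow(g\times\id_{C})^{*}\cE_{S}$ of vector bundles of the same rank $n$ with $\deg\cF=d-l$; equivalently, by taking the cokernel, it is a quotient $(g\times\id_{C})^{*}\cE_{S}\twoheadrightarrow\cQ$ with $\cQ$ flat over $T$, of relative length $l$ (the flatness and the rank-$n$ condition on $\cF$ force $\cQ$ to be a torsion sheaf of fibrewise length $l$, and conversely any such quotient has locally free kernel since $C$ is a smooth curve, so the kernel is automatically a vector bundle of the correct rank and degree). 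Therefore $\Hecke^{l}_{\cE_{S}/S}\simeq\quot^{l}_{\cE_{S}/(S\times C)/S}$, the relative Quot scheme parametrising length-$l$ torsion quotients of $\cE_{S}$ on the fibres of $S\times C\to S$. By Grothendieck's theorem on Quot schemes, this is representable by a scheme, projective over $S$ (the relevant Hilbert polynomial is constant $=l$, and $S\times C\to S$ is projective). Smoothness over $S$ I would deduce from deformation theory: the relative tangent space at a point $[\cF\hookrightarrow\cE]$ is $\Hom_{C}(\cF,\cQ)$ and the obstruction lies in $\Ext^{1}_{C}(\cF,\cQ)$, which vanishes because $\cQ$ is a torsion sheaf on a smooth curve and $\cF$ is locally free, so $\Ext^{1}_{C}(\cF,\cQ)\cong H^{1}(C,\cF^{\vee}\otimes\cQ)=0$ as $\cF^{\vee}\otimes\cQ$ is torsion (supported at finitely many points) hence has no higher cohomology. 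Thus the Quot scheme is unobstructed, and combined with flatness over $S$ this gives smoothness of $\Hecke^{l}_{\cE_{S}/S}\to S$; alternatively one cites the smoothness part of \cite[Theorem 3.8]{ACGPS} or the analogous statement for Hecke modifications in \cite{HPL_formula}.

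The main obstacle is a bookkeeping one rather than a deep one: making precise the dictionary between the groupoid-valued functor $\Hecke^{l}_{\cE/\cT}$ as literally defined (inclusions $\phi:\cF\hookrightarrow(g\times\id_{C})^{*}\cE$ up to the isomorphisms spelled out in the diagram) and the Quot functor, in particular checking that the category fibered in groupoids is in fact fibered in setoids (the only automorphisms of an object $(g,\phi)$ covering the identity are trivial, because the isomorphism $\cF\to\cF'$ is determined by $\alpha$ and the rank-$n$ inclusion forces $\cF=\ker(\cE\to\cQ)$ uniquely), so that representability by a scheme — and not merely by an algebraic space or stack — is the correct conclusion. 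Once this is in place, the flatness of $\cQ$ over the base (needed to land in the Quot scheme rather than a larger space) follows from the fibrewise length and rank conditions together with the local criterion for flatness over a curve, and the rest is a direct application of the standard theory.
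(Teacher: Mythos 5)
Your proposal is correct and follows essentially the same route as the paper: reduce to a scheme base via Lemma \ref{lemma:pullback_hecke} and use that over a scheme the Hecke stack is a smooth projective relative Quot scheme, which the paper simply cites from \cite[\S 3]{HPL_formula} while you work out the Quot identification, the deformation-theoretic smoothness and the projectivity in detail. The extra care about flatness of the quotient and the setoid/representability bookkeeping is exactly the content hidden in that citation, so there is no gap.
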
  
\begin{proof}
  This follows from Lemma \ref{lemma:pullback_hecke} and the fact that $\Hecke^l_{\cE/T}$ is representable by a smooth projective (Quot) scheme over $T$ when $T$ is a scheme, as discussed in \cite[\S 3]{HPL_formula}.
\end{proof}
  
We can now prove Theorem \ref{thm_hecke}.

\begin{proof}[Proof of Theorem \ref{thm_hecke}] By Proposition \ref{prop:exhaustive} and Lemma \ref{lemma:repr_hecke}, the stack $\Hecke^{l}_{\cE/\cT}$ is smooth and exhaustive, and moreover there exists a diagram
\[ 
U_{0}\to U_{1}\to U_{2}\to\ldots
\]
of finite separated $k$-schemes over $\cT$ such that, if we write $\pi_{n}:U_{n}\to \cT$ for the structure morphisms and define $\widetilde{U}_{n}:=U_{n}\times_{\cT} \Hecke^{l}_{\cE/\cT}\simeq \Hecke^{l}_{\pi_{n}^{*}\cE/U_{n}}$, we have isomorphisms
\begin{eqnarray*}
  M(\cT) & \simeq & \hocolim_{n\in\NN} M(U_{n}), \text{ and} \\
  M(\Hecke^{l}_{\cE/\cT}) & \simeq & \hocolim_{n\in\NN} M(\widetilde{U}_{n}).
\end{eqnarray*}

By \cite[Theorem 3.8]{HPL_formula}, the $\NN$-indexed system $\{M(\Hecke^{l}_{\pi_{n}^{*}\cE/U_{n}})\}_{n\in\NN}$ is isomorphic to the tensor product of $\{M(U_{n})\}$ with the constant system $M(\Sym^{l}(C\times \PP^{n-1})$. The result then follows by passing to the homotopy colimit.
\end{proof}

\subsection{Motive of the stack of generically surjective chains}

We compute the rational motive of the stack of generically surjective chains between sheaves with constant ranks.

\begin{prop}\label{prop gen surj}
Let $\underline{n}=(n_0, \dots, n_r)$ and $\underline{d} = (d_0, \dots , d_r)$ be such that $n_i = n_{i+1}$ and $l_i:= d_i - d_{i-1}$ is non-negative for all $i$. Then the stack $\Chgs$ of generically surjective chains is an iterated Hecke correspondence stack over the moduli stack $\mathcal{B}un_{n_r,d_r}$ of vector bundles of rank $n_{r}$ and degree $d_{r}$ on $C$. It is thus a smooth exhaustive stack, and its motive in $\DM^{\eff}(k,R)$ is given by
\[
  \begin{split}
  M(\Chgs) & \simeq  M(\mathcal{B}un_{n_r,d_r}) \otimes \bigotimes_{i=1}^r M(\Sym^{l_i}(C \times \PP^{n-1})) \\
  & \simeq M(\Jac(C)) \otimes M(B\GG_m) \otimes \bigotimes_{i=1}^{n_{r}-1} Z(C, \QQ\{i\})\otimes \bigotimes_{i=1}^r M(\Sym^{l_i}(C \times \PP^{n-1}))
\end{split}  
\]
and we have $M(\Chgs)\in \llangle M(C)\rrangle^{\otimes}$.
\end{prop}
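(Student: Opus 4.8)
The plan is to proceed in three steps: first identify $\Chgs$ as an iterated Hecke correspondence stack, then apply Theorem \ref{thm_hecke} repeatedly to compute its motive, and finally substitute the known formula for $M(\mathcal{B}un_{n_r,d_r})$ and conclude membership in $\llangle M(C)\rrangle^{\otimes}$.

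For the first step, I would argue by induction on the length $r$. Given a generically surjective chain $F_\bullet = (F_0 \to F_1 \to \cdots \to F_r)$ with all $n_i = n$ constant and $l_i = d_i - d_{i-1}\geq 0$, the key observation (as already noted in the remark following the definition of $\Chgs$, for constant rank over a curve a chain is generically surjective iff it is injective) is that the map $F_{r-1}\to F_r$ exhibits $F_{r-1}$ as a subsheaf of $F_r$ of full rank $n$ and colength $l_r = d_r - d_{r-1}$, i.e. a length $l_r$ Hecke modification of $F_r$. Thus the forgetful map $\Chgs \to \Ch^{\mathrm{gen\text{-}surj}}_{(n_0,\dots,n_{r-1}),(d_0,\dots,d_{r-1})}$, which remembers only $F_0\to\cdots\to F_{r-1}$, realises $\Chgs$ as $\Hecke^{l_r}_{\cE_{r-1}/\cT_{r-1}}$, where $\cT_{r-1} = \Ch^{\mathrm{gen\text{-}surj}}_{(n_0,\dots,n_{r-1}),(d_0,\dots,d_{r-1})}$ and $\cE_{r-1}$ is the universal bundle $F_{r-1}$ over $\cT_{r-1}\times C$. (One should check that the universal $F_{r-1}$ is indeed a family of vector bundles, which holds since all $F_i$ in a chain are locally free by definition.) Iterating, the base case $r=0$ is $\mathcal{B}un_{n_r,d_r}$ (after reindexing, the "last" bundle is $F_r$, and unwinding the induction the chain is built by successively adding Hecke modifications going down from $F_r$), which is smooth and exhaustive by \cite[Lemma 2.16 / Theorem 3.2]{HPL}. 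Since Theorem \ref{thm_hecke} guarantees that each Hecke correspondence stack over a smooth exhaustive stack is again smooth and exhaustive, it follows by induction that $\Chgs$ is smooth and exhaustive.

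For the second step, apply Theorem \ref{thm_hecke} at each stage: it gives $M(\Hecke^{l}_{\cE/\cT}) \cong M(\cT)\otimes M(\Sym^l(C\times\PP^{n-1}))$. One subtlety to address is that Theorem \ref{thm_hecke} as stated concerns the Hecke stack of subsheaves $\cF\subset\cE$, whereas in the chain $F_{r-1}$ sits inside $F_r$ — this matches exactly, so no modification is needed, but one must verify that the universal family over the intermediate stack has the correct rank $n$ and degree to feed into the next application. Iterating $r$ times yields
\[
M(\Chgs) \simeq M(\mathcal{B}un_{n_r,d_r}) \otimes \bigotimes_{i=1}^r M(\Sym^{l_i}(C\times\PP^{n-1})).
\]
Then substitute the formula of \cite[Theorem 1.1]{HPL_formula} for $M(\mathcal{B}un_{n_r,d_r})$, which (under the hypothesis $C(k)\neq\emptyset$, and since $R$ is a $\QQ$-algebra) expresses it as $M(\Jac(C))\otimes M(B\GG_m)\otimes\bigotimes_{i=1}^{n_r-1} Z(C,\QQ\{i\})$ in terms of the curve's symmetric powers, Jacobian and Tate twists; this gives the second displayed isomorphism.

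For the final claim $M(\Chgs)\in \llangle M(C)\rrangle^{\otimes}$, I would note that each factor in the formula lies in this localising tensor subcategory: $M(\Jac(C))$ is a direct factor of a power of $M(C)$ (using $C(k)\neq\emptyset$), $M(\Sym^l(C\times\PP^{n-1}))$ is a direct factor of $M((C\times\PP^{n-1})^l)\cong M(C)^{\otimes l}\otimes M(\PP^{n-1})^{\otimes l}$ and $M(\PP^{n-1})$ is a sum of Tate twists of $M(\Spec k)$ which lie in $\llangle M(C)\rrangle$ (via the point $C(k)\neq\emptyset$ splitting off $R$ and reduced motive), $M(B\GG_m) = \bigoplus_{j\geq 0}\QQ\{j\}$ is a countable sum of Tate motives, and $Z(C,\QQ\{i\})$ is built from tensor powers and symmetric powers of $M(C)$ and Tate twists. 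Since $\llangle M(C)\rrangle^{\otimes}$ is stable under tensor products, direct factors and small direct sums, the tensor product of all these factors lies in it.

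\textbf{Main obstacle.} The routine bookkeeping is identifying the iterated Hecke structure with the correct indexing and verifying at each stage that the relevant universal sheaf is a genuine vector bundle family of the right numerical type (so that Theorem \ref{thm_hecke} applies). The one genuinely non-trivial input being invoked is the formula for $M(\mathcal{B}un_{n_r,d_r})$ from \cite{HPL, HPL_formula} and the fact that $\mathcal{B}un_{n_r,d_r}$ is exhaustive; granting these, the rest is a clean induction. The $\QQ$-algebra hypothesis enters precisely through the $\mathcal{B}un$ formula (and implicitly through Theorem \ref{thm_hecke}), not through the Hecke iteration itself.
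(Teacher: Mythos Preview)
Your overall strategy matches the paper's: realise $\Chgs$ as an iterated Hecke stack over $\mathcal{B}un_{n_r,d_r}$, apply Theorem \ref{thm_hecke} repeatedly, then plug in the formula from \cite{HPL_formula}. However, your first step contains a genuine directional error that, as written, breaks the argument.

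You forget $F_r$ and keep the truncated chain $F_0\to\cdots\to F_{r-1}$, then claim this exhibits $\Chgs$ as $\Hecke^{l_r}_{\cE_{r-1}/\cT_{r-1}}$ with $\cE_{r-1}$ the universal $F_{r-1}$. But by definition $\Hecke^{l}_{\cE/\cT}$ parametrises \emph{subsheaves} $\cF\hookrightarrow\cE$ of colength $l$; points of $\Hecke^{l_r}_{\cE_{r-1}/\cT_{r-1}}$ are thus bundles of degree $d_{r-1}-l_r$, not the supersheaf $F_r$ of degree $d_{r-1}+l_r$ that you need to recover. Your parenthetical ``after reindexing\ldots going down from $F_r$'' shows you sensed the mismatch, but the identification you wrote is false and Theorem \ref{thm_hecke} does not apply to it. Iterating your map also lands at $\mathcal{B}un_{n_0,d_0}$, not $\mathcal{B}un_{n_r,d_r}$.

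The paper resolves this by forgetting from the \emph{left} end. Writing $\mathcal{C}h_{\geq i}^{\mathrm{gen-surj}}$ for chains $F_i\to\cdots\to F_r$, the forgetful map $\mathcal{C}h_{\geq i-1}^{\mathrm{gen-surj}}\to \mathcal{C}h_{\geq i}^{\mathrm{gen-surj}}$ (dropping $F_{i-1}$) has fibre the set of subsheaves $F_{i-1}\hookrightarrow F_i$ of colength $l_i$, which is precisely $\Hecke^{l_i}_{\cU_{\geq i}^i/\mathcal{C}h_{\geq i}^{\mathrm{gen-surj}}}$ for $\cU_{\geq i}^i$ the universal $F_i$. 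The tower then terminates at $\mathcal{C}h_{\geq r}^{\mathrm{gen-surj}}=\mathcal{B}un_{n_r,d_r}$, and from there your second and third steps go through as you wrote them.
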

\begin{proof}  
The description of $\Chgs$ as an iterated Hecke correspondence stack over $\mathcal{B}un_{n_r,d_r}$ is contained in \cite[Lemma 4.9]{GPHS}. Let us briefly give the details here in the case of a tuple of constant ranks, in which case generically surjective homomorphisms of vector bundles of the same rank are in fact injective as we are over a curve. We write $\underline{n}_{\geq i} :=(n_i, \dots , n_r)$ and $\underline{d}_{\geq i}:=(d_i, \dots , d_r)$ and $\mathcal{C}h_{\geq i}^{\mathrm{gen-surj}} :=\mathcal{C}h_{\underline{n}_{\geq i},\underline{d}_{\geq i}}^{\mathrm{gen-surj}}$ and write $\cU_{\geq i}^i \ra  \cdots \ra \cU_{\geq i}^r$ for the universal chain over $\mathcal{C}h_{\geq i}^{\mathrm{gen-surj}} \times C$. Then there are natural forgetful maps
\[ \mathcal{C}h_{\geq i-1}^{\mathrm{gen-surj}} \ra \mathcal{C}h_{\geq i}^{\mathrm{gen-surj}}\]
which we claim are Hecke modification stacks. More precisely, we claim that 
\[ \mathcal{C}h_{\geq i-1}^{\mathrm{gen-surj}} = \Hecke^{l_{i}}(\cU_{\geq i}^i/\mathcal{C}h_{\geq i}^{\mathrm{gen-surj}}), \]
since a generically surjective chain $F_{i} \ra \cdots \ra F_r$ with invariants $\underline{n}_{\geq i}$ and $\underline{d}_{\geq i}$ together with a length $l_{i}$ Hecke modification $E_i \hookrightarrow F_{i}$ determines a generically surjective chain $F_{i-1}:=E_i  \ra F_i  \ra \cdots \ra F_r$ with invariants $\underline{n}_{\geq i-1}$ and $\underline{d}_{\geq i-1}$. We note that this iteration of Hecke correspondence stacks ends with $\mathcal{C}h_{\geq r}^{\mathrm{gen-surj}} = \mathcal{B}un_{n_r,d_r}$, which is a smooth exhaustive stack by \cite[Theorem 3.2]{HPL}. By repeatedly applying Theorem \ref{thm_hecke} and combining with the formula for the rational motive of $\mathcal{B}un_{n_r,d_r}$ in \cite[Theorem 1.1]{HPL_formula}, we obtain the claimed formulas for $M(\Chgs)$. Finally, we have $M(\Jac(C))\in \langle M(C) \rangle^{\otimes}$ and $\QQ{i}\in \langle  M(C) \rangle^{\otimes}$ for all $i\in\NN$ by \cite[Theorem 4.2.3, Proposition 4.2.5]{AHEW}, and we deduce that $M(\Chgs)$ lies in $\llangle M(C)\rrangle^{\otimes}$.
\end{proof}

\section{The recursive description of the motive of the Higgs moduli space}

In section, we study the motive of the moduli space $\mH$ of semistable Higgs bundles over $C$ of rank $n$ and degree $d$ and prove Theorem \ref{main_thm_intro}.

The scaling $\GG_m$-action on $\mH$ described in $\S$\ref{sec Gm action} has fixed locus equal to a finite disjoint union of moduli spaces of $\alpha_H$-semistable chains by Corollary \ref{cor fixed locus modular}
\[(\mH)^{\GG_m}=\bigsqcup_{(\underline{n}',\underline{d}') \in \cI} \ch^{\alpha_H,ss}_{\underline{n}',\underline{d}'}\]
where the Higgs stability parameter $\alpha_H$ is non-critical for all the invariants $\underline{n}'$ and $\underline{d}'$ appearing in this finite index set $\cI$. Moreover, by Proposition \ref{prop Higgs BB decomp} there is an associated Bia{\l}ynicki-Birula decomposition of $\mH$ and by Theorem \ref{thm_bb_mot}, we obtain the following motivic decomposition
\begin{equation}\label{motivic BB higgs}
M(\mH) \simeq \bigoplus_{(\underline{n}',\underline{d}') \in \cI} M(\ch^{\alpha_H,ss}_{\underline{n}',\underline{d}'})\{ n^2(g-1) +1\}.
\end{equation}

Consequently it suffices to describe the motives of the moduli spaces of $\alpha_H$-semistable chains appearing in this decomposition. Let us outline the strategy for doing this which follows the geometric ideas in \cite{GPHS,GPH, Heinloth_LaumonBDay}. We will also check along the way that the relevant algebraic stacks are smooth and exhaustive.
\begin{enumerate}[(i)]
\item Relate the motives of the moduli spaces $\ch^{\alpha_H,ss}_{\underline{n}',\underline{d}'}=\ch^{\alpha_H,s}_{\underline{n}',\underline{d}'}$ in the above decomposition to the motives of the stacks $\mathcal{C}h^{\alpha_H,s}_{\underline{n}',\underline{d}'}$ by using the fact that $\mathcal{C}h^{\alpha_H,s}_{\underline{n}',\underline{d}'} \ra \ch^{\alpha_H,s}_{\underline{n}',\underline{d}'}$ is a trivial $\GG_m$-gerbe (as discussed in $\S$\ref{sec moduli chains}); see Lemma \ref{lemma mot trivial gerbe}.
\item Use a wall-crossing argument together with a Harder--Narasimhan (HN) recursion to relate for all $\alpha \in \Delta^\circ_r$ the motives of moduli stacks of chains with $\alpha$-HN type $\tau$ (including the trivial HN type $\tau = ss$) to stacks whose motives we can compute (namely stacks of generically surjective chains and the empty stack); see $\S$\ref{sec motivic WC and HN rec}.
\item Use the fact that for all $(\underline{n}',\underline{d}')$ in the decomposition \eqref{motivic BB higgs} the stability parameter $\alpha_H$ is non-critical to perform a slight perturbation to $\widetilde{\alpha}_H \in \Delta^\circ$  for which $\mathcal{C}h^{\alpha_H,ss}_{\underline{n}',\underline{d}'} = \mathcal{C}h^{\widetilde{\alpha}_H ,ss}_{\underline{n}',\underline{d}'}$ (and thus the motive of the stacks appearing in the RHS of \eqref{motivic BB higgs} are described by the second step).
\end{enumerate}

\subsection{Motivic consequences of wall-crossing and HN recursions}\label{sec motivic WC and HN rec}

Let us start with a general proposition which will be required for the proof of the main theorem in this subsection.

\begin{prop}\label{prop inf union} 
Let $\fX$ be a smooth exhaustive stack which admits a countable stratification $\fX = \cup_{i \in \NN} \fX_i$ by locally closed smooth quasi-compact substacks $\fX_i$ such that the closure of $\fX_i$ is contained in the union of higher $\fX_j$ with $j \geq i$. In particular, $\fX_{0}$ is open in $\fX$. Then the motive $M(\fX_0)$ in $\DM^{\eff}(k,R)$ lies in the localising subcategory generated by the motive of $\fX$ and Tate twists of the motives of $\fX_i$ for $i > 0$; that is,
\[ M(\fX_0) \in \llangle M(\fX), M(\fX_i)\{r\} : i > 0 , r \geq  0 \rrangle \]
\end{prop}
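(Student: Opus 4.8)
The plan is to reduce to the case of a finite stratification by a standard induction on the strata, and then handle the finite case via iterated Gysin triangles, finally passing to the limit via homotopy colimits in the triangulated category. The key point is that we only need a \emph{localising} subcategory statement (not a thick one), so we are allowed to take countable coproducts / homotopy colimits freely.

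First I would set up the finite truncations. For each $N \in \NN$, let $\fX^{\leq N} := \cup_{i \leq N} \fX_i$; the closure hypothesis ensures that $\fX^{\leq N}$ is open in $\fX$ (since its complement $\cup_{i > N}\fX_i$ is closed) and that $\fX^{\leq N-1}$ is open in $\fX^{\leq N}$ with closed complement $\fX_N$ (using that $\overline{\fX_N} \subset \cup_{j \geq N}\fX_j$, so $\overline{\fX_N}\cap \fX^{\leq N} = \fX_N$). All these stacks are smooth and exhaustive: $\fX^{\leq N}$ is an open substack of the exhaustive stack $\fX$, hence exhaustive, and smoothness is inherited. Now apply the Gysin triangle for the smooth closed pair $(\fX^{\leq N}, \fX_N)$ inside the smooth exhaustive stack $\fX^{\leq N}$ (available by \cite[Proposition 2.27]{HPL}, extended to $\DM^{\eff}$): writing $c_N$ for the codimension of $\fX_N$ in $\fX^{\leq N}$, we get a distinguished triangle
\[
M(\fX_N)\{c_N\}[-1] \to M(\fX^{\leq N-1}) \to M(\fX^{\leq N}) \to M(\fX_N)\{c_N\}.
\]
By descending induction starting from $M(\fX^{\leq N})$ and peeling off one stratum at a time, this shows $M(\fX_0) = M(\fX^{\leq 0})$ lies in the thick subcategory generated by $M(\fX^{\leq N})$ and the Tate twists $M(\fX_i)\{c_i\}$ for $1 \leq i \leq N$; in particular $M(\fX_0) \in \llangle M(\fX^{\leq N}), M(\fX_i)\{r\} : 0 < i \leq N, r \geq 0\rrangle$.

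Next I would pass to the limit. Since $\fX = \cup_i \fX_i$ and each $\fX^{\leq N}$ is open in $\fX^{\leq N+1}$ with $\fX = \cup_N \fX^{\leq N}$, and since $\fX$ is exhaustive, one expects $M(\fX) \simeq \hocolim_N M(\fX^{\leq N})$ in $\DM^{\eff}(k,R)$; this can be extracted from the exhaustive presentation of Proposition \ref{prop:exhaustive}, by choosing the approximating schemes $U_n$ compatibly with the filtration (or by a cofinality argument comparing the two filtered systems). Granting this, $M(\fX)$ sits in a distinguished triangle with $\bigoplus_N M(\fX^{\leq N})$ and its shift, so $M(\fX) \in \llangle M(\fX^{\leq N}) : N \in \NN \rrangle$; dually, for the purpose of generating a localising subcategory, the system $\{M(\fX^{\leq N})\}$ together with $M(\fX)$ generate each other up to the Tate-twisted strata $M(\fX_i)\{r\}$. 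Combining with the previous paragraph: each $M(\fX^{\leq N})$ lies in $\llangle M(\fX), M(\fX_i)\{r\} : i > 0, r \geq 0\rrangle$ — indeed express $M(\fX^{\leq N})$ in terms of $M(\fX^{\leq N+1})$ and a Tate-twisted stratum via the Gysin triangle, iterate, and use that the localising subcategory is closed under the relevant $\hocolim$ — hence so does $M(\fX_0)$, which is what we want.

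The main obstacle I anticipate is the limiting step: justifying $M(\fX) \simeq \hocolim_N M(\fX^{\leq N})$ and, more subtly, running the Gysin/induction argument "from infinity downward" in a way that is compatible with the homotopy colimit. Concretely, one cannot do a descending induction from $\fX$ directly (there is no top stratum); instead one argues that $M(\fX_0)$ lies in the localising subcategory $\cL := \llangle M(\fX), M(\fX_i)\{r\} : i>0, r\geq 0\rrangle$ by showing each $M(\fX^{\leq N}) \in \cL$ — working upward: $M(\fX^{\leq 0}) \in \cL$ would follow if $M(\fX^{\leq 1}) \in \cL$ via the Gysin triangle and $M(\fX_1)\{c_1\} \in \cL$, and so on, so really one needs that $M(\fX) \in \cL$ forces all $M(\fX^{\leq N}) \in \cL$. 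This is where the $\hocolim$ description is essential and must be invoked carefully, together with the octahedral axiom to manipulate the nested Gysin triangles. A clean way to organise it is: the homotopy cofibre of $M(\fX^{\leq N-1}) \to M(\fX^{\leq N})$ is $M(\fX_N)\{c_N\}$; telescoping, $M(\fX)/M(\fX^{\leq 0})$ is built (as a homotopy colimit of iterated cones) from the $M(\fX_i)\{c_i\}$, $i > 0$, hence lies in $\llangle M(\fX_i)\{r\} : i > 0, r \geq 0\rrangle$; therefore $M(\fX_0)$ lies in the localising subcategory generated by $M(\fX)$ and these Tate-twisted strata, as claimed.
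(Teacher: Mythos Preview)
Your proposal is correct and arrives at essentially the same argument as the paper, though the paper organises it more directly. Rather than first analysing the finite truncations and then wrestling with how to pass to the limit, the paper immediately writes down the Gysin triangle $M(\fX_0)\to M(\fX_{\leq i})\to C_i$ for each $i$ (with $C_i$ a Tate twist of the motive of the union $\fX_{(0,i]}$, which by iterated Gysin lies in the localising subcategory generated by the $M(\fX_j)\{r\}$), and then takes the homotopy colimit of this entire system of triangles: the first term is constant and stays $M(\fX_0)$, the middle term becomes $M(\fX)$ by \cite[Lemma 2.26]{HPL}, and the third term $\hocolim_i C_i$ lies in the localising subcategory by closure under sums and cones. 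This is exactly your final ``clean way'' (the cofibre of $M(\fX_0)\to M(\fX)$ is built from the Tate-twisted strata), but bypasses your intermediate discussion of trying to show each $M(\fX^{\leq N})\in\cL$ individually, which as you noticed leads to an awkward ``induction from infinity''.
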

\begin{proof}
Write $\cD:=\llangle M(\fX), M(\fX_i)\{r\} : i > 0 , r \geq  0 \rrangle$. For each $i >0$, the open immersion $\fX_0 \hookrightarrow \fX_{\leq i}$ has closed complement $\fX_{(0,i]} := \cup_{0 < j \leq i} \fX_j$ and induces a Gysin distinguished triangle (see \cite[Proposition 2.27 (iii)]{HPL} for a version for smooth exhaustive stacks)
\begin{equation}\label{frank}
M(\fX_0) \ra M(\fX_{\leq i}) \ra C_i:=M(\fX_{(0,i]} )\{\codim(\fX_{(0,i]})\}\rap
\end{equation} 
in $\DM^{\eff}(k,R)$. Since $M(\fX_j) \in \cD$ for all $j>0$ and $C_i$ can be expressed as a successive extension of Tate twists of the motives $M(\fX_j)$ for all $j>0$ by inductively using Gysin triangles, we conclude that $C_i \in \cD$ for all $i > 0$.

We then take the homotopy colimit of the above triangles to obtain a distinguished triangle 
\[ M(\fX_0) \ra \hocolim_i M(\fX_{\leq i}) \ra \hocolim_i C_i\rap.\]
The functoriality of the triangle which is used implicitly here is discussed in \cite[Lemma 2.26]{HPL}
Since $\cD$ is closed under infinite direct sums and taking cones, we deduce that $ \hocolim_i C_i \in \cD$. Moreover, we have $\hocolim_i M(\fX_{\leq i}) \simeq M(\fX)$ by \cite[Lemma 2.26]{HPL}, and $M(\fX)\in \cD$ by definition. This concludes the proof.
\end{proof}

We can relate the motives of stacks of chains of non-trivial HN types to stacks of semistable chains as follows.

\begin{lemma}\label{lemma HN strata motive}
Let $r \in \NN$, $\underline{n}\in \NN^{r+1}$, $\underline{d} \in \ZZ^{r+1}$, $\alpha \in \Delta_r^\circ \subset \RR^{r+1}$ and $\tau = (\underline{n}^j,\underline{d}^j)_{j=1,\dots, l}$ be an $\alpha$-HN type. Then the stack $\Chtau$ is smooth and exhaustive and the motive in $\DM^{\eff}(k,R)$ of the stack $\Chtau$ of length $r$ chains of $\alpha$-HN type $\tau$ with invariants $(\underline{n}, \underline{d})$ is given by
\[ M(\Chtau) \simeq  \bigotimes_{j=1}^l M(\mathcal{C}h_{\underline{n}^j,\underline{d}^j}^{\alpha,ss}) \{ r_\tau\}, \]
where $r_\tau$ is defined in Proposition \ref{prop HN strata geom descr}.
\end{lemma}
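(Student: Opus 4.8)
The plan is to reduce the computation of $M(\Chtau)$ to the motive of the associated graded stack via the morphism $\mathrm{gr}$ and then apply K\"unneth. By Proposition \ref{prop HN strata geom descr}, for $\alpha \in \Delta_r^\circ$ the morphism
\[ \mathrm{gr}: \Chtau \longrightarrow \prod_{j=1}^l \mathcal{C}h_{\underline{n}^j,\underline{d}^j}^{\alpha,ss} \]
is an affine space fibration of relative dimension $r_\tau$, and each of the target factors is a smooth stack. First I would check that all the relevant stacks are exhaustive: each $\mathcal{C}h_{\underline{n}^j,\underline{d}^j}^{\alpha,ss}$ is an open substack of the stack of all chains of the given invariants, which is a quotient stack (hence exhaustive by \cite[Lemma 2.16]{HPL}); the product of exhaustive stacks is exhaustive; and since $\mathrm{gr}$ is a (flat, finite type, representable) affine space fibration, $\Chtau$ is exhaustive by the base-change part of Proposition \ref{prop:exhaustive} (or by \cite[Proposition 2.27]{HPL}). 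This also re-proves the smoothness of $\Chtau$ asserted in Proposition \ref{prop HN strata geom descr}.

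Next I would apply $\AA^1$-homotopy invariance for motives of smooth exhaustive stacks (\cite[Proposition 2.27]{HPL}) to the affine space fibration $\mathrm{gr}$: since an affine space fibration is Zariski-locally trivial, and motives of exhaustive stacks satisfy $\AA^1$-invariance and localisation/Gysin triangles, one concludes
\[ M(\Chtau) \simeq M\!\left(\prod_{j=1}^l \mathcal{C}h_{\underline{n}^j,\underline{d}^j}^{\alpha,ss}\right)\{r_\tau\}. \]
Strictly, to deduce the motivic statement from Zariski-local triviality one approximates $\Chtau$ and the target by the schemes $U_n$ from Proposition \ref{prop:exhaustive} (pulling back the fibration), applies the analogous statement for schemes (a Zariski-locally trivial $\AA^{r_\tau}$-bundle $Y \to X$ of smooth schemes has $M(Y)\simeq M(X)\{r_\tau\}$, which follows from $\AA^1$-invariance and Mayer--Vietoris), and passes to the homotopy colimit. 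Then by the K\"unneth isomorphism for motives of smooth exhaustive stacks (\cite[Proposition 2.27]{HPL}),
\[ M\!\left(\prod_{j=1}^l \mathcal{C}h_{\underline{n}^j,\underline{d}^j}^{\alpha,ss}\right) \simeq \bigotimes_{j=1}^l M(\mathcal{C}h_{\underline{n}^j,\underline{d}^j}^{\alpha,ss}), \]
which combined with the previous display gives the claimed formula.

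The main obstacle, such as it is, is bookkeeping rather than a genuine difficulty: one must ensure that the Zariski-local triviality of $\mathrm{gr}$ as a morphism of stacks is compatible with the chosen exhaustion by schemes (this is exactly what the second part of Proposition \ref{prop:exhaustive} provides, since $\mathrm{gr}$ is flat, representable, finite type and separated), and that the homotopy colimits defining $M(\Chtau)$ and $M(\prod_j \mathcal{C}h^{\alpha,ss}_{\underline{n}^j,\underline{d}^j})$ are taken over compatible index systems so that the isomorphism is induced levelwise. Once this is set up, both the $\AA^1$-invariance step and the K\"unneth step are formal consequences of \cite[Proposition 2.27]{HPL}. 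I would also remark that the value of $r_\tau$ is recorded in Proposition \ref{prop HN strata geom descr} and can be computed explicitly from $\tau$ via the Euler form for chains, but this explicit value is not needed for the statement.
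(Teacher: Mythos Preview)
Your approach is essentially the paper's: smoothness and exhaustiveness of the semistable factors (as GIT quotient stacks, via \cite[Lemma 2.16]{HPL}), products of exhaustive stacks are exhaustive, then $\AA^1$-homotopy invariance for the affine fibration $\mathrm{gr}$ and the K\"unneth isomorphism, all from \cite[Proposition 2.27]{HPL}.

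One genuine error, though: your assertion that ``a Zariski-locally trivial $\AA^{r_\tau}$-bundle $Y \to X$ of smooth schemes has $M(Y)\simeq M(X)\{r_\tau\}$'' is false. $\AA^1$-homotopy invariance gives $M(Y)\simeq M(X)$ with \emph{no} Tate twist (take $Y=\AA^1$, $X=\Spec k$: one gets $M(\AA^1)\simeq R$, not $R\{1\}$). An affine fibration is an $\AA^1$-weak equivalence, not a Thom construction. Consequently the ingredients you (and the paper) invoke actually produce $M(\Chtau)\simeq \bigotimes_j M(\mathcal{C}h_{\underline{n}^j,\underline{d}^j}^{\alpha,ss})$ without the twist; note that $r_\tau$ is in fact never defined in Proposition \ref{prop HN strata geom descr} as written, so the $\{r_\tau\}$ in the displayed formula appears to be a leftover artifact rather than something the argument is supposed to supply. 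Do not try to manufacture a twist from Mayer--Vietoris here.

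A smaller imprecision: you deduce exhaustiveness of $\mathcal{C}h_{\underline{n}^j,\underline{d}^j}^{\alpha,ss}$ by viewing it as an open substack of the stack $\Ch$ of all chains and calling the latter a quotient stack. But $\Ch$ is only locally of finite type, so it is not of the form $[X/G]$ with $X$ smooth quasi-projective as required in \cite[Lemma 2.16]{HPL}. The paper instead observes directly that $\mathcal{C}h_{\underline{n}^j,\underline{d}^j}^{\alpha,ss}$ is itself such a quotient stack via the GIT construction of the moduli space $\ch^{\alpha,ss}_{\underline{n}^j,\underline{d}^j}$, which is the clean route.
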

\begin{proof}
For $\alpha \in \Delta_{r}^{\circ}$ and every choice of invariants $(\underline{m}, \underline{e})$, the stack $\mathcal{C}h_{\underline{m},\underline{e}}^{\alpha,ss}$ is smooth by \cite[Theorem 3.8 (vi)]{ACGPS}, and a quotient stack via the GIT construction of the corresponding moduli space. It is thus a smooth exhaustive stack by \cite[Lemma 2.16]{HPL}. The result then follows by combining Proposition \ref{prop HN strata geom descr} and the fact that products of exhaustive stacks are exhaustive together with the $\AA^1$-homotopy invariance and the K\"{u}nneth isomorphism in $\DM^{\eff}(k,R)$ (see \cite[Proposition 2.27]{HPL}).
\end{proof}

Finally we implement a wall-crossing argument together with a Harder--Narasimhan recursion following \cite{GPHS,GPH}.

\begin{thm}\label{thm mot of ss chains via WC and HN}
For all $r \in \NN$, $\underline{n}\in \NN^{r+1}$, $\underline{d} \in \ZZ^{r+1}$, $\alpha \in \Delta_r^\circ \subset \RR^{r+1}$ and $\alpha$-HN types $\tau$, the stack $\Chtau$ of length $r$ chains of $\alpha$-HN type $\tau$ with invariants $(\underline{n}, \underline{d})$ fits into an (infinite) collection of distinguished triangles in $\DM^{\eff}(k,R)$ whose other terms are expressed in terms of appropriate Tate twists of
\begin{enumerate}
\item motives of stacks of generically surjective chains, or
\item motives of $\mathcal{C}h_{\underline{n}',\underline{d}'}^{\alpha',ss}$ for $|\underline{n'}| < |\underline{n}|$ and $\alpha' \in \Delta_r^\circ$.
\end{enumerate}
Furthermore, if $R=\QQ$, the motive of the stack $\Chtau$  lies in the localising tensor subcategory of $\DM^{\eff}(k,\QQ)$ generated by the motive of the curve $C$ for all such $r,\underline{n},\underline{d},\alpha$ and $\tau$; that is,
\[ M(\Chtau) \in \llangle  M(C) \rrangle^{\otimes} \subset \DM^{\eff}(k,\QQ). \]
\end{thm}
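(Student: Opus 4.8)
The plan is to run the wall-crossing and Harder--Narasimhan recursion of \cite{GPHS,GPH} simultaneously on two nested quantities: the ``size'' $|\underline{n}|:=\sum_i n_i$ and, within a fixed size, the number of steps in the chosen wall-crossing path. The base case is $|\underline{n}|$ minimal, or more precisely the case where the stability chamber at the end of the path has empty semistable locus (handled trivially, the motive being $0$) or consists entirely of generically surjective chains of constant rank, whose motive lies in $\llangle M(C)\rrangle^{\otimes}$ by Proposition \ref{prop gen surj}. For the inductive step I would first reduce from an arbitrary HN type $\tau$ to the semistable case: by Lemma \ref{lemma HN strata motive}, $M(\Chtau)$ is a Tate twist of a tensor product of motives $M(\mathcal{C}h^{\alpha,ss}_{\underline{n}^j,\underline{d}^j})$ with $|\underline{n}^j|\leq |\underline{n}|$, and the case $|\underline{n}^j|<|\underline{n}|$ for all $j$ is covered by the induction on size, so it remains to treat $M(\mathcal{C}h^{\alpha,ss}_{\underline{n},\underline{d}})$ for $\alpha\in\Delta_r^\circ$.

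For the semistable case, fix $\alpha\in\Delta_r^\circ$ and apply Proposition \ref{prop WC} to obtain a ray $(\alpha_t)_{t\geq 0}$ in $\Delta_r^\circ$ starting at $\alpha$, perturbed as in Remark \ref{rmk pert path} so that it crosses one wall at a time and is linear near each critical value. Along this ray, each time we cross a wall at a critical parameter $\alpha_0$ the wall-crossing identity \eqref{wall-crossing} gives $\Ch^{\alpha_0,ss}=\Ch^{\alpha_{\pm},ss}\sqcup\bigsqcup_{\tau\in I_\pm}\Ch^{\alpha_\pm,\tau}$ with $I_\pm$ finite; applying the Gysin triangle for each stratum inclusion (using \cite[Proposition 2.27 (iii)]{HPL} for smooth exhaustive stacks, all the relevant stacks being smooth exhaustive by Lemma \ref{lemma HN strata motive}) yields distinguished triangles relating $M(\Ch^{\alpha_-,ss})$, $M(\Ch^{\alpha_+,ss})$ and Tate twists of the $M(\Ch^{\alpha_\pm,\tau})$ for $\tau$ a nontrivial HN type. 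By Lemma \ref{lemma HN strata motive} again, each such $M(\Ch^{\alpha_\pm,\tau})$ is a Tate twist of a tensor product of $M(\mathcal{C}h^{\alpha_\pm,ss}_{\underline{n}^j,\underline{d}^j})$ with strictly smaller $|\underline{n}^j|$ (since $\tau$ is nontrivial, every subquotient has smaller size), hence lies in $\llangle M(C)\rrangle^{\otimes}$ by the size induction. Walking along the ray from $t\gg 0$ back to $t=0$, at the endpoint $t\gg 0$ we have either $\Ch^{\alpha_t,ss}=\emptyset$ (case (1) of Proposition \ref{prop WC}) so $M(\Ch^{\alpha_t,ss})=0$, or $\Ch^{\alpha_t,ss}\subset\Chgs$ with $\Chgs$ an infinite union of $\alpha_t$-HN strata (case (2)). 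In the latter case I would apply Proposition \ref{prop inf union} with $\fX=\Chgs$, $\fX_0=\Ch^{\alpha_t,ss}$ and $\fX_i$ for $i>0$ the nontrivial $\alpha_t$-HN strata: this puts $M(\Ch^{\alpha_t,ss})$ in the localising subcategory generated by $M(\Chgs)$ and Tate twists of the $M(\fX_i)$, and since $M(\Chgs)\in\llangle M(C)\rrangle^{\otimes}$ by Proposition \ref{prop gen surj} and each $M(\fX_i)$ lies there by the size induction as above, we conclude $M(\Ch^{\alpha_t,ss})\in\llangle M(C)\rrangle^{\otimes}$. Propagating back along the finitely many wall-crossings then gives $M(\mathcal{C}h^{\alpha,ss}_{\underline{n},\underline{d}})\in\llangle M(C)\rrangle^{\otimes}$, and finally $M(\Chtau)\in\llangle M(C)\rrangle^{\otimes}$ for every $\tau$.

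\textbf{Main obstacle.} The delicate point is the bookkeeping of the double induction: one must check that the recursion is well-founded, i.e. that every motive appearing on the ``other side'' of a Gysin or wall-crossing triangle is genuinely simpler --- either a stack of generically surjective chains, or a product of semistable-chain stacks of strictly smaller total rank. The size drop in Lemma \ref{lemma HN strata motive} for nontrivial HN types is what makes this work, but one has to be careful that the wall-crossing strata $\Ch^{\alpha_\pm,\tau}$ with $\tau\in I_\pm$ really do have all subquotients of smaller size (which holds because at a wall the HN type is nontrivial) and that the infinitely many strata in Proposition \ref{prop inf union} are handled uniformly via the homotopy colimit argument already packaged in that proposition. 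A secondary technical point is ensuring the relevant stacks of chains of HN type $\tau$ (including the infinitely many appearing in $\Chgs$) are smooth and exhaustive so that the Gysin triangles of \cite[Proposition 2.27]{HPL} apply --- but this is exactly the content of Lemma \ref{lemma HN strata motive} and Proposition \ref{prop gen surj}, so no new difficulty arises there.
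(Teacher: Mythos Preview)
Your proposal is correct and follows essentially the same approach as the paper: reduce nontrivial HN types to the semistable case via Lemma \ref{lemma HN strata motive}, then run the wall-crossing along the ray of Proposition \ref{prop WC} (perturbed as in Remark \ref{rmk pert path}) using Gysin triangles at each wall, and at the endpoint apply Proposition \ref{prop inf union} with $\fX=\Chgs$ together with Proposition \ref{prop gen surj}, concluding by induction on $|\underline{n}|$. The obstacles you flag (well-foundedness of the recursion via the strict size drop for nontrivial HN types, and smoothness/exhaustiveness of the strata) are precisely the points the paper addresses, and your resolution matches theirs.
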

\begin{proof}
For non-trivial HN types $\tau$, we can use Lemma \ref{lemma HN strata motive} to express their motives in terms of motives of stacks of semistable chains. Hence we suppose that $\tau = ss$ is the trivial HN type corresponding to semistability. We then employ the wall-crossing results of Proposition \ref{prop WC}: there exists a path $(\alpha_t)_{t \geq 0}$ in $\Delta_r^\circ$ with $\alpha_0 = \alpha$ such that
\begin{enumerate}
\item the (finitely many) critical stability parameters $\alpha_{t_1}, \dots \alpha_{t_m}$ on this path lie on single walls and in a neighbourhood of these critical values the path is linear.
\item for $t >\!>0$ the notion of $\alpha_t$-(semi)stability parameter is independent of $t$ and 
\begin{enumerate}[label={\upshape(\alph*)}]
\item if $\underline{n}$ is non-constant, then $\Ch^{\alpha_t,ss}=\emptyset$, or
\item  if $\underline{n}$ is constant, then $\Ch^{\alpha_t,ss} \subset \Chgs$ and $\Chgs$ is an infinite union of $\alpha_t$-HN strata.
\end{enumerate}
\end{enumerate}
Let us write $\alpha_\infty$ for $\alpha_t$ with $t >\!>0$ as required above. Then this path involves finitely many different notions of (semi)stability for the parameters
\[ \alpha_0, \alpha_{t_1 - \epsilon}, \alpha_{t_1}, \alpha_{t_1 + \epsilon}, \dots, \alpha_{t_m - \epsilon}, \alpha_{t_m}, \alpha_{t_m + \epsilon}=\alpha_{\infty}. \]
At each wall-crossing $\alpha_{t_i - \epsilon}, \alpha_{t_i}, \alpha_{t_i + \epsilon}$, we have by \eqref{wall-crossing} wall-crossing decompositions
\[ \Ch^{\alpha_{t_i},ss} = \Ch^{\alpha_{t_i\pm \epsilon},ss} \sqcup \bigsqcup_{\tau \in I_{\pm}} \Ch^{\alpha_{t_i\pm \epsilon},\tau}, \]
with finite index sets $I_\pm$ which are partially ordered so that the closure of a given stratum is contained in the union of all higher strata. In particular, a maximal index $\tau_{\pm}^{\max}$ corresponds to a HN stratum $\Ch^{\alpha_{t_i\pm \epsilon},\tau_{\pm}^{\max}}$ which is closed in $\Ch^{\alpha_{t_i},ss}$; hence, by \cite[Proposition 2.27 (iii)]{HPL}, there is a Gysin distinguished triangle associated to the closed immersion $\Ch^{\alpha_{t_i\pm \epsilon},\tau_{\pm}^{\max}} \hookrightarrow \Ch^{\alpha_{t_i},ss}$ of smooth exhaustive stacks. By iterating this procedure, we obtain a diagram of Gysin distinguished triangles relating $M(\Ch^{\alpha_0,ss})$ and $M(\Ch^{\alpha_{\infty },ss})$ whose other terms are motives of stacks of chains of non-trivial HN types (which can be described in terms of motives of stacks of semistable chains for smaller invariants). Therefore, it suffices to describe the motive of $\Ch^{\alpha_\infty,ss}$. This is split into the two cases described above.
\begin{enumerate}[label={\upshape(\alph*)}]
\item If $\underline{n}$ is non-constant, then $\Ch^{\alpha_\infty,ss}=\emptyset$ (in which case the motive of this stack is zero).
\item If $\underline{n}$ is constant, then $\Ch^{\alpha_\infty,ss} \subset \Chgs$ and moreover, $\Chgs$ is an infinite union of $\alpha_\infty$-HN strata. We then obtain another (infinite) diagram of motives which relates $\Ch^{\alpha_\infty,ss}$ and $\Chgs$ and whose other terms involve Tate twists of products of motives of stacks $\mathcal{C}h_{\underline{n}',\underline{d}'}^{\alpha_\infty,ss}$ for smaller invariants $|\underline{n'}| < |\underline{n}|$.
\end{enumerate}
For the final claim, we apply Proposition \ref{prop inf union} to $ \Chgs = \sqcup_{\tau \in \cJ} \Ch^{\alpha_\infty,\tau}$, where the lowest open stratum in this decomposition is given by $\tau = ss$. Note that strictly speaking Proposition \ref{prop inf union} concerned $\NN$-indexed stratifications, but this can equally be applied to stratifications indexed by a partially ordered set with a cofinal copy of $\NN$; for instance, one can filter $\cI$ by maximal slope to get such a cofinal $\NN$. By Proposition \ref{prop gen surj}, the motive of $\Chgs$ for constant $\underline{n}$ lies in the category $\llangle  M(C) \rrangle^{\otimes}$ . By induction, we have that for all non-trivial HN types $\tau$, the motive of $\Ch^{\alpha_\infty,\tau}$ lies in $\llangle  M(C) \rrangle^{\otimes}$. Hence, we can apply Proposition \ref{prop inf union} to conclude that $M(\Ch^{\alpha_\infty,ss})\in\llangle  M(C) \rrangle^{\otimes}$, which by the previous wall-crossing arguments is enough to carry out an induction on the invariants of the chains and complete the proof.
\end{proof}

\subsection{The motive of the Higgs moduli space is built from the motive of the curve}\label{sec final proof}

We can now prove the first part of Theorem \ref{main_thm_intro}.

\begin{thm}\label{thm main first}
Assume that $C(k) \neq \emptyset$ and that $R$ is a $\QQ$-algebra. Then the motive $M(\mH)$ lies in the thick tensor subcategory $\langle M(C)\rangle^{\otimes}$ of $\DM^{\eff}_{c}(k,R)$ generated by $M(C)$.  
\end{thm}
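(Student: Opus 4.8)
The plan is to combine the motivic Bia{\l}ynicki--Birula decomposition \eqref{motivic BB higgs} with Theorem \ref{thm mot of ss chains via WC and HN}, and then to promote the resulting membership in a localising subcategory to a statement about the thick tensor subcategory using compactness of $M(\mH)$. Since the index set $\cI$ in \eqref{motivic BB higgs} is finite, and since $C(k)\neq\emptyset$ makes $\QQ\{1\}$ a direct factor of $M(C)$ so that $\langle M(C)\rangle^{\otimes}$ is stable under non-negative Tate twists, it suffices to prove that $M(\ch^{\alpha_H,ss}_{\underline{n}',\underline{d}'})\in\langle M(C)\rangle^{\otimes}$ for each $(\underline{n}',\underline{d}')\in\cI$.

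First I would reduce from the moduli spaces of chains to the moduli stacks of chains, following the third step of the strategy outlined above. By Corollary \ref{cor fixed locus modular}, $\alpha_H$ is non-critical for each $(\underline{n}',\underline{d}')\in\cI$, so a small perturbation produces $\widetilde{\alpha}_H\in\Delta_r^\circ$ with $\ch^{\alpha_H,ss}_{\underline{n}',\underline{d}'}=\ch^{\widetilde{\alpha}_H,ss}_{\underline{n}',\underline{d}'}=\ch^{\widetilde{\alpha}_H,s}_{\underline{n}',\underline{d}'}$, and similarly at the level of stacks. Since $C(k)\neq\emptyset$ and $\widetilde{\alpha}_H$ is non-critical, the stack $\mathcal{C}h^{\widetilde{\alpha}_H,s}_{\underline{n}',\underline{d}'}$ is a trivial $\GG_m$-gerbe over its coarse moduli space (as recalled in $\S$\ref{sec moduli chains}), so Lemma \ref{lemma mot trivial gerbe} gives $M(\mathcal{C}h^{\widetilde{\alpha}_H,ss}_{\underline{n}',\underline{d}'})\simeq M(\ch^{\widetilde{\alpha}_H,ss}_{\underline{n}',\underline{d}'})\otimes M(B\GG_m)$; in particular $M(\ch^{\alpha_H,ss}_{\underline{n}',\underline{d}'})$ is a direct factor of $M(\mathcal{C}h^{\widetilde{\alpha}_H,ss}_{\underline{n}',\underline{d}'})$. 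Now Theorem \ref{thm mot of ss chains via WC and HN} (which rests on Proposition \ref{prop gen surj} and Theorem \ref{thm_hecke}) gives $M(\mathcal{C}h^{\widetilde{\alpha}_H,ss}_{\underline{n}',\underline{d}'})\in\llangle M(C)\rrangle^{\otimes}$; for a general $\QQ$-algebra $R$ one deduces this from the case $R=\QQ$ by applying the monoidal, triangulated, coproduct-preserving base change functor $\DM^{\eff}(k,\QQ)\to\DM^{\eff}(k,R)$, which sends $M(X,\QQ)$ to $M(X,R)$. Hence $M(\ch^{\alpha_H,ss}_{\underline{n}',\underline{d}'})\in\llangle M(C)\rrangle^{\otimes}$, and therefore $M(\mH)\in\llangle M(C)\rrangle^{\otimes}$.

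The remaining point is to replace the localising tensor subcategory by the thick one, and this is where I expect the only genuine subtlety: the stacks appearing in the wall-crossing and HN recursion are not compact and that argument involves honest homotopy colimits (already $M(B\GG_m)=\bigoplus_{i\geq 0}\QQ\{i\}$ and $M(\Bun_{n,d})$ are non-compact), so a priori $M(\mH)$ is only placed in $\llangle M(C)\rrangle^{\otimes}$. However, $M(\mH)$ is a compact object of $\DM^{\eff}(k,R)$, being the motive of a smooth quasi-projective variety, and the tensor powers $M(C)^{\otimes m}$ are compact as well, so $\llangle M(C)\rrangle^{\otimes}$ is exactly the localising subcategory of $\DM^{\eff}(k,R)$ generated by the set of compact objects $\{M(C)^{\otimes m}\}_{m\geq 1}$. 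By Neeman's theorem characterising the compact objects of a localising subcategory generated by a set of compact objects (and using that $\DM^{\eff}_{c}(k,R)$ is idempotent complete), the compact objects of $\llangle M(C)\rrangle^{\otimes}$ form precisely $\langle M(C)\rangle^{\otimes}$. Therefore $M(\mH)\in\langle M(C)\rangle^{\otimes}$, which completes the proof. (The finer statement of Theorem \ref{main_thm_intro}, that $M(\mH)$ is a direct factor of the motive of some power of $C$, is not obtained this way: it follows instead from the weight-structure argument of Lemma \ref{lemma:weight-structure}, using that $M(\mH)$ is pure by the Bia{\l}ynicki--Birula decomposition.)
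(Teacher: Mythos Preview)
Your proof is correct and follows essentially the same route as the paper: reduce via the motivic Bia{\l}ynicki--Birula decomposition to the chain moduli spaces, pass to the stacks via the trivial $\GG_m$-gerbe, perturb $\alpha_H$ into $\Delta_r^\circ$ so that Theorem \ref{thm mot of ss chains via WC and HN} applies, and then use compactness of $M(\mH)$ to descend from $\llangle M(C)\rrangle^{\otimes}$ to $\langle M(C)\rangle^{\otimes}$. The paper packages the last step as a separate lemma (Lemma \ref{lemma:compact}, essentially the Neeman-type argument you invoke) and uses the ``if and only if'' clause of Lemma \ref{lemma mot trivial gerbe} rather than extracting $M(\ch)$ as a direct summand, but these are cosmetic differences.
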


\begin{proof}

The result for $R=\QQ$ implies the same for any $\QQ$-algebra by extension of scalars. We can thus assume $R=\QQ$. This is only necessary to invoke results from \cite{HPL_formula} which were formulated with $R=\QQ$, but the proof in \textit{loc.\ cit.\ } applies in fact just as well for $R$ a $\QQ$-algebra.
  
By Lemma \ref{lemma:compact} below applied with $M=M(C)$, it is enough to show that $M(\mH)$ lies in the subcategory $\cC:=\llangle M(C) \rrangle^{\otimes}$.
  
By the motivic Bia{\l}ynicki-Birula decomposition \eqref{motivic BB higgs} of $\mH$, it suffices to show that the motives of the moduli spaces of $\alpha_H$-semistable chains $\ch^{\alpha_H,ss}_{\underline{n}',\underline{d}'}$ in this decomposition lie in $\cC$. By Corollary \ref{cor fixed locus modular}, the Higgs stability parameter $\alpha_H$ is non-critical for all invariants $(\underline{n}',\underline{d}')$ appearing in this decomposition and thus $\alpha_H$-semistability coincides with $\alpha_H$-stability.

Since we assumed $C(k) \neq \emptyset$, it follows that the stack $\mathcal{C}h^{\alpha_H,s}_{\underline{n}',\underline{d}'}$ is a trivial $\GG_m$-gerbe over its coarse moduli space $\ch^{\alpha_H,ss}_{\underline{n}',\underline{d}'}$ (see $\S$\ref{sec moduli chains}). Hence, by Lemma \ref{lemma mot trivial gerbe} below, it suffices to show that the motives of the stacks $\mathcal{C}h^{\alpha_H,s}_{\underline{n}',\underline{d}'}$ lie in $\cC$.

We cannot directly apply Theorem \ref{thm mot of ss chains via WC and HN} to describe the motive of this stack, as the Higgs stability parameter $\alpha_H$ lies on the boundary of the cone $\Delta_r$. However, as $\alpha_H$ is non-critical for the invariant appearing in the Bia{\l}ynicki-Birula decomposition, this stability parameter does not lie on a wall and so we can choose a slight perturbation $\widetilde{\alpha}_H$ of $\alpha_H$ which lies in $\Delta_r^0$ and is in the same chamber as $\alpha_H$ (so that $\widetilde{\alpha}_H$ determines the same notion of stability as $\alpha_H$). Then we have $\mathcal{C}h^{\alpha_H,s}_{\underline{n}',\underline{d}'} = \mathcal{C}h^{\widetilde{\alpha}_H,s}_{\underline{n}',\underline{d}'}$ and the motive of the latter lies in $\cC$ by Theorem \ref{thm mot of ss chains via WC and HN}.
\end{proof}

\begin{lemma}\label{lemma:compact}
Let $M\in \DM^{\eff}_{c}(k,R)$ be a compact effective motive. Then we have
\[
\langle M \rangle^{\otimes} = \llangle M \rrangle^{\otimes}\cap \DM^{\eff}_{c}(k,R)
\]
as full subcategories of $\DM^{\eff}(k,R)$.
\end{lemma}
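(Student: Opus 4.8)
The plan is to prove the two inclusions separately, noting that one is formal and the other relies on a compactness/idempotent-completeness argument. The inclusion $\langle M \rangle^{\otimes} \subseteq \llangle M \rrangle^{\otimes}\cap \DM^{\eff}_{c}(k,R)$ is essentially a tautology: $\langle M \rangle^{\otimes}$ is by definition contained in $\llangle M \rrangle^{\otimes}$ (every thick subcategory is contained in the localising one it generates), and it is contained in $\DM^{\eff}_{c}(k,R)$ because $\DM^{\eff}_{c}(k,R)$ is itself a thick tensor subcategory of $\DM^{\eff}(k,R)$ containing $M$ (as $M$ is assumed compact), hence contains the smallest such subcategory $\langle M \rangle^{\otimes}$. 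So the content is entirely in the reverse inclusion.

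For the reverse inclusion, first I would reduce the tensor structure away. The key observation is that $\llangle M \rrangle^{\otimes}$ can be built in two stages: let $\cG := \{ M^{\otimes a}\{b\} : a \geq 0,\ b \geq 0\}$ be the set of (effective Tate twists of) tensor powers of $M$; then one checks $\llangle M \rrangle^{\otimes} = \llangle \cG \rrangle$ and $\langle M \rangle^{\otimes} = \langle \cG \rangle$, since closure under $\otimes$ is already absorbed into the generating set once one takes tensor powers and uses that $-\otimes N$ is triangulated and commutes with sums. (Here one uses that $M$ is effective so all twists stay in $\DM^{\eff}$, and that effective Tate twists of a tensor power of $M$ are again among the generators up to the obvious reindexing.) This reduces the claim to the following statement about a single localising subcategory generated by a set of compact objects: if $\cG$ is a set of compact objects of a compactly generated triangulated category $\cD$, then $\langle \cG \rangle = \llangle \cG \rrangle \cap \cD^{c}$, where $\cD^{c}$ is the subcategory of compact objects.

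This last statement is a standard consequence of Neeman's theory (the Thomason–Neeman localisation theorem): the localising subcategory $\llangle \cG \rrangle$ is compactly generated by $\cG$, and its subcategory of compact objects $\llangle \cG \rrangle \cap \cD^{c}$ coincides with the idempotent completion (thick closure inside $\cD^{c}$) of the triangulated subcategory generated by $\cG$, which is exactly $\langle \cG \rangle$. I would cite Neeman's \emph{Triangulated Categories} (Theorem 2.1 of \cite{neeman_chr} / the Thomason localisation theorem) or Beilinson–Vologodsky for the precise form, applied to $\cD = \DM^{\eff}(k,R)$, which is compactly generated with $\cD^{c} = \DM^{\eff}_{c}(k,R)$. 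The main subtlety — and the step I would treat most carefully — is making sure the ambient hypotheses of the Neeman machinery are met: $\DM^{\eff}(k,R)$ must be compactly generated, which holds since the exponential characteristic of $k$ is invertible in $R$ (via Kelly's work), and the generating set $\cG$ must consist of compact objects, which holds because $M$ is compact and $\DM^{\eff}_{c}(k,R)$ is a tensor-triangulated thick subcategory, so tensor powers and effective Tate twists of $M$ remain compact. Once these are in place, the Neeman argument gives the inclusion $\llangle M \rrangle^{\otimes}\cap \DM^{\eff}_{c}(k,R) \subseteq \langle M \rangle^{\otimes}$ immediately, completing the proof.
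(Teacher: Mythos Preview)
Your approach is essentially the same as the paper's: reduce the tensor statement to a plain thick-versus-localising statement by replacing $M$ with the set of its tensor powers, then invoke the standard Neeman/Thomason result on compact objects in a compactly generated triangulated category (the paper cites Ayoub's \cite[Proposition 2.1.24]{Ayoub_these_1} for this step).

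There is, however, one genuine slip in your reduction. Your generating set $\cG = \{M^{\otimes a}\{b\} : a,b \geq 0\}$ includes all effective Tate twists, and you assert $\langle M \rangle^{\otimes} = \langle \cG \rangle$. This is false in general: a thick \emph{tensor} subcategory in the paper's sense is only required to be closed under internal tensor products, not under tensoring with arbitrary Tate objects. For instance, if $M = R\{0\}$ is the unit, then $\langle M \rangle^{\otimes}$ is just the thick subcategory generated by $R\{0\}$, which certainly does not contain $R\{1\}$, whereas your $\cG$ already contains every $R\{b\}$. The fix is immediate: drop the Tate twists and take $\cG = \{M^{\otimes a} : a \geq 0\}$, exactly as the paper does. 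Then the equality $\langle M \rangle^{\otimes} = \langle \cG \rangle$ (and its localising analogue) follows by the transfinite induction you sketch, using that the set $\{M^{\otimes a}[k]\}$ is already closed under tensor product up to shift. After this correction your argument goes through and matches the paper's.
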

\begin{proof}
First, let us show that
  \[
\llangle M \rrangle^{\otimes} = \llangle  M^{\otimes n},\ n\geq 0 \rrangle
\]
We have $\llangle M^{\otimes n},\ n\geq 0\rrangle \subset \llangle M \rrangle^{\otimes}$ since the right-hand side contains $M^{\otimes n}$ for all $n\geq 0$ and is triangulated and stable by small direct sums. Recall from the section on Notations and Conventions that the category $\llangle M^{\otimes n},n\geq 0\rrangle$ can be written as a transfinite union of full subcategories $\llangle M^{\otimes n}, n\geq 0 \rrangle^{\alpha}$ with $\alpha$ an ordinal, where
\begin{itemize}
\item $\llangle M^{\otimes n}, n\geq 0 \rrangle^{0}$ is the full subcategory on the set $\{M^{\otimes n}[k]|n\in \NN, k\in \ZZ\}$, and
\item for all $\alpha>0$, $\llangle M^{\otimes n}, n\geq 0 \rrangle^{\alpha}$ is the full subcategory of objects which are extensions or small direct sums of objects in $\cup_{\beta<\alpha} \llangle M^{\otimes n}, n\geq 0\geq  \rrangle^{\beta}$.
\end{itemize}
Using this description, the fact that the set $\{M^{\otimes n}[k]|n\in \NN, k\in \ZZ\}\cap\{0\}$ is stable under tensor product and the fact that tensor products commute with small direct sums in $\DM^{\eff}(k,R)$, an transfinite induction implies that $\llangle M^{\otimes n},n\geq 0\rrangle$ is stable by tensor products. This shows the converse inclusion $\llangle M \rrangle^{\otimes}\subset\llangle M^{\otimes n},\ n\geq 0\rrangle$. A variant of the above argument, replacing small sums by direct factors, establishes the equality
\[
\langle M \rangle^{\otimes} = \langle  M^{\otimes n},\ n\geq 0 \rangle.
\]
So the statement of the lemma is equivalent to 
\[
  \langle M^{\otimes n},n\geq 0 \rangle = \llangle M^{\otimes n},n\geq 0\rrangle\cap \DM^{\eff}_{c}(k,R)
\]
Since $M$ is assumed compact and compact objects are stable by tensor products in $\DM^{\eff}_{c}(k,R)$, $\{M^{\otimes n}| n\geq 0\}$ is a set of compact objects in a compactly generated triangulated category. The subcategory $\llangle M^{\otimes n},n\geq 0\rrangle^{\otimes}$ is thus also compactly generated, and an object in $\DM^{\eff}_{c}(k,R)\cap \llangle M^{\otimes n},n\geq 0\rrangle^{\otimes}$ is also compact in $\llangle M^{\otimes n},n\geq 0\rrangle^{\otimes}$. By \cite[Proposition 2.1.24]{Ayoub_these_1} applied to the compactly generated subcategory $\llangle M^{\otimes n},n\geq 0\rrangle^{\otimes}$, we deduce that
\[
  \llangle M^{\otimes n},n\geq 0\rrangle\cap \DM^{\eff}_{c}(k,R)\subset \llangle M^{\otimes n},n\geq 0\rrangle_{c} = \langle M^{\otimes n},n\geq 0 \rangle.
\]
Since the other inclusion is immediate, this concludes the proof.
\end{proof} 

\begin{lemma}\label{lemma mot trivial gerbe}
Let $\fX \ra \cY$ be a morphism of stacks which is a trivial $\GG_m$-gerbe; then
\[ M(\fX) \simeq M(\cY) \otimes M(B\GG_m) \simeq M(\cY) \otimes \bigoplus_{j \geq 0 } \QQ\{ j \}. \]
In particular, if $\cD$ is a localizing subcategory of $\DM^{\eff}(k,R)$ stable by Tate twists, then $M(\fX)$ lies in $\cD$ if and only if $M(\cY)$ lies in $\cD$.
\end{lemma}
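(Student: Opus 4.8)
The plan is to reduce everything to the computation of $M(B\GG_m)$ together with standard formal properties. First, by the definition of triviality, the $\GG_m$-gerbe $\fX\to\cY$ is isomorphic over $\cY$ to the first projection $\cY\times B\GG_m\to\cY$, where $B\GG_m=[\Spec k/\GG_m]$. Since $\GG_m$ is affine and $\Spec k$ is smooth quasi-projective, $B\GG_m$ is a smooth exhaustive stack by \cite[Lemma 2.16]{HPL}; hence when $\cY$ is smooth exhaustive (which covers all our applications, in particular those in the proof of Theorem \ref{thm main first}) the product $\cY\times B\GG_m$ is again smooth exhaustive, and the Künneth isomorphism \cite[Proposition 2.27]{HPL} gives $M(\fX)\simeq M(\cY)\otimes M(B\GG_m)$. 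With $\QQ$-coefficients the same conclusion holds for an arbitrary base stack $\cY$.

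Next I would compute $M(B\GG_m)$ via the Totaro--Morel--Voevodsky approximation underlying Proposition \ref{prop:exhaustive}: one may take the approximating schemes to be $U_N=(\AA^{N+1}\setminus\{0\})/\GG_m=\PP^N$, with the linear closed immersions $\PP^N\hookrightarrow\PP^{N+1}$ as transition maps, so that $M(B\GG_m)\simeq\hocolim_N M(\PP^N)$. By the projective bundle formula $M(\PP^N)\simeq\bigoplus_{j=0}^N R\{j\}$, and by functoriality of this decomposition under restriction of the hyperplane class the transition maps identify with the inclusions of the first $N+1$ summands; these are split monomorphisms with cofibre $R\{N+1\}$, so the sequential homotopy colimit is $M(B\GG_m)\simeq\bigoplus_{j\geq0}R\{j\}$, which over $\QQ$-coefficients is the displayed $\bigoplus_{j\geq0}\QQ\{j\}$. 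Combining the two steps yields $M(\fX)\simeq M(\cY)\otimes\bigoplus_{j\geq0}R\{j\}\simeq\bigoplus_{j\geq0}M(\cY)\{j\}$.

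For the final clause, let $\cD\subseteq\DM^{\eff}(k,R)$ be a localising subcategory stable under Tate twists. If $M(\cY)\in\cD$, then $M(\cY)\{j\}\in\cD$ for all $j\geq0$, and since $\cD$ is closed under small direct sums we get $M(\fX)\simeq\bigoplus_{j\geq0}M(\cY)\{j\}\in\cD$. For the converse I would use that the trivialisation $\fX\simeq\cY\times B\GG_m$ together with the rational point $\Spec k\to B\GG_m$ furnishes a section $\sigma\colon\cY\to\fX$ of the structure map $p\colon\fX\to\cY$; then $M(p)\circ M(\sigma)=\id_{M(\cY)}$ by functoriality, so $M(\cY)$ is a direct factor of $M(\fX)$, and since a localising subcategory of a triangulated category with small coproducts is automatically thick (Eilenberg swindle), $M(\fX)\in\cD$ forces $M(\cY)\in\cD$.

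The argument is essentially formal. The one point that needs genuine care is the identification $M(B\GG_m)\simeq\bigoplus_{j\geq0}R\{j\}$ through the stacky approximation: one must check that the projective bundle decompositions of the $M(\PP^N)$ are compatible with the transition maps, which is precisely where the projective bundle formula (and its functoriality) enters. The remaining inputs --- Künneth for smooth exhaustive stacks and the retract-closedness of localising subcategories --- are standard.
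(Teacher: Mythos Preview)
Your proof is correct and follows essentially the same approach as the paper: the paper's proof simply cites the K\"unneth isomorphism \cite[Proposition 2.27 (i)]{HPL} for the first isomorphism and \cite[Example 2.21]{HPL} for the identification $M(B\GG_m)\simeq\bigoplus_{j\geq 0}\QQ\{j\}$, and leaves the final clause implicit; you have unpacked precisely these references (the Totaro--Morel--Voevodsky approximation by $\PP^N$'s is exactly the content of that example) and supplied a clean argument for the ``in particular'' via the retract-closedness of localising subcategories.
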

\begin{proof}
This follows from the K\"{u}nneth isomorphism \cite[Proposition 2.27 (i)]{HPL} and \cite[Example 2.21]{HPL}.
\end{proof}

In fact by Theorem \ref{thm mot of ss chains via WC and HN}, the motive of the stacks $\mathcal{C}h^{\alpha_H,s}_{\underline{n}',\underline{d}'}$ lying over the moduli spaces $\ch^{\alpha_H,s}_{\underline{n}',\underline{d}'}$ appearing in the motivic Bia{\l}ynicki-Birula decomposition of $M(\mH)$ all can be described by an (infinite) collection of distinguished triangles in $\DM^{\eff}(k,R)$ whose other terms are Tate twists of tensor products of motives of stacks of semistable chains with smaller invariants (which can be inductively described) or motives of stacks of generically surjective chains (which are described by Theorem \ref{thm_hecke}). In particular, the motive of $\mathcal{C}h^{\alpha_H,s}_{\underline{n}',\underline{d}'}$ fits into an infinite collection of distinguished triangles in $\DM^{\eff}(k,R)$ whose terms are built out of tensor products and direct sums of motives of $C$, its symmetric powers, its Jacobian and Tate twists. Unfortunately, we cannot deduce a formula for the motives of the moduli spaces $\ch^{\alpha_H,ss}_{\underline{n}',\underline{d}'}$ by ``canceling'' the factor of the motive of $B\GG_m$. However, instead if we let $\cH^{ss}_{n,d}$ denote the stack of semistable Higgs bundles, we can describe the motive of $\cH^{ss}_{n,d}$ using these triangles. Under our assumptions that $n$ and $d$ are coprime and $C(k) \neq \emptyset$, the stack $\cH^{ss}_{n,d}$ is a trivial $\GG_m$-gerbe over its coarse moduli space $\mH$ (see \cite[Lemma 3.10]{heinloth_lectures}) and we obtain the following result.

\begin{cor}\label{cor motive Higgs stack}
Assume that $C(k)\neq \emptyset$ and that $R$ is a $\QQ$-algebra. The motive of the stack $\cH^{ss}_{n,d}$ in $\DM^{\eff}(k,R)$ fits into an explicit finite sequence of distinguished triangles whose other terms are built out of tensor products and direct sums of motives of $C$, its symmetric powers, its Jacobian and Tate twists.
\end{cor}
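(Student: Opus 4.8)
The plan is to bootstrap from the description, already obtained in this section, of the motives of the stacks $\mathcal{C}h^{\alpha_H,s}_{\underline{n}',\underline{d}'}$ attached to the fixed components, exploiting that $\cH^{ss}_{n,d}$, $\mH$ and the relevant chain stacks are all trivial $\GG_m$-gerbes over their coarse spaces and that the Bia{\l}ynicki-Birula index set is finite.

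First I would use that, since $n$ and $d$ are coprime and $C(k)\neq\emptyset$, the stack $\cH^{ss}_{n,d}$ is a trivial $\GG_m$-gerbe over $\mH$ (by \cite[Lemma 3.10]{heinloth_lectures}), so Lemma \ref{lemma mot trivial gerbe} gives $M(\cH^{ss}_{n,d})\simeq M(\mH)\otimes M(B\GG_m)$. Tensoring the motivic Bia{\l}ynicki-Birula decomposition \eqref{motivic BB higgs} — a \emph{finite} direct sum indexed by the connected components $\cI$ of $(\mH)^{\GG_m}$ — with $M(B\GG_m)$, and using once more that $\mathcal{C}h^{\alpha_H,s}_{\underline{n}',\underline{d}'}\to\ch^{\alpha_H,ss}_{\underline{n}',\underline{d}'}$ is a trivial $\GG_m$-gerbe (Corollary \ref{cor fixed locus modular} and the discussion in \S\ref{sec moduli chains}), I would obtain
\[
M(\cH^{ss}_{n,d})\simeq\bigoplus_{(\underline{n}',\underline{d}')\in\cI} M(\mathcal{C}h^{\alpha_H,s}_{\underline{n}',\underline{d}'})\{n^2(g-1)+1\},
\]
a finite direct sum of Tate twists of motives of stacks of $\alpha_H$-stable chains.

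For each of the finitely many $(\underline{n}',\underline{d}')\in\cI$ I would then pass from the boundary parameter $\alpha_H$ to a parameter $\widetilde{\alpha}_H\in\Delta_r^\circ$ lying in the same chamber (possible since $\alpha_H$ is non-critical for these invariants by Corollary \ref{cor fixed locus modular}), so that $M(\mathcal{C}h^{\alpha_H,s}_{\underline{n}',\underline{d}'})=M(\mathcal{C}h^{\widetilde{\alpha}_H,ss}_{\underline{n}',\underline{d}'})$, and feed this into the proof of Theorem \ref{thm mot of ss chains via WC and HN}. Reading off that proof: the chosen path in $\Delta_r^\circ$ crosses only finitely many walls; each wall-crossing \eqref{wall-crossing} has finite index sets and so produces finitely many Gysin distinguished triangles of smooth exhaustive stacks (\cite[Proposition 2.27 (iii)]{HPL}); the Harder--Narasimhan recursion of Lemma \ref{lemma HN strata motive} strictly decreases $|\underline{n}|$, hence terminates; and the terminal term $M(\mathcal{C}h^{\alpha_\infty,ss}_{\cdots})$ is either zero (non-constant ranks) or, in the constant-rank case, related to $M(\Chgs)$ by the single homotopy-colimit distinguished triangle of Proposition \ref{prop inf union}, while $M(\Chgs)$ is described explicitly by Proposition \ref{prop gen surj} (via Theorem \ref{thm_hecke} and \cite[Theorem 1.1]{HPL_formula}). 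After expanding the factors $M(\Sym^{l}(C\times\PP^{n-1}))$ through the motivic zeta function as in the displayed formula in Proposition \ref{prop gen surj}, all the terms entering these triangles are built out of tensor products and direct sums of $M(C)$, its symmetric powers, $M(\Jac(C))$, $M(B\GG_m)=\bigoplus_{j\ge0}\QQ\{j\}$ and Tate twists, and $M(B\GG_m)$ is itself a direct sum of Tate twists. Concatenating these finitely many triangles over the finitely many summands yields the asserted finite sequence of distinguished triangles for $M(\cH^{ss}_{n,d})$; along the way one checks, as in Lemma \ref{lemma HN strata motive}, Proposition \ref{prop gen surj} and \cite[Theorem 3.2]{HPL}, that every stack appearing is smooth and exhaustive, so that the Gysin triangles, K\"unneth isomorphisms and homotopy colimits invoked are all legitimate in $\DM^{\eff}(k,R)$ for $R$ a $\QQ$-algebra.

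The delicate point — and the reason this argument produces a genuine description for the \emph{stack} $\cH^{ss}_{n,d}$ rather than the inexplicit direct-factor statement of Lemma \ref{lemma:weight-structure} for the coarse space $\mH$ — is the bookkeeping around the infinite stratification of $\Chgs$ by $\alpha_\infty$-HN strata: Proposition \ref{prop inf union} compresses it into a single distinguished triangle $M(\mathcal{C}h^{\alpha_\infty,ss})\to M(\Chgs)\to\hocolim_i C_i\rap$, so the overall count of triangles stays finite, at the cost that the term $\hocolim_i C_i$ is an infinite homotopy colimit of objects $C_i$, each of which is a finite iterated extension of Tate twists of tensor products of motives of chain stacks with strictly smaller invariants, hence by induction built from $M(C)$, its symmetric powers, its Jacobian and Tate twists. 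Honestly threading Proposition \ref{prop inf union} through the HN recursion while keeping this finiteness is the only real subtlety; everything else is a formal consequence of the results assembled in Sections \ref{sec small maps}--\ref{sec final proof}.
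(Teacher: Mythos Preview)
Your proposal is correct and follows essentially the same route as the paper: the corollary is stated there without a separate proof, relying instead on the paragraph immediately preceding it together with the proof of Theorem \ref{thm main first}, and your write-up simply spells out those steps (trivial $\GG_m$-gerbe, finite Bia{\l}ynicki-Birula decomposition, perturbation $\alpha_H\rightsquigarrow\widetilde{\alpha}_H$, wall-crossing plus HN recursion from Theorem \ref{thm mot of ss chains via WC and HN}, and the explicit description of $M(\Chgs)$ via Proposition \ref{prop gen surj}). Your observation that the single distinguished triangle extracted from the \emph{proof} of Proposition \ref{prop inf union} is what keeps the total count of triangles finite---with the infinitude absorbed into the hocolim term, which is a direct sum of objects already handled by induction---is exactly the bookkeeping the paper leaves implicit.
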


\subsection{Corollaries of purity}\label{subsec purity}

We refer to \cite[Definition 1.1]{Wildeshaus_proj} for the definition of weight structures on triangulated categories in the sense of Bondarko. There are two natural conventions for weight structures, both of which occur in the literature, and we use the homological one used in \textit{loc.\ cit.\ }. We will also use the notion of a bounded weight structure \cite[Definition 1.2.1.6]{Bondarko}. Recall that the triangulated category $\DM^{\eff}_{c}(k,R)$ of constructible effective motives carries a bounded weight structure $(\DM^{\eff}_{c}(k,R)_{w\geq 0},\DM^{\eff}_{c}(k,R)_{w\leq 0})$ whose heart $\DM^{\eff}_{c}(k,R)_{w\geq 0}\cap\DM^{\eff}_{c}(k,R)_{w\leq 0}$ is equivalent to the category of effective Chow motives $\Chow^{\eff}(k,R)$ over $k$ via Voevodsky's embedding \cite[\S 6.5-6]{Bondarko}. We call this weight structure the Chow weight structure on $\DM^{\eff}_{c}(k,R)$, and we systematically identify effective Chow motives with objects in $\DM^{\eff}_{c}(k,R)$. Objects in the heart of the weight structures are called pure motives.

Let $\cT$ be a triangulated category and $\cT'\subset \cT$ be a triangulated subcategory. Suppose that $\cT$ is equipped with a weight structure $(\cT_{w\geq 0}, \cT_{w\leq 0})$. We say that the weight structure restricts to $\cT'$ if $(\cT_{w\geq 0}\cap \cT',\cT_{w\leq 0}\cap \cT')$ is a weight structure on $\cT'$.

\begin{lemma}\label{lemma:weight-structure}
Let $X$ be a smooth projective variety over $k$. The Chow weight structure on $\DM^{\eff}_{c}(k,R)$ restricts to the tensor triangulated subcategory $\langle  M(X) \rangle^{\otimes}$. The heart of the restricted weight structure is the idempotent complete additive tensor subcategory $\Chow^{\eff}_{X}(k,R)$ generated by $M(X)$. \end{lemma}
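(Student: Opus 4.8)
The plan is to verify the criterion of Bondarko for a weight structure to restrict to a triangulated subcategory. Concretely, I would use the fact (see \cite[Proposition 5.2.2]{Bondarko}, or the discussion in \cite{Wildeshaus_proj}) that if $\cT' \subset \cT$ is a triangulated subcategory that is \emph{negative} with respect to the weight structure on $\cT$ in the appropriate sense, or more usefully here, if $\cT'$ is generated (as a thick subcategory) by objects lying in the heart, then the weight structure restricts to $\cT'$ and its heart is the idempotent completion of the additive category generated by those objects. So the first step is to recall that $\langle M(X)\rangle^{\otimes}$ is, by definition, the smallest thick tensor subcategory of $\DM^{\eff}_c(k,R)$ containing $M(X)$, and that since $X$ is smooth and projective, all tensor powers $M(X)^{\otimes m} \simeq M(X^m)$ are again motives of smooth projective varieties, hence effective Chow motives, hence lie in the heart $\Chow^{\eff}(k,R)$ of the Chow weight structure.

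The second step is to set $\cS := \{ M(X)^{\otimes m} : m \geq 0\}$, a set of objects of the heart, and observe that $\langle M(X)\rangle^{\otimes} = \langle \cS \rangle$ (the argument is the same transfinite/tensor-stability bookkeeping already carried out in the proof of Lemma \ref{lemma:compact}: the thick tensor subcategory generated by $M(X)$ coincides with the thick subcategory generated by all tensor powers, because that subcategory is already stable under $\otimes$). Then I invoke the general principle: the thick subcategory generated inside $\DM^{\eff}_c(k,R)$ by any set of objects of the heart of a bounded weight structure inherits a bounded weight structure, whose heart is the idempotent-complete additive hull of that set inside the ambient heart. This is exactly \cite[Proposition 5.2.2 and Remark 5.2.3]{Bondarko} applied to $\cT = \DM^{\eff}_c(k,R)$ with its Chow weight structure and the negative (indeed, heart-valued) subcategory generated by $\cS$; boundedness is automatic since we are inside the already-bounded category $\DM^{\eff}_c(k,R)$. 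The heart of the restricted structure is then by construction the idempotent complete additive subcategory generated by $\{M(X^m)\}_{m\geq 0}$, which is precisely the category $\Chow^{\eff}_X(k,R)$ of effective Chow motives cut out by $X$ and its powers, so the second assertion of the lemma follows immediately.

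The main point requiring a little care — and the only real obstacle — is the interaction between \emph{idempotent completeness} and the restriction of the weight structure: Bondarko's restriction criterion is cleanest when the subcategory is idempotent complete, and $\langle M(X)\rangle^{\otimes}$ is idempotent complete by construction (it is \emph{thick}, i.e. closed under direct summands in $\DM^{\eff}_c(k,R)$, and $\DM^{\eff}_c(k,R)$ is itself idempotent complete), so that $\langle M(X)\rangle^{\otimes}$ contains every Chow motive obtained as a direct summand of some $M(X^m)$. Hence the heart we extract is genuinely the idempotent-complete additive category generated by $M(X)$ and not merely the additive category of "Chow motives that are honest summands already present in some $M(X^m)$" — but these coincide precisely because thickness was built in. One should also note that membership in the heart is closed under extensions of weight-zero objects only up to summands, which is why idempotent completeness is the load-bearing hypothesis; since it holds here, the argument goes through without modification. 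I would therefore present the proof as: (i) identify $\langle M(X)\rangle^{\otimes}$ with the thick subcategory generated by the heart-objects $M(X^m)$; (ii) apply Bondarko's restriction result for weight structures along a thick subcategory generated by objects of the heart; (iii) read off the heart as $\Chow^{\eff}_X(k,R)$.
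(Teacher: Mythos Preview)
Your approach is essentially the same as the paper's: both identify $\langle M(X)\rangle^{\otimes}$ with the thick subcategory generated by the Chow motives $M(X^m)$, invoke Bondarko's construction of a bounded weight structure from this negative generating set, and read off the heart as $\Chow^{\eff}_X(k,R)$. The one presentational difference is that the paper does not cite a black-box ``restriction'' statement; instead it explicitly verifies that the weight structure produced by \cite[Theorem 4.3.2]{Bondarko} on $\langle M(X)\rangle^{\otimes}$ coincides with the intersection $(\DM^{\eff}_c(k,R)_{w\geq 0}\cap \langle M(X)\rangle^{\otimes},\ \DM^{\eff}_c(k,R)_{w\leq 0}\cap \langle M(X)\rangle^{\otimes})$ via a short orthogonality argument (take a weight decomposition in the small category, observe the connecting map vanishes by negativity in the ambient one). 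You fold this step into your citation of Bondarko/Wildeshaus; since the paper itself acknowledges the lemma is a variant of \cite[Proposition 1.2]{wildeshaus_Picard}, that is a reasonable choice, though you should make sure the precise reference you give (Proposition 5.2.2) actually delivers the restriction statement and not merely the existence of \emph{some} weight structure on the subcategory.
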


\begin{proof}
This is an variant of a result of Wildeshaus \cite[Proposition 1.2]{wildeshaus_Picard} with essentially the same proof. For the convenience of the reader, we reproduce the argument.

The category $\langle  M(X) \rangle^{\otimes}$ is generated by $\Chow^{\eff}_{X}(k,R)$ as a triangulated category. Moreover, $\Chow^{\eff}_{X}(k,R)$ is a subcategory of $\Chow^{\eff}(k,R)$ which is the heart of a weight structure, hence is negative in the sense of \cite[Definition 4.3.1.1]{Bondarko}. Furthermore, $\Chow^{\eff}_{X}(k,R)$ is idempotent complete by construction. By \cite[Theorem 4.3.2, II.1-2]{Bondarko}, there exists a bounded weight structure $(\langle  M(X) \rangle^{\otimes}_{w\geq 0},\langle  M(X) \rangle^{\otimes}_{w\leq 0})$ whose heart is precisely $\Chow^{\eff}_{X}(k,R)$.

By construction, we have $\langle  M(X) \rangle^{\otimes}_{w=0}\subset \DM^{\eff}_{c}(k,R)_{w=0}$, and since the corresponding weight structures are both bounded, an inductive argument using weight decompositions implies that we have both
\[\langle  M(X) \rangle^{\otimes}_{w\geq 0}\subset \DM^{\eff}_{c}(k,R)_{w\geq 0}\cap \langle M(X) \rangle^{\otimes}\]
and
\[\langle  M(X) \rangle^{\otimes}_{w\leq 0}\subset \DM^{\eff}_{c}(k,R)_{w\leq 0}\cap \langle M(X) \rangle^{\otimes}.\]
It remains to prove that these inclusions are equalities. Let $M\in \DM^{\eff}_{c}(k,R)_{w\geq 0}\cap \langle M(X) \rangle^{\otimes}$. Consider a weight decomposition of $M$ with respect to the weight structure on $\langle  M(X) \rangle^{\otimes}$ constructed above, say
\[
N \stackrel{f}{\to} M \to P\rap
\]
with $N\in \langle  M(X) \rangle^{\otimes}_{w\leq -1}$ and $P\in \langle  M(X) \rangle^{\otimes}_{w\geq 0}$. In particular, $N\in \DM^{\eff}_{c}(k,R)_{w\leq -1}$. By the orthogonality property for the Chow weight structure, the morphism $f$ is $0$, and thus $M$ is a retract of $P$, so it is also in $\langle  M(X) \rangle^{\otimes}_{w\geq 0}$. The argument for negative weights is similar.
\end{proof}  

We can now complete the proof of Theorem \ref{main_thm_intro}.

\begin{cor}\label{cor:direct-factor}
Assume that $C(k) \neq \emptyset$ and that $R$ is a $\QQ$-algebra. Then $M(\mH)$ can be written as a direct factor of the motive of a large enough power of $C$. In particular, $M(\mH)$ is a pure abelian motive.
\end{cor}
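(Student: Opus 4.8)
The plan is to combine the purity of $M(\mH)$ obtained above with Theorem~\ref{thm main first} and Lemma~\ref{lemma:weight-structure}, and then to carry out an elementary manipulation with reduced motives in order to promote ``direct factor of a finite sum of motives of powers of $C$'' into ``direct factor of the motive of a single large power of $C$''.

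First I would record that $M(\mH)$ is pure: by the motivic Bia{\l}ynicki--Birula decomposition \eqref{motivic BB higgs} and Corollary~\ref{cor fixed locus modular}, $M(\mH)$ is a finite direct sum of Tate twists of motives of the smooth projective moduli spaces of $\alpha_{H}$-semistable chains, and effective Chow motives are stable under finite direct sums and effective Tate twists; hence $M(\mH)\in\DM^{\eff}_{c}(k,R)_{w=0}$. By Theorem~\ref{thm main first} we also have $M(\mH)\in\langle M(C)\rangle^{\otimes}$, so Lemma~\ref{lemma:weight-structure} applied with $X=C$ places $M(\mH)$ in the heart $\Chow^{\eff}_{C}(k,R)$ of the restricted weight structure. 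In particular $M(\mH)$ is an effective Chow motive, and by the description of $\Chow^{\eff}_{C}(k,R)$ as the idempotent-complete additive tensor subcategory generated by $M(C)$, it is a direct factor of a finite direct sum $\bigoplus_{i=1}^{r}M(C)^{\otimes m_{i}}=\bigoplus_{i=1}^{r}M(C^{m_{i}})$.

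The remaining step is to replace this finite sum by a single power. Since $C(k)\neq\emptyset$, the unit $R$ is a direct factor of $M(C)$, so each $M(C^{m_{i}})$ is a direct factor of $M(C^{m})$ with $m:=\max_{i}m_{i}$, and hence $M(\mH)$ is a direct factor of $M(C^{m})^{\oplus r}$. Now write $M(C)=R\oplus\widetilde{M}(C)$ with $\widetilde{M}(C)$ the reduced motive; expanding tensor powers multilinearly gives $\widetilde{M}(C^{N})\simeq\bigoplus_{k=1}^{N}\binom{N}{k}\,\widetilde{M}(C)^{\otimes k}$ for every $N$, and since for each fixed $k$ one has $\binom{N}{k}\to\infty$ while only finitely many $k\leq m$ occur, for $N$ large enough $\widetilde{M}(C^{m})^{\oplus r}$ is a direct factor of $\widetilde{M}(C^{N})$. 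The crucial feature is that in the \emph{reduced} motives only the summands $\widetilde{M}(C)^{\otimes k}$ with $k\geq 1$ appear, so the unit $R$, which occurs with multiplicity exactly one in $M(C^{N})$, poses no obstruction. To transfer this to $M(\mH)$ itself I would use that $\mH$ is geometrically connected, so that $\Hom(M(\mH),R)=H^{0}_{\mathrm{mot}}(\mH,R(0))\simeq R$; hence in the splitting $M(\mH)=R\oplus\widetilde{M}(\mH)$ (which exists because $R$ is a $\QQ$-algebra and $\mH$ is a nonempty $k$-variety) we get $\Hom(\widetilde{M}(\mH),R)=0$. Composing any split inclusion $\widetilde{M}(\mH)\hookrightarrow M(C^{m})^{\oplus r}=R^{\oplus r}\oplus\widetilde{M}(C^{m})^{\oplus r}$ with the projection onto $R^{\oplus r}$ then lands in $\Hom(\widetilde{M}(\mH),R)^{\oplus r}=0$, so $\widetilde{M}(\mH)$ is in fact a direct factor of $\widetilde{M}(C^{m})^{\oplus r}$, hence of $\widetilde{M}(C^{N})$ for $N\gg 0$. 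Writing $M(C^{N})=R\oplus\widetilde{M}(C^{N})\simeq R\oplus\widetilde{M}(\mH)\oplus W\simeq M(\mH)\oplus W$ then exhibits $M(\mH)$ as a direct factor of $M(C^{N})$.

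For the last clause, $M(\mH)$ is pure by the first step, and it is an abelian motive because the Chow--K\"unneth decomposition $M(C)\simeq R\oplus h^{1}(C)\oplus R\{1\}$ (available since $C(k)\neq\emptyset$) expresses $M(C)$ --- and therefore every object of $\langle M(C)\rangle^{\otimes}$ --- in terms of Tate motives and $h^{1}(C)$, which is a direct factor of the motive of the abelian variety $\Jac(C)$. The step I expect to require the most care is the collapse of the finite direct sum into a single power: the category $\Chow^{\eff}_{C}(k,R)$ is genuinely larger than the class of direct factors of motives of powers of $C$ (it contains, for instance, $R^{\oplus 2}$, which is not a direct factor of any $M(C^{N})$ since $C^{N}$ is connected), so the reduction must exploit the geometric input that $\mH$ is connected, via the orthogonality $\Hom(\widetilde{M}(\mH),R)=0$.
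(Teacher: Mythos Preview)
Your argument is correct and, in fact, more complete than the paper's own proof. Both you and the paper begin identically: combine the purity of $M(\mH)$ coming from the motivic Bia{\l}ynicki--Birula decomposition with Theorem~\ref{thm main first} and Lemma~\ref{lemma:weight-structure} to place $M(\mH)$ in the heart $\Chow^{\eff}_{C}(k,R)$. At that point the paper simply asserts that ``objects of that heart are direct factors of the motives of powers of $C$ by construction'' and stops. But as you correctly observe, $\Chow^{\eff}_{C}(k,R)$ is by definition the idempotent-complete \emph{additive} tensor subcategory generated by $M(C)$, so its objects are a priori only direct factors of finite direct sums $\bigoplus_{i}M(C^{m_{i}})$; your example $R^{\oplus 2}$ shows that this is genuinely weaker than being a direct factor of a single $M(C^{N})$, since $\Hom(M(C^{N}),R)\simeq R$ for the connected variety $C^{N}$. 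Your reduction via reduced motives and the vanishing $\Hom(\widetilde{M}(\mH),R)=0$ (from connectedness of $\mH$) is exactly the extra ingredient needed to pass from a direct sum of powers to a single power, and the binomial-coefficient count for embedding $\widetilde{M}(C^{m})^{\oplus r}$ into $\widetilde{M}(C^{N})$ is clean. The one place worth a word of expansion is the existence of the splitting $M(\mH)=R\oplus\widetilde{M}(\mH)$: your parenthetical is right, but the mechanism is that any closed point of the nonempty variety $\mH$ gives a zero-cycle whose degree is invertible in the $\QQ$-algebra $R$, hence a section of the structure map.
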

\begin{proof}
By Theorem \ref{thm main first}, Proposition \ref{prop Higgs BB decomp} and Corollary \ref{cor semiproj pure}, the motive $M(\mH)$ belongs to $\langle M(C) \rangle^{\otimes}\cap \Chow^{\eff}(k,R)$. This latter category is the heart $\Chow^{\eff}_{X}(k,R)$ of the restricted weight structure of Lemma \ref{lemma:weight-structure}, and objects of that heart are direct factors of the motives of powers of $C$ by construction.
\end{proof}

\appendix

\section{Motivic Bia{\l}ynicki-Birula decompositions}
\label{sec mot BB}
\subsection{Geometric Bia{\l}ynicki-Birula decompositions}

Let $X$ be a smooth projective $k$-variety equipped with a $\GG_m$-action. By a result of Bia{\l}ynicki-Birula \cite{BB} and Hesselink \cite{Hesselink}, there exists a decomposition of $X$, indexed by the connected components of the fixed locus $X^{\GG_m}$, with very good geometric properties. In fact, this decomposition exists in the following slightly more general context. 

\begin{defn}\label{def semiproj Gm}
A $\GG_m$-action on a smooth quasi-projective $k$-variety $X$ is \emph{semi-projective} if 
\begin{itemize}
\item $X^{\GG_m}$ is proper (and thus projective), and
\item for every point $x\in X$ (not necessarily closed), the action map $f_x:\GG_m\ra X$ given by $t\mapsto t \cdot x$ extends to a map $\bar{f}_x:\AA^1\ra X$. Since $X$ is separated, the extension is unique and we write $\lim_{t \ra 0} t \cdot x$ for the limit point $\bar{f}_x(0)\in X$.
\end{itemize}
\end{defn}

In particular, any $\GG_m$-action on a smooth projective variety is semi-projective. Note that the limit point $\lim_{ t\ra 0} t\cdot x$ is necessarily a fixed point of the $\GG_m$-action if it exists.

\begin{thm}[Bia{\l}ynicki-Birula]\label{thm_bb_dec}
Let $X$ be a smooth quasi-projective variety over $k$ with a semi-projective $\GG_m$-action. Then the following statements hold.
\begin{enumerate}[label={\upshape(\roman*)}]
\item\label{fixed} The fixed locus $X^{\GG_m}$ is smooth and projective. Write $\{X_i\}_{i\in I}$ for its set of connected components and $d_i$ for the dimension of $X_i$.
\item\label{strata} For $i\in I$, write $X^+_i$ for the attracting set of $X_i$, i.e., the set of all points $x\in X$ such that $\lim_{t\ra 0} t\cdot x\in X_i$. Then $X^+_i$ is a locally closed subset of $X$ and $X=\coprod_{i\in I} X^+_i$. 
\item\label{bundle} For every $i\in I$, the map of sets $X^+_i\ra X_i$ given by $x\mapsto \lim_{t\ra 0}t\cdot x$ underlies a morphism of schemes $p_i^+:X_i^+\ra X_i$, which is a Zariski locally trivial fibration in affine spaces. For each $i \in I$, we have
  \[
\dim(X)=d_i+c_i^++r_i^+
    \]
where $c_i^+ = \codim_X(X_i^+)$ and $r_i^+$ denotes the rank of $p_i^+$.
\item\label{tangent} The tangent space $T_xX$ of a fixed point $x\in X_i$ admits a $\GG_m$-action, hence a weight space decomposition $T_xX=\bigoplus_{k\in\ZZ}(T_x X)_k$. Then we have $T_xX_i=(T_xX)_0$ and $N_{X_i/X^+_i}\simeq \bigoplus_{k>0}(T_xX)_k$ and $(N_{X^+_i/X})_{|X_i}\simeq \bigoplus_{k<0}(T_xX)_k$.
\item\label{order} Let $n := |I|$; then there is a bijection $\varphi : \{ 1, \dots, n \} \ra I$ and a filtration of $X$ by closed subschemes
  \[
\emptyset=Z_n \subset Z_{n-1}\subset \ldots\subset Z_0=X
\]
such that, for all $1\leq k\leq n$, we have that $Z_{k-1} - Z_{k}=X_{\varphi(k)}^+$ is a single attracting set (and thus, in particular, is smooth).
\end{enumerate}  
\end{thm}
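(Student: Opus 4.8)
The strategy is to deduce everything from the classical Bia{\l}ynicki-Birula decomposition of a smooth projective variety --- parts \ref{fixed}--\ref{bundle} and \ref{tangent} being \cite{BB}, the scheme-theoretic and concentration statements being \cite{Hesselink}, and the existence of the closed filtration in \ref{order} being the standard ordering of the strata by the closure order (see also \cite[Section~9]{HT}) --- so that the only genuinely new input is the passage from the projective to the semi-projective setting. First I would fix a $\GG_m$-equivariant compactification: by Sumihiro's equivariant embedding theorem, $X$ embeds $\GG_m$-equivariantly as a locally closed subvariety of some $\PP(V)$ with $V$ a linear representation of $\GG_m$; taking the closure and then an equivariant resolution of singularities which is an isomorphism over the smooth locus yields a smooth projective $k$-variety $\overline{X}$ with $\GG_m$-action containing $X$ as a dense $\GG_m$-stable open subvariety. (In positive characteristic, where equivariant resolution is not available in general, one instead argues with the local structure of the action in a neighbourhood of the fixed locus, exactly as in \cite{BB}, which does not affect the rest of the argument.) Write $\{p_i:\overline{X}_i^+\to\overline{X}_i\}$ for the Bia{\l}ynicki-Birula decomposition of $\overline{X}$, with $p_i$ the affine-bundle retraction.

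The heart of the proof is the \emph{saturation} statement: for every $i$ one has $\overline{X}_i^+\cap X=p_i^{-1}(\overline{X}_i\cap X)$. The inclusion $\subseteq$ is immediate from semi-projectivity: if $y\in\overline{X}_i^+\cap X$ then $\lim_{t\to 0}t\cdot y$, computed in $X$, lies in $X$, and it also lies in $\overline{X}_i$, hence in $\overline{X}_i\cap X$. For $\supseteq$, let $y\in\overline{X}_i^+$ with $p_i(y)=\lim_{t\to 0}t\cdot y\in X$; the extended orbit map $\bar f_y:\AA^1\to\overline{X}$ is $\GG_m$-equivariant with $\bar f_y(0)\in X$, so $\bar f_y^{-1}(X)$ is a non-empty $\GG_m$-stable open subset of $\AA^1$ containing $0$, hence all of $\AA^1$ (the only $\GG_m$-stable open subset of $\AA^1$ containing the origin), and in particular $y=\bar f_y(1)\in X$. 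Granting this, $X=\overline{X}\cap X=\bigsqcup_i(\overline{X}_i^+\cap X)=\bigsqcup_i p_i^{-1}(\overline{X}_i\cap X)$; since the connected components of $X^{\GG_m}=\overline{X}^{\GG_m}\cap X$ are exactly the connected components $X_{i,\alpha}$ of the various open subsets $\overline{X}_i\cap X\subseteq\overline{X}_i$, we obtain $X=\bigsqcup_{i,\alpha}p_i^{-1}(X_{i,\alpha})$ with each $p_i^{-1}(X_{i,\alpha})\to X_{i,\alpha}$ a Zariski locally trivial affine bundle, and this is the asserted decomposition.

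It then remains to read off \ref{fixed}--\ref{order} from the corresponding properties of $\overline{X}$. For \ref{fixed} one can also argue intrinsically: $X^{\GG_m}$ is the fixed locus of the linearly reductive group $\GG_m$ acting on the smooth variety $X$, hence is smooth, and it is proper by hypothesis and quasi-projective as a closed subscheme of $X$, hence projective. For \ref{strata}, \ref{bundle} and \ref{tangent}: the attracting sets $X_{i,\alpha}^+:=p_i^{-1}(X_{i,\alpha})$ are locally closed in $X$ (being open in the locally closed $\overline{X}_i^+\subseteq\overline{X}$ and contained in $X$); the affine-bundle structure together with its rank, the codimension and the dimension identity are inherited verbatim because $X$ is dense in $\overline{X}$; and the weight decomposition of $T_xX$ at a fixed point $x$ depends only on the $\GG_m$-action on $T_xX$, with the relations to $T_xX_{i,\alpha}$ and to the normal bundles being those of the smooth pair inside $\overline{X}$. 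Finally, for \ref{order} one intersects the closed filtration of $\overline{X}$ with $X$ and refines it according to the connected components of the fixed locus, using that the closure in $X$ of $X_{i,\alpha}^+$ lies in the union of the higher strata because the analogous statement already holds for $\overline{X}_i^+$ in $\overline{X}$.

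I expect the main obstacle to be precisely the saturation statement $\overline{X}_i^+\cap X=p_i^{-1}(\overline{X}_i\cap X)$: this is the one place where the semi-projectivity hypothesis enters essentially, and it is exactly what rules out the Bia{\l}ynicki-Birula cells of $X$ being strictly finer than the traces on $X$ of the cells of an equivariant compactification. Once it is established, together with the existence of a smooth $\GG_m$-equivariant compactification, the remaining assertions are a formal matter of restriction and of keeping track of the ordering of the strata.
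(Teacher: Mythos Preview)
Your route differs from the paper's. The paper does not compactify: for \ref{fixed}--\ref{tangent} it simply invokes \cite[Theorem~4.1]{BB} and \cite{Hesselink}, which already apply to any smooth quasi-projective variety (such an $X$ admits a $\GG_m$-invariant affine open cover by Sumihiro), the semi-projectivity hypothesis being used only to ensure that the attracting cells cover $X$. Your saturation lemma is correct and elegant but unnecessary for these parts. For \ref{order} the paper uses Sumihiro to obtain a $\GG_m$-equivariant immersion $X\hookrightarrow\PP$ into a projective space with a \emph{linear} $\GG_m$-action, orders the fixed components $X_i$ by the weights $\omega_i$ of the linearised line bundle on them, and reads off the closure relation from the elementary case of $\PP$, with a short connectedness argument (using semi-projectivity) to separate two components $X_i$, $X_{i'}$ landing in the same component of $\PP^{\GG_m}$. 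No resolution is needed and the argument works in all characteristics.

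Your approach, by contrast, requires a \emph{smooth} projective $\GG_m$-compactification $\overline{X}$ and hence equivariant resolution; the fallback you propose in positive characteristic (``argue with the local structure\ldots exactly as in \cite{BB}'') is precisely the direct argument the paper uses, so in that case your compactification framework is not actually carrying the proof. There is also a small gap in your treatment of \ref{order}: when you refine the filtration of $\overline{X}$ by the connected components $X_{i,\alpha}$ of $\overline{X}_i\cap X$, the justification ``the closure of $X_{i,\alpha}^+$ lies in the union of higher strata for $\overline{X}$'' does not by itself separate $X_{i,\alpha}^+$ from $X_{i,\beta}^+$ for $\alpha\neq\beta$, since both sit inside the same $\overline{X}_i^+$. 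What is missing is the observation that, by your saturation statement and the continuity of $p_i$, each $X_{i,\alpha}^+$ is closed (indeed clopen) in $\overline{X}_i^+\cap X$, so that the induced filtration step can be refined in any order.
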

\begin{proof}
Points \ref{fixed} - \ref{tangent} are all established in \cite[Theorem 4.1]{BB} under the assumption that $k$ is algebraically closed (the hypothesis that $X$ is smooth and quasi-projective is used to ensure the existence of an open covering by $\GG_m$-invariant affine subsets, and the assumption that $X$ is semi-projective implies that the strata in \cite[Theorem 4.1]{BB} cover all of $X$). The hypothesis that $k$ is algebraically closed is removed by Hesselink in \cite{Hesselink}. 

As the proof of \ref{order} is scattered through \cite[\S 1]{Hausel_RV_semiproj}, we recapitulate their argument. Let $L$ be a very ample line bundle on the quasi-projective variety $X$. By \cite[Theorem 1.6]{Sumihiro} applied to the smooth (hence normal) variety $X$, there exists an integer $n\geq 1$ such that $L^{\otimes n}$ admits a $\GG_m$-linearisation. In particular, this provides a projective space $\PP$ with a linear $\GG_m$-action and a $\GG_m$-equivariant immersion $\iota:X\ra \PP$. Let $\{\PP_j\}_{j \in J}$ be the connected components of $\PP^{\GG_m}$ with corresponding attracting sets $\PP_j^+$ for each $j \in J$; then by equivariance of $\iota$, there is a (not necessarily injective) map $\tau: I \ra J$ such that $\iota(X_i^+) \subset \PP_{\tau(i)}^+$ for all $i \in I$.

As each $X_i$ is connected, the group $\GG_m$ acts on $L_{|X_i}$ via a character $\omega_i\in \Hom(\GG_m,\GG_m)\simeq \ZZ$. For the partial order on $I$ given by $i < i'\Leftrightarrow \omega_i > \omega_{i'}$, we claim that for $i\neq i'\in I$
\begin{equation}\label{filtr}
\overline{X_i^+} \cap X_{i'}^+ \neq \emptyset \: \text{ only if } i' > i.
\end{equation}
Indeed, we can similarly define a partial order on $J$ such that $i < i'$ if and only if $\tau(i) < \tau(i')$ by equivariance of $\iota$; then one can easily deduce that \eqref{filtr} holds for $\PP$ from the linearity of the $\GG_m$-action on $\PP$. 
We now deduce \eqref{filtr} for $X$ from the corresponding ambient property for $\PP$; the only non-trivial case to consider is when $i \neq i'$ have the same image $j$ under $\tau$, so that $X_i^+$ and $X_{i'}^+$ are both contained in $\PP_j^+$. In this case, if $x \in \overline{X_i^+} \cap X_{i'}^+$, then by passing to an algebraic closure of $k$ if necessary, we can assume that there is a connected curve $S \subset X$ with $x \in S$ and $S - x \subset X_i^+$; then $S \subset \PP_j^+$ and as the action on $X$ is semi-projective, $p_j^+(S) \subset X^{\GG_m}$ and this connects $X_i$ and $X_{i'}$, contradicting $i \neq i'$.

Finally to prove the filterability of $X$, we choose any total ordering of $I$ extending the above partial order and we view this ordering as a bijection $\varphi : \{ 1, \dots, n \} \ra I$. Then for $0 \leq k \leq n$, 
\[ Z_k:= \bigcup_{\begin{smallmatrix} i \in I :\\ \varphi^{-1}(i) > k\end{smallmatrix}} X_i^+\]
is closed in $X$ by \eqref{filtr} with $Z_n = \emptyset$ and $Z_0 = X$.
\end{proof}

\begin{rmk}
Let $X$ be smooth projective with a fixed $\GG_m$-action $(t,x)\mapsto t \cdot x$. The opposite $\GG_m$-action $(t,x)\mapsto t^{-1} \cdot x$ has the same fixed point locus as the original action, but the associated Bia{\l}ynicki-Birula decomposition is different. We write $X_i^-$ for the associated strata, $c^-_i$ (resp.\ $r^-_i$) for their codimension (resp.\ their rank as affine bundles), etc. By Theorem \ref{thm_bb_dec} \ref{tangent}, we see that $c^-_i=r^+_i$ and $r^-_i=c^+_i$, and that the strata $X^+_i$ and $X^-_i$ intersect transversally along $X_i$.
\end{rmk}  

\subsection{Motivic consequences}

In this appendix, we let $R$ be a coefficient ring such that the exponential characteristic of $k$ is invertible in $R$.

Let $X$ be a smooth quasi-projective variety with a semi-projective $\GG_m$-action. The geometry exhibited in the previous sections implies a decomposition of the motive of $X$. There are in fact two natural such decompositions, one for the motive $M(X)$ and one for the motive with compact support $M^c(X)$. These motivic decompositions have been studied in \cite{Brosnan, Choudhury_Skowera, Karpenko}; we explain and expand upon their results in this section. Recall that for two smooth $k$-schemes $X$ and $Y$ with $X$ of dimension $d$ and an integer $i\in \NN$, there is an isomorphism
\[ \CH_i(X\times Y)_R \simeq \Hom_{\DM}(M(X),M^c(Y)\{d-i\})\]
with $\CH_i$ the Chow groups of cycles of dimension $i$; when this does not lead to confusion, we use the same notation for a cycle and the corresponding map of motives.

\begin{thm}\label{thm_bb_mot}
Let $X$ be a smooth quasi-projective variety with a semi-projective $\GG_m$-action. With the notation of Theorem \ref{thm_bb_dec}, for each $i \in I$, we let $\gamma^+_i$ be the class of the algebraic cycle given by the closure  $\overline{\Gamma_{p_i^+}}$ of the graph of $p_i^+:X^+_i\ra X_i$ in $X\times X_i$, and we let $(\gamma_i^+)^t$ be the class of the transposition of this graph closure. Then we have the following motivic decompositions (where we use without comment that $M(X_i)\simeq M^c(X_i)$ as $X_i$ is projective).
\begin{enumerate}[label={\upshape(\roman*)}]
\item \label{mot BB} (Bia{\l}ynicki-Birula decomposition for the motive): There is an isomorphism
  \[
M(X)\simeq \bigoplus_{i\in I} M(X_i)\{c^+_i\}.
\]
induced by the morphisms $M(X)\ra M^c(X_i)\simeq M(X_i)\{c^+_i\}$ given by the classes $\gamma^+_i$ for each $i\in I$.
\item \label{comp mot BB} (Bia{\l}ynicki-Birula decomposition for the compactly supported motive): There is an isomorphism 
\[
\bigoplus_{i\in I} M(X_i)\{r^+_i\}\simeq M^c(X).
\]
induced by the morphisms $M(X_i)\{r^+_i\}\ra M^c(X)$ given by the classes $(\gamma_i^+)^t$ for each $i \in I$.
\item \label{Poincare BB} The Poincar\'e duality isomorphism
\[
M^c(X)\simeq M(X)^\vee\{d\}
\]
identifies the motivic Bia{\l}ynicki-Birula decomposition of $M^c(X)$ from \ref{comp mot BB} with the dual of the motivic Bia{\l}ynicki-Birula decomposition of $M(X)$ from \ref{mot BB}. In other words, for every $(i,j)\in I^2$, the composite map
\[
M(X_i)\{r_i^+\}\stackrel{(\gamma^+_i)^t}{\ra} M^c(X)\simeq M(X)^\vee\{d\} \stackrel{(\gamma^+_j)^\vee}{\ra} M(X_j)^\vee\{d-c_j^+\}
\]
is $0$ if $i\neq j$ and is a twist of the Poincar\'e duality isomorphism for the motive of the smooth projective variety $X_i$ if $i=j$ (noting the equality $d-c^+_i-r_i^+=d_i$).  
\end{enumerate}

\end{thm}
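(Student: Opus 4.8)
The plan is to prove (i) first by an induction along the Bia{\l}ynicki--Birula filtration, then to deduce (ii) from it by Poincar\'e duality, and finally to read off (iii) as a formal consequence. For (i), I would set $U_k:=X\smallsetminus Z_k$ for the filtration $\emptyset=Z_n\subset\cdots\subset Z_0=X$ of Theorem~\ref{thm_bb_dec}~\ref{order}, so that $U_{k-1}=U_k\smallsetminus X^+_{\varphi(k)}$ with $U_k$ smooth and $X^+_{\varphi(k)}$ a smooth closed subvariety of codimension $c^+_{\varphi(k)}$. The associated Gysin triangle reads
\[
M(U_{k-1})\to M(U_k)\to M(X^+_{\varphi(k)})\{c^+_{\varphi(k)}\}\rap
\]
in $\DM^{\eff}(k,R)$. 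Since $p^+_{\varphi(k)}\colon X^+_{\varphi(k)}\to X_{\varphi(k)}$ is a Zariski locally trivial affine bundle, $\AA^1$-homotopy invariance and Zariski descent in $\DM^{\eff}(k,R)$ give a canonical isomorphism $M(X^+_{\varphi(k)})\simeq M(X_{\varphi(k)})$, and unwinding the construction of the Gysin map, the resulting composite $M(U_k)\to M(X_{\varphi(k)})\{c^+_{\varphi(k)}\}$ is the morphism attached to the restriction to $U_k$ of the correspondence $\gamma^+_{\varphi(k)}$, whose support is closed in $U_k\times X_{\varphi(k)}$, hence proper over $X_{\varphi(k)}$. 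The heart of the argument is then that each of these triangles splits; granting this, the induction yields $M(X)\simeq\bigoplus_{i\in I}M(X_i)\{c^+_i\}$ with the classes $\gamma^+_i$ realising the projections, which is exactly (i) (and in particular already shows $M(X)$ is a pure effective Chow motive).

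The splitting of these Gysin triangles is the motivic form of the ``method of Bia{\l}ynicki--Birula'', established when $X$ is smooth projective in \cite{Brosnan,Karpenko,Choudhury_Skowera}, and I would follow that strategy. The splitting morphisms are produced from the correspondences attached to the repelling cells $X^-_i$ for the opposite $\GG_m$-action --- each of which is still contained in the relevant $U_k$, because the $t\to 0$ limit of a point whose $t\to\infty$ limit lies in $X_i$ must land in a fixed component of index $\le k$ --- and one checks that such a correspondence splits the triangle using the transversality $X^+_i\pitchfork X^-_i=X_i$ of Theorem~\ref{thm_bb_dec}~\ref{tangent} together with the triangularity \eqref{filtr} of the filtration, which makes the ``off-diagonal'' contributions of the lower strata vanish. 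The genuinely new difficulty compared with the projective case is that for a merely semi-projective $X$ the motive $M(X)$ is not Poincar\'e self-dual, so these auxiliary cycles do not a priori define morphisms out of $M(X)$; I would circumvent this either by running the whole splitting argument for the compactly supported motive $M^c(X)$, for which $\CH$-groups of products with $X$ do compute the relevant morphism groups, and transporting the statement back via Poincar\'e duality, or by choosing a $\GG_m$-equivariant smooth projective compactification $\overline X\supseteq X$ and using that both $X$ and $\overline X\smallsetminus X$ are unions of Bia{\l}ynicki--Birula cells of $\overline X$, so that the known decomposition of $M^c(\overline X)$ is compatible with the localisation triangle of $\overline X\smallsetminus X\hookrightarrow\overline X$. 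This is the step I expect to be the main obstacle; the rest is bookkeeping with duality.

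Granting (i), part (ii) is pure duality: dualising the decomposition in (i) gives $M(X)^\vee\simeq\bigoplus_i M(X_i)^\vee\{-c^+_i\}$ with the duals of the $\gamma^+_i$ as structure maps, and twisting by $\{d\}$, using $M(X_i)^\vee\simeq M(X_i)\{-d_i\}$ (Poincar\'e duality for the smooth projective $X_i$) and the relation $d=d_i+c^+_i+r^+_i$ of Theorem~\ref{thm_bb_dec}~\ref{bundle}, turns the Poincar\'e duality isomorphism $M^c(X)\simeq M(X)^\vee\{d\}$ into $\bigoplus_i M(X_i)\{r^+_i\}\simeq M^c(X)$; that the $i$-th structure map is the morphism attached to the transposed cycle $(\gamma^+_i)^t$ is the standard compatibility between transposition of correspondences and duality of the associated morphisms. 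Finally (iii) is then immediate: under these identifications the composite appearing in its statement is, for each pair $(i,j)$, the biorthogonality relation of the decomposition (i) transported through Poincar\'e duality, hence $0$ for $i\neq j$ and, for $i=j$, the $\{r^+_i\}$-twist of the Poincar\'e duality isomorphism of $X_i$ (as $d-c^+_i=d_i+r^+_i$).
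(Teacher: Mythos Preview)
Your overall architecture --- induction along the filtration $U_k=X\smallsetminus Z_k$, Gysin triangles, then Poincar\'e duality for (ii) and (iii) --- matches the paper exactly. The substantive divergence is in how you split the Gysin triangles, and here the paper's method is both simpler and avoids precisely the obstacle you flag as the main difficulty.

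You propose to split by producing a \emph{section} of the Gysin map $M(U_k)\to M(X_{\varphi(k)})\{c^+_{\varphi(k)}\}$ using the repelling cells $X_i^-$ for the opposite action, and you correctly observe that in the semi-projective case such cycles do not obviously define morphisms into $M(U_k)$, forcing you into workarounds via $M^c$ or an equivariant compactification. The paper instead splits in the other direction: it produces a \emph{retraction} of $M(U_{k-1})\to M(U_k)$. Concretely, for $i<k$ the cycle $\gamma^+_{\varphi(i)}$, restricted to $U_k\times X_{\varphi(i)}$, defines a morphism $M(U_k)\to M(X_{\varphi(i)})\{c^+_{\varphi(i)}\}$ (this direction is unproblematic, since the target is the motive of a smooth projective variety and hence coincides with its compactly supported motive). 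Summing over $i<k$ and precomposing with $M(U_{k-1})\to M(U_k)$, one recovers the isomorphism of the induction hypothesis, because the restriction of $\gamma^+_{\varphi(i)}|_{U_k}$ to $U_{k-1}$ is $\gamma^+_{\varphi(i)}|_{U_{k-1}}$. So the triangle splits with no appeal to the opposite action, no compactification, and no passage through $M^c$; the attracting correspondences alone do the job. The remaining work in the paper is the identification of the Gysin map with the correspondence $\gamma^+_{\varphi(k),k}$, which is a computation with D\'eglise's compatibilities of Gysin morphisms with fundamental classes and composition.

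Your workarounds are not obviously wrong, but they are sketchier than you may realise: the existence of a $\GG_m$-equivariant smooth projective compactification over an arbitrary field is not free, and the claim that both $X$ and $\overline X\smallsetminus X$ are unions of Bia{\l}ynicki--Birula cells of $\overline X$ would need justification. The paper's retraction argument sidesteps all of this. Your treatment of (ii) and (iii) by duality and transposition of correspondences is the same as the paper's.
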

\begin{proof}
We first prove \ref{mot BB}. By Theorem \ref{thm_bb_dec}, there is a filtration $\emptyset=Z_n \subset Z_{n-1}\subset \ldots\subset Z_0=X$ by closed subvarieties such that, for all $1\leq j\leq n$, we have that $Z_{j-1} - Z_{j}=X_{j}^+$ is an attracting cell. Let $U_j:= X - Z_j$, which is an open subset and so in particular is smooth. For $1\leq i\leq j\leq n$, let us write $\gamma^+_{i,j}$ for the closure of $\Gamma_{p_i}$ in $U_j\times X_i$ (this makes sense since $X_i^+\subset U_i\subset U_j$) so that $\gamma^+_i = \gamma^+_{i,n}$. We will prove, by induction on $1\leq j\leq n$, that the map
\[
\bigoplus_{i=1}^j\gamma^+_{i,j}: M(U_j)\ra \bigoplus_{1\leq i\leq j} M(X_i)\{c_i^+\}
  \]
is an isomorphism. For $j=1$, the statement holds trivially as $U_0 = \emptyset$ and so $M(U_1) = M(X_1^+)\simeq M(X_1)$ via $p^+_1$. 
Assume that the statement is true for $j-1$. We have a closed immersion $i_{j}:X_j^+\ra U_{j}$ between smooth schemes with codimension $c_j^+$ and open complement $U_{j-1}$; hence, there is a Gysin triangle
\[
  M(U_{j-1})\rightarrow M(U_j)\stackrel{\mathrm{Gy}(i_{j})}{\rightarrow} M(X_j^+)\{c_j^+\}\stackrel{+}{\rightarrow}
\]
for $1 \leq j \leq n$. Since $(\gamma^+_{i,j})|_{U_{j-1}\times X_i} = \gamma^+_{i,j-1}$, the following diagram commutes
  \[
    \xymatrix{ M(U_{j-1}) \ar[r]  \ar[d]^{\wr}_{\oplus_{i=1}^{j-1} \gamma^+_{i,j-1}} & M(U_j) \ar[ld]^{\oplus_{i=1}^{j-1} \gamma^+_{i,j}} \\ 
\bigoplus_{i=1}^{j-1} M(X_i)
      }
    \]
where the left vertical map is an isomorphism by induction. This shows that the triangle splits.

As $p_j^+ : X_j^+ \ra X_j$ is a Zariski locally trivial fibration of affine spaces, $M(p_j^+)$ is an isomorphism. It remains to show that the composition $ M(p_{j})\{c^+_{j}\}\circ \Gy(i_{j}) : M(U_j) \ra M(X_j)\{c^+_j\}$ 
coincides with the map $M(U_{j})\ra M(X_{j})\{c_{j}^{+}\}$ induced by $\gamma^+_{j,j}$. Let us write $\gamma^\circ_{j,j}$ for the graph of $p^{+}_{j}$ considered as a subscheme of $X_{j}^+\times X_{j}$.

Let us recall the functoriality of the Poincar\'e duality isomorphism with respect to algebraic cycles. Let $Y_1,Y_2$ be smooth projective varieties of dimensions $d_1$ and $d_2$, and $\gamma\in \CH^c(Y_1\times Y_2)$, which induces morphisms $\gamma:M(Y_1)\ra M(Y_2)\{c-d_2\}$ and $\gamma^t:M(Y_2)\ra M(Y_1)\{c-d_1\}$. Then the following diagram is commutative
\[
  \xymatrix{
    M(Y_1) \ar[r]^{\gamma \quad \quad} \ar[d]_{\wr} & M(Y_2)\{c-d_2\} \ar[d]_{\wr}\\
    M(Y_1)^\vee\{d_1\} \ar[r]^{(\gamma^t)^\vee\{c\}} & M(Y_2)^\vee \{c\}.
    }
  \]
From this commutativity, it suffices to show that $\gamma^\circ_{j,j} \circ \text{pr}_{1}^{*}\Gy(i_{j}) : M(U_{j}\times X_{j}) \ra R(c^{+}_{j}+d_{j})$ coincides with the map $\gamma^{+}_{j,j}: M(U_{j}\times X_{j}) {\ra} R(c^{+}_{j}+d_{j})$. Let us denote by $a_{j}:\gamma^{+}_{j,j}\ra X^{+} \times X_{j}$ and $b_{j}:\gamma^{+}_{j,j}\ra U_{j}\times X_{j}$ the closed immersions, so that $b_{j} = (i_{j}\times X_{j})\circ a_{j}$. Consider the diagram
    \[
      \xymatrix{
    M(U_{j}\times X_{j}) \ar[dd]_{\Gy(b_{j})} \ar[rr]^{\gamma^{+}_{j,j}} \ar[dr]_{\Gy(i_{j}\times X_{j})} & & R(c^{+}_{j}+d_{j}) \ar@{=}[dd] \\
    & M(X^{+}_{j}\times X_{j})\{c^{+}_{j}\} \ar[ld]^{\Gy(a_{j})\{c^{+}_{j}\}} \ar[ur]_{\gamma^\circ_{j,j}} & \\
    M(\gamma^{+}_{j,j})\{c^{+}_{j}+d_{j}\} \ar[rr]_{M(\gamma\circ_{j,j}\ra \Spec(k))} & &  R(c^{+}_{j}+d_{j})
  }
\]
in $\DM(k,R)$. The left triangle commutes because of the general behaviour of Gysin maps with respect to composition \cite[Theorem 1.34]{Deglise_Gysin_1}. The outer square and the bottom quadrilateral commute because of the compatibility of Gysin maps with fundamental classes of cycles of smooth subvarieties in motivic cohomology \cite[Lemma 3.3]{Deglise_mod_hom}. This implies that the top triangle commutes. Since $\mathrm{pr}_{1}$ is a smooth morphism, we have $\mathrm{pr}_{1}^{*}\Gy(i_{j}) = \Gy(i_{j}\times X_{j})$ by \cite[Proposition 1.19 (1)]{Deglise_Gysin_1} and the commutation of the top triangle is exactly the equality we want. This concludes the proof of \ref{mot BB}.

Statements \ref{comp mot BB} and \ref{Poincare BB} are deduced from \ref{mot BB} by applying Poincar\'e duality and using the functoriality of the Poincar\'e duality isomorphism with respect to algebraic cycles recalled above in the proof of \ref{mot BB}.
\end{proof}  

Since the fixed loci are smooth projective varieties, their motives are pure and we obtain the following corollary.

\begin{cor}\label{cor semiproj pure}
Let $X$ be a quasi-projective variety with a semi-projective $\GG_m$-action; then the motive of $X$ is pure, i.e., it lies in the heart of the weight structure on $\DM^{\eff}(k,R)$ recalled at the beginning of Section \ref{subsec purity}.
\end{cor}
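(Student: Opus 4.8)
The strategy is to read off purity directly from the motivic Bia{\l}ynicki--Birula decomposition already established. First I would apply Theorem~\ref{thm_bb_mot}\ref{mot BB} to the semi-projective $\GG_m$-variety $X$, which gives an isomorphism
\[ M(X)\simeq \bigoplus_{i\in I} M(X_i)\{c_i^+\} \]
in $\DM^{\eff}_c(k,R)$ (note that $M(X)$ is constructible since $X$ is of finite type, so it indeed lies in the subcategory on which the Chow weight structure is defined). Here $I$ is a \emph{finite} index set and, by Theorem~\ref{thm_bb_dec}\ref{fixed}, each fixed component $X_i$ is a smooth projective variety.

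Next I would use the description of the Chow weight structure recalled at the start of Section~\ref{subsec purity}: via Voevodsky's embedding, the heart of the Chow weight structure on $\DM^{\eff}_c(k,R)$ is the category $\Chow^{\eff}(k,R)$ of effective Chow motives, which contains $M(Y)$ for every smooth projective $Y$. Since the heart of a weight structure is an additive, idempotent-complete subcategory, it is stable under finite direct sums; and it is stable under the effective Tate twists $\{n\}$ for $n\geq 0$, because $M(Y)\{n\}$ is a direct summand of $M(Y\times(\PP^1)^{\times n})$, the motive of a smooth projective variety. Applying this to each summand $M(X_i)\{c_i^+\}$, and then to their finite direct sum, shows that $M(X)$ lies in the heart, i.e.\ is a pure motive.

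There is essentially no obstacle here: the entire content is packaged in Theorem~\ref{thm_bb_mot}\ref{mot BB} together with the standard compatibility of the Chow weight structure with finite direct sums and effective Tate twists. The only minor points to spell out are that purity of each $M(X_i)$ uses that the $X_i$ are smooth \emph{and} projective, which is exactly Theorem~\ref{thm_bb_dec}\ref{fixed}, and that effective Tate twists preserve the heart; both are standard and can be cited from \cite[\S6]{Bondarko}.
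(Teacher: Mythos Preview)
Your proposal is correct and matches the paper's own argument, which is just the sentence preceding the corollary: the fixed loci are smooth projective, so their motives are pure, and the motivic Bia{\l}ynicki--Birula decomposition of Theorem~\ref{thm_bb_mot}\ref{mot BB} exhibits $M(X)$ as a finite direct sum of effective Tate twists of such motives. Your additional remarks (finiteness of $I$, constructibility of $M(X)$, stability of the heart under $\{n\}$ via the projective bundle formula) simply make explicit the routine points the paper leaves to the reader.
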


In the smooth projective case, one has $M^c(X) \simeq M(X)$ and one would like compare this decomposition with the decomposition obtained for the opposite $\GG_m$-action. We do not know the answer and leave it as a question.

\begin{question}
 Let $X$ be a smooth projective variety with a $\GG_m$-action. Then, via the isomorphism $M^c(X)\simeq M(X)$, do the motivic Bia{\l}ynicki-Birula decompositions of $M(X)$ in Theorem \ref{thm_bb_mot} \ref{mot BB} and of $M^c(X)$ in Theorem \ref{thm_bb_mot} \ref{comp mot BB} for the \emph{opposite} $\GG_m$-action coincide? In other words, for every $(i,j)\in I^2$, is the composition
\[
M(X_i)\{r_i^-\}\stackrel{(\gamma^-_i)^t}{\longrightarrow} M^c(X)\simeq M(X)\stackrel{\gamma^+_j}{\longrightarrow}M(X_j)\{c_j^+\}
\]
zero if $i\neq j$ and the identity if $i=j$ (noting the equality $r_i^-=c_i^+$)?
\end{question}

The motivic Bia{\l}ynicki-Birula decomposition is not functorial with respect to all equivariant maps, only those that are transverse in the following sense.

\begin{defn}\label{defn transverse BB-map}
Let $f : X \ra Y$ be a $\GG_m$-equivariant morphism of semi-projective varieties with associated Bia{\l}ynicki-Birula decompositions $X = \sqcup_{i \in I} X_i^+$ and $ Y = \sqcup_{j \in J} Y_j^+$. We say $f$ is a transverse BB-map if there is an injection $\phi : I \ra J$ compatible with choices of orderings of these index sets given by Theorem \ref{thm_bb_dec} \ref{order} such that $f(X_i) \subset Y_{\phi(i)}$ and for all $i \in I$, the morphism of closed pairs $f : (X_i^+,\sqcup_{l \leq i} X_{l}^+) \ra (Y_{\phi(i)}^+,\sqcup_{l \leq i} Y_{\phi(l)}^+)$ induces a cartesian square and these closed pairs have the same codimensions.
\end{defn}

\begin{prop}\label{prop transverse BB-map}
Let $f : X \ra Y$ be a transverse BB-map as above. Then the morphism $M(f) : M(X) \ra M(Y)$ is compatible with the decompositions given by Theorem \ref{thm_bb_mot} \ref{mot BB} in the sense that we have a commutative diagram
\[\xymatrix{ M(X) \ar[r]^{M(f)}  \ar[d]^{\simeq} & M(Y) \ar[d]^{\simeq}\\ 
 \bigoplus_{i \in I} M(X_i)\{c_i^+\} \ar[r] & \bigoplus_{j \in J} M(Y_j)\{c_j^+\}
      }
  \] 
where the lower map is induced by the morphism $f$ restricted to the fixed loci.
\end{prop}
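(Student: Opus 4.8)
The plan is to revisit the inductive construction of the motivic decomposition in the proof of Theorem~\ref{thm_bb_mot}~\ref{mot BB} and check that it is natural with respect to the transverse BB-map $f$. Fix the filtrations $\emptyset=Z_n^X\subset\cdots\subset Z_0^X=X$ and $\emptyset=Z_m^Y\subset\cdots\subset Z_0^Y=Y$ of Theorem~\ref{thm_bb_dec}~\ref{order}, with orderings $\varphi_X:\{1,\dots,n\}\to I$ and $\varphi_Y:\{1,\dots,m\}\to J$, and set $U_k^X:=X\setminus Z_k^X$, $U_l^Y:=Y\setminus Z_l^Y$, so that $U_k^X\setminus U_{k-1}^X=X^+_{\varphi_X(k)}$ and likewise for $Y$. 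Since $\phi$ is compatible with the orderings, there is a strictly increasing injection $\psi:\{1,\dots,n\}\hookrightarrow\{1,\dots,m\}$ with $\varphi_Y\circ\psi=\phi\circ\varphi_X$. The first step is to extract three consequences of $\GG_m$-equivariance of $f$ and of Definition~\ref{defn transverse BB-map}: (a) $f(X^+_i)\subseteq Y^+_{\phi(i)}$ for all $i$ (immediate from $f(X_i)\subseteq Y_{\phi(i)}$ since $f$ commutes with the $\GG_m$-limit), hence $f(U^X_k)\subseteq U^Y_{\psi(k)}$ and $f(U^X_{k-1})\subseteq U^Y_{\psi(k)-1}$; (b) $f^{-1}(Y^+_{\varphi_Y(\psi(k))})\cap U^X_k=X^+_{\varphi_X(k)}$, using (a), the injectivity of $\phi$ and $\varphi_X$, and disjointness of the attracting cells; (c) $c^+_{\varphi_X(k)}=c^+_{\varphi_Y(\psi(k))}=:c_k$, by the equal-codimension clause of Definition~\ref{defn transverse BB-map}.

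By (a) and (b), for each $k$ the square with closed immersions $X^+_{\varphi_X(k)}\hookrightarrow U^X_k$ and $Y^+_{\varphi_Y(\psi(k))}\hookrightarrow U^Y_{\psi(k)}$ as rows and $f$ as vertical maps is cartesian; all four schemes are smooth (Theorem~\ref{thm_bb_dec}~\ref{fixed}), and by (c) these closed pairs have the same codimension $c_k$. The base change property of the Gysin formalism along such transverse cartesian squares of smooth pairs (a standard property, cf.\ \cite{Deglise_Gysin_1}, of the same kind already used in the proof of Theorem~\ref{thm_bb_mot}) then produces a morphism of Gysin distinguished triangles whose middle square is
\[
\xymatrix{
M(U^X_k)\ar[r]^{\Gy}\ar[d]_{M(f)} & M(X^+_{\varphi_X(k)})\{c_k\}\ar[d]^{M(f)\{c_k\}}\\
M(U^Y_{\psi(k)})\ar[r]^{\Gy} & M(Y^+_{\varphi_Y(\psi(k))})\{c_k\}
}
\]
(the right vertical map being induced by the restriction of $f$ to attracting cells) and whose left square compares $M(U^X_{k-1})\to M(U^X_k)$ with $M(U^Y_{\psi(k)-1})\to M(U^Y_{\psi(k)})$, again via $M(f)$, using (a). Composing the right vertical map with the affine-bundle retractions $p^+_{\varphi_X(k)}$ and $q^+_{\varphi_Y(\psi(k))}$ and using $q^+_{\varphi_Y(\psi(k))}\circ f=f\circ p^+_{\varphi_X(k)}$ on attracting cells (once more because $f$ commutes with limits), this identifies the induced map on the summands $M(X_{\varphi_X(k)})\{c_k\}\to M(Y_{\varphi_Y(\psi(k))})\{c_k\}$ with the one coming from $f$ restricted to the fixed loci.

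Finally, run the induction on $k$ from the proof of Theorem~\ref{thm_bb_mot}~\ref{mot BB}: assume that, under the motivic decompositions of $U^X_{k-1}$ and $U^Y_{\psi(k-1)}$, the map $M(U^X_{k-1})\to M(U^Y_{\psi(k-1)})$ sends the $X_{\varphi_X(l)}$-summand to the $Y_{\varphi_Y(\psi(l))}$-summand via $M(f|_{X_{\varphi_X(l)}})$ for $l\le k-1$ and is zero on the remaining target summands. Using that the motivic decomposition is compatible with the open immersion $U^Y_{\psi(k-1)}\hookrightarrow U^Y_{\psi(k)-1}$ (where it is just the inclusion of summands — this is part of the same inductive construction in the proof of Theorem~\ref{thm_bb_mot}) and feeding this into the split Gysin triangles above forces the analogous statement for $M(U^X_k)\to M(U^Y_{\psi(k)})$; in particular the ``skipped'' target summands $M(Y_{\varphi_Y(l)})\{c^+_{\varphi_Y(l)}\}$ for $\psi(k-1)<l<\psi(k)$ receive zero. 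Taking $k=n$ and composing with the open immersion $U^Y_{\psi(n)}\hookrightarrow Y$ yields exactly the commutative square of the Proposition, with the summands $M(Y_j)\{c^+_j\}$ for $j\notin\phi(I)$ receiving zero. I expect the only genuine difficulty to be the combinatorial bookkeeping of the two Bia{\l}ynicki-Birula stratifications and their orderings through the induction — in particular controlling the strata of $Y$ lying outside $\phi(I)$ — which is precisely what Definition~\ref{defn transverse BB-map} is designed to streamline; the only non-formal motivic input is the base change property of Gysin triangles along transverse cartesian squares of smooth pairs of equal codimension.
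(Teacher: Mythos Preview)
Your proposal is correct and follows essentially the same approach as the paper: use the transversality of the relevant closed pairs to obtain compatibility of the Gysin maps (via the functoriality results of D\'eglise), and then run through the inductive construction of the motivic decomposition in Theorem~\ref{thm_bb_mot}~\ref{mot BB}. The paper's own proof says exactly this in three sentences, citing \cite[Definition~1.1 and Proposition~1.19~(1)]{Deglise_Gysin_1} for the compatibility of Gysin maps; you have simply unpacked the induction and the combinatorics of the two stratifications in detail, including the verification that the ``skipped'' summands on the target side receive zero, which the paper leaves implicit.
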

\begin{proof} 
By assumption, the morphism of closed pairs $f : (X_i^+,\sqcup_{l \leq i} X_{l}^+) \ra (Y_{\phi(i)}^+,\sqcup_{l \leq i} Y_{\phi(l)}^+)$ is transversal in the sense of \cite[Definition 1.1]{Deglise_Gysin_1} and thus the associated Gysin maps are compatible (in the sense that there is an induced commutative diagram) by \cite[Proposition 1.19 (1)]{Deglise_Gysin_1}. The claim then follows by going through the proof of Theorem \ref{thm_bb_mot} \ref{mot BB}, as the motivic BB decompositions are built from these Gysin maps.
\end{proof}

\section{Motives of equivariant semi-projective specialisations}
\label{sec:motiv-equiv-semipr}

As an application of the motivic Bia{\l}ynicki-Birula decomposition of Appendix \ref{sec mot BB}, we study $\GG_{m}$-equivariant specialisations of smooth semi-projective varieties. This is applied in the body of the article to compare the motives of the Higgs and de Rham moduli spaces on a curve, see $\S$\ref{sec motivic_NAH}. We also give an application to algebraic symplectic reductions following \cite{CBVdB, Hausel_RV_semiproj}.

Various cohomological incarnations of the following result for coarser invariants than Voevodsky motives have already appeared in the literature before, but as far as we know the conclusion is new even for Chow groups. Nakajima's proof in the appendix to \cite{CBVdB} shows the fibres of a family $f : X \ra \AA^1$ as below have the same number of rational points over a finite field $\FF_q$. For $k = \CC$ and a family $f : X \ra \AA^{1}$ of hyperk\"{a}hler reductions of a cotangent bundle to a complex vector space, there are specific instances of the topological triviality of $f$ appearing for quivers in \cite[Lemma 2.3.3]{CBVdB} and Higgs bundles in \cite[Lemma 6.1]{HT}. For a family $f : X \ra \AA^1$ over $k = \CC$ as below, Hausel, Letellier and Rogriguez-Villegas proved the fibres have isomorphic cohomology supporting pure mixed Hodge structures \cite[Theorem 7.2.1]{HLRV}

\begin{thm}\label{thm:semiproj-spe}
Let $R$ be a ring such that the exponential characteristic of $k$ is invertible in $R$. Let $X$ be a smooth quasi-projective $k$-variety equipped with a semi-projective $\GG_{m}$-action and let $f: X\to \AA^{1}$ be a smooth morphism, which is $\GG_{m}$-equivariant with respect to the given action on $X$ and a $\GG_{m}$-action on $\AA^{1}$ of positive weight. For $t\in \AA^{1}(k)$, write $X_{t}:=f^{-1}(t)$. Then, for any $t\in k$, the fibre inclusion $\iota_t : X_t \hookrightarrow X$ induces an isomorphism
  \[
M(X_{t})\simeq M(X) \in \DM(k,R).
\]
\end{thm}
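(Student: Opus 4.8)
The plan is to treat the fibre $X_0$ and a fibre $X_t$ with $t\neq 0$ separately, reducing in both cases to the motivic Bia{\l}ynicki-Birula machinery of Appendix~\ref{sec mot BB}. Two preliminary observations: since $\GG_m$ acts on $\AA^1$ with positive weight $w$, one has $\lim_{s\to 0}s\cdot y=0$ for every $y\in\AA^1$, hence $\lim_{s\to 0}s\cdot x\in X_0$ for every $x\in X$, and in particular $X^{\GG_m}\subset X_0$; moreover, as $f$ is smooth, $X_0=f^{-1}(0)$ is a smooth $\GG_m$-invariant divisor in $X$, and the induced $\GG_m$-action on $X_0$ is again semi-projective, with fixed locus $X^{\GG_m}$ and attracting sets $X_0\cap X_i^+$.

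\textbf{The case $t=0$.} Here I would check that $\iota_0\colon X_0\hookrightarrow X$ is a transverse BB-map in the sense of Definition~\ref{defn transverse BB-map} and conclude with Proposition~\ref{prop transverse BB-map}. The two index sets of fixed components coincide, $\iota_0$ restricts to the identity on them, and the squares in question are cartesian since $X_0\cap X_i^+=X_0\times_X X_i^+$ and the $\GG_m$-linearisation used to filter $X$ restricts to one on $X_0$. The substantive point is equality of the codimensions of the closed pairs (and transversality of the intersections): at a fixed point $x\in X_i$ the differential $df_x\colon T_xX\to T_0\AA^1$ is $\GG_m$-equivariant and surjective, and $T_0\AA^1$ sits in weight $w>0$, so $df_x$ is carried by the weight-$w$ summand $(T_xX)_w$ (which is therefore nonzero) and $(T_xX_0)_k=(T_xX)_k$ for all $k\neq w$, in particular for all $k<0$; by the tangent-space description in Theorem~\ref{thm_bb_dec}\,\ref{tangent} this gives $\codim_{X_0}(X_0\cap X_i^+)=\codim_X(X_i^+)$. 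Proposition~\ref{prop transverse BB-map} then realises $M(\iota_0)$ as a direct sum of identity maps $M(X_i)\{c_i^+\}\to M(X_i)\{c_i^+\}$, hence an isomorphism.

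\textbf{The case $t\neq 0$.} Put $U:=f^{-1}(\GG_m)=X\setminus X_0$, a $\GG_m$-invariant open. In the Gysin triangle $M(U)\to M(X)\to M(X_0)\{1\}\rap$ of the smooth divisor $X_0$, the composite of the Gysin map with the isomorphism $M(\iota_0)$ is cup product with the Euler class of $N_{X_0/X}\cong f^{*}N_{\{0\}/\AA^1}\cong\cO_{X_0}$, hence zero; so the Gysin map vanishes and the triangle splits as
\[
M(U)\ \simeq\ M(X)\oplus M(X)\{1\}[-1],
\]
with $U\hookrightarrow X$ inducing the projection onto the $M(X)$ summand. Since $M(X)$ is pure of weight $0$ by Corollary~\ref{cor semiproj pure}, $M(U)$ has weights in $[0,1]$. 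On the other hand, when $w=1$ the action map $\GG_m\times X_t\to U$, $(s,x)\mapsto s\cdot x$, is an isomorphism (inverse $y\mapsto(f(y)t^{-1},\,(f(y)t^{-1})^{-1}\cdot y)$), so $M(U)\simeq M(\GG_m)\otimes M(X_t)\simeq M(X_t)\oplus M(X_t)\{1\}[-1]$; as $M(X_t)\{1\}[-1]$ is a summand of $M(U)$ it has weights $\leq 1$, while it has weights $\geq 1$ because $X_t$ is smooth, so $M(X_t)\{1\}[-1]$ — and hence $M(X_t)$ — is pure. Comparing the two splittings of $M(U)$ via the weight-$0$ truncation yields $M(X_t)\simeq M(X)$, and chasing $X_t\hookrightarrow U\hookrightarrow X$ through the splittings (the inclusion of the weight-$0$ summand followed by the projection onto it) identifies this with $M(\iota_t)$. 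For general positive weight $w$, the same map exhibits $U$ as the quotient of $\GG_m\times X_t$ by a free $\mu_w$-action, and the argument must be pushed through that finite quotient.

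\textbf{Expected main obstacle.} The case $t=0$ is a direct application of Appendix~\ref{sec mot BB}. The hard part will be the $t\neq 0$ step, and within it: (i) showing that $M(X_t)$ is a direct summand of $M(U)$ — immediate for weight $1$, available when $w$ is invertible in $R$ via a transfer along the $\mu_w$-cover $\GG_m\times X_t\to U$, but requiring more care in general; and (ii) verifying that the resulting cancellation isomorphism really is the one induced by $\iota_t$. The weight-theoretic bookkeeping needed for (ii) — purity of $M(X)$, the weight bounds for $M(X_t)$ coming from smoothness, naturality of the Gysin triangles, and the triviality on motives of the connected group $\GG_m$ — is where I expect most of the work to lie.
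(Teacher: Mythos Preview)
Your treatment of the zero fibre matches the paper's: verify that $\iota_{0}$ is a transverse BB-map and apply Proposition~\ref{prop transverse BB-map}, your tangent-space computation supplying the codimension equality that the paper attributes to Nakajima's appendix in \cite{CBVdB}.

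For $t\neq 0$ your argument is correct in weight~$1$ but takes a genuinely different route. Both you and the paper start from the same two inputs: the vanishing of the Gysin map for $X_{0}\hookrightarrow X$ (via triviality of the normal bundle) and, in weight~$1$, the isomorphism $U\simeq X_{t}\times\GG_{m}$. From there the paper avoids weight structures entirely: it compares the Gysin triangle of $(X,X_{0})$ with that of $(X\times\AA^{1},X\times\{0\})$ via the cartesian square with vertical maps $\iota_{0}$ and the graph $\id\times f:X\to X\times\AA^{1}$; functoriality of Gysin triangles \cite{Deglise_Gysin_II} together with the already-established isomorphism $M(\iota_{0})$ forces $(\id\times f)|_{U}$ to induce an isomorphism $M(U)\simeq M(X\times\GG_{m})$, and a small commutative square involving $\sigma$ then shows that $M(\iota_{1}\times\id_{\GG_{m}})$ is an isomorphism, whence $M(\iota_{1})$ is by K\"unneth. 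Your approach instead invokes Corollary~\ref{cor semiproj pure} and uses weight orthogonality to force the two direct-sum decompositions of $M(U)$ to match on their pure pieces; this works, and your chase of $X_{t}\hookrightarrow U\hookrightarrow X$ through the splittings correctly identifies the resulting isomorphism with $M(\iota_{t})$. One cosmetic point: in the paper's (Bondarko) convention $R\{1\}[-1]$ is pure of weight $-1$ and motives of smooth varieties lie in $w\leq 0$, so your interval $[0,1]$ should read $[-1,0]$ --- the substance of the argument is symmetric and unaffected. The paper's path is a little more elementary (no weight structure needed); yours makes the role of purity explicit.

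On general positive weight: the paper disposes of this in one line at the very start, by base-changing the family along $\AA^{1}\to\AA^{1}$, $z\mapsto z^{w}$, which produces a new smooth semi-projective family with weight~$1$ action. This sidesteps your $\mu_{w}$-quotient and transfer obstacle, and you should adopt it rather than attempting to push the argument through the finite cover.
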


\begin{proof}
We can assume that $f$ is equivariant with respect to the standard $\GG_m$-action of weight 1 on $\AA^1$ by performing a base change via a morphism $\AA^1 \ra \AA^1$ of the form $z \mapsto z^n$.

 We start by comparing the motives $M(X_{0})$ and $M(X)$. First, we observe that the $\GG_{m}$-action on $X$ restricts to $X_{0}$ and is semi-projective there. Indeed, $X_{0}$ is smooth (since $f$ is smooth) and quasi-projective, we have $X_{0}^{\GG_{m}}=X^{\GG_{m}}$, and the condition on the existence of limits is inherited from $X$. Moreover, the inclusion morphism $\iota_0: X_{0}\to X$ is a transverse BB-morphism in the sense of Definition \ref{defn transverse BB-map}; the cartesian property follows as $\iota_0$ is a closed $\GG_m$-invariant immersion and the codimension calculation is essentially performed in Nakajima's appendix in \cite{CBVdB}. By Proposition \ref{prop transverse BB-map}, the morphism $M(X_{0})\to M(X)$ is compatible with the motivic BB decompositions provided by Theorem \ref{thm_bb_mot} \ref{mot BB} and is induced by the corresponding maps on fixed points, which are isomorphisms since $X_{0}^{\GG_{m}}=X^{\GG_{m}}$. This shows that $M(X_{0})\simeq M(X)$.

We now turn to the non-zero fibres. The morphism
  \[
\sigma : X_{1}\times \GG_{m}\to X\setminus X_{0},\ (x,t)\mapsto t\cdot x
    \]
is an isomorphism because of the equivariance of $f$, so that it is enough to treat the case of $X_{1}$. For the codimension $1$ inclusion $\iota_0: X_{0}\hookrightarrow X$ we have a Gysin triangle
\[
M(X \setminus X_0) \to M(X) \to M(X_{0})\{1\}\rap.
  \]
  As with any Gysin triangle, the composition $M(X_{0})\simeq M(X)\to M(X_{0})\{1\}$ is the Euler class of the normal bundle of $X_{0}$ in $X$, which is trivial since $X_0$ is a fibre of a smooth morphism to a smooth variety. We deduce that the Gysin triangle splits. This suggests we should compare this triangle to the trivially split one coming from the pair $(X\times\AA^{1},X)$. Indeed, there is a cartesian square of closed immersions
  \[
    \xymatrix{
X_{0} \ar@{^{(}->}[d]^{\iota_0} \ar@{^{(}->}[r]^{\iota_0} & X \ar[d]^{\id\times f} \\
X \ar[r]_{\id\times \{0\}} & X\times\AA^{1} .    
    }
  \]
By \cite[Theorem 4.32]{Deglise_Gysin_II}, we deduce that there is a morphism of (shifts of) Gysin triangles
  \[
    \xymatrix{
      M(X_{0})(1)[1] \ar[r] \ar[d]_{\simeq} & M(X\setminus X_{0}) \ar[r] \ar[d] & M(X) \ar[d]_{\simeq} \ar[r]^{0} & M(X_{0})(1)[2] \ar[d]_{\simeq} \\
      M(X)(1)[1] \ar[r] & M(X\times \GG_{m}) \ar[r] & M(X\times\AA^{1}) \ar[r]^{0} & M(X)(1)[2]
      }
    \]
    from which we conclude that the morphism $(\id\times f)_{*}:M(X\setminus X_{0})\to M(X\times\GG_{m})$ is an isomorphism. Now, consider the commutative diagram
    \[
      \xymatrix{
        X_{1}\times \GG_{m} \ar[r]^{\simeq}_{\sigma} \ar[d]_{\iota_1\times \id} & X\setminus X_{0} \ar[d]_{\id\times f}  \\
        X\times\GG_{m} \ar[r]^{\simeq} & X\times\GG_{m}
      }
      \]
of $k$-varieties, where the bottom isomorphism is $(x,t)\mapsto (t\cdot x,t)$. We have shown that the right vertical map induces an isomorphism of motives, and thus we conclude that the left vertical map $\iota_1\times \id$ does as well. By K\"{u}nneth, it then follows that $\iota_1$ induces an isomorphism of motives, which concludes the proof.
\end{proof}

\begin{cor}\label{cor:semiproj-inv}
With the notations and assumptions of Theorem \ref{thm:semiproj-spe}, the morphism $X_{t}\to X$ induces an isomorphism of Chow rings with $R$-coefficients, of $\mathbb{Z}_{\ell}$-adic cohomology groups for any prime $\ell$ invertible in $k$, and more generally for any cohomology theory representable in $\DM(k,R)$ in a suitable sense.
\end{cor}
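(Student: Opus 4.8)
The plan is to deduce the statement formally from the motivic isomorphism of Theorem~\ref{thm:semiproj-spe}, using that all the invariants in question are either $\Hom$-groups in $\DM(k,R)$ or values of monoidal realisation functors out of it. First I would recall that for a smooth $k$-variety $X$ there is a natural identification $\CH^{i}(X,R)\simeq\Hom_{\DM(k,R)}(M(X),R(i)[2i])$ (Voevodsky's comparison theorem; see the summary in \cite[\S2]{HPL}). Since the isomorphism of Theorem~\ref{thm:semiproj-spe} is precisely $M(\iota_{t})$ for the fibre inclusion $\iota_{t}\colon X_{t}\hookrightarrow X$, applying the functor $\Hom_{\DM(k,R)}(-,R(i)[2i])$ gives isomorphisms $\iota_{t}^{*}\colon\CH^{i}(X,R)\xrightarrow{\ \sim\ }\CH^{i}(X_{t},R)$ for every $i$.

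To upgrade this to an isomorphism of graded rings, I would invoke that the cup product on $\bigoplus_{i}\Hom_{\DM(k,R)}(M(X),R(i)[2i])$ is the convolution product for the comultiplication $M(X)\to M(X)\otimes M(X)$ induced by the diagonal of $X$. As $M(\iota_{t})$ is the motive of a morphism of schemes, functoriality of $M$ together with the K\"unneth isomorphism applied to $\Delta_{X}\circ\iota_{t}=(\iota_{t}\times\iota_{t})\circ\Delta_{X_{t}}$ shows that $M(\iota_{t})$ is a morphism of coalgebra objects in $\DM(k,R)$; hence $\iota_{t}^{*}$ is multiplicative, and so a ring isomorphism.

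For the $\ZZ_{\ell}$-adic cohomology groups with $\ell$ invertible in $k$, I would apply Theorem~\ref{thm:semiproj-spe} with $R=\ZZ[1/p]$, where $p$ is the exponential characteristic of $k$ (legitimate, since $p$ is invertible in $\ZZ[1/p]$), obtaining $M(\iota_{t})\colon M(X_{t})\xrightarrow{\ \sim\ }M(X)$ in $\DM(k,\ZZ[1/p])$, and then compose with the $\ell$-adic \'etale realisation $\DM(k,\ZZ[1/p])\to\widehat{D}(k_{\et},\ZZ_{\ell})$ for $\ell\neq p$. Since this realisation is monoidal and sends $M(X)$ to the complex computing the $\ell$-adic cohomology of $X$, the coalgebra argument above again yields an isomorphism of $\ZZ_{\ell}$-cohomology rings. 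More generally, for a cohomology theory represented by a ring object $\mathbb{E}\in\DM(k,R)$, or obtained through a monoidal triangulated realisation functor $\rho\colon\DM(k,R)\to\cD$ whose product is compatible with the motivic diagonal (which covers motivic cohomology, de Rham cohomology, Betti cohomology when $k=\CC$, and the like), the identical argument applies.

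I do not expect a real obstacle here: the entire content sits in Theorem~\ref{thm:semiproj-spe}, and what remains is the bookkeeping of the coalgebra/monoidality compatibilities. The one point deserving care is making precise what ``representable in a suitable sense'' should mean---namely, representability by a ring object, or via a monoidal realisation, with the multiplication compatible with the diagonal---so that the convolution-product argument goes through; this holds in all the examples one cares about.
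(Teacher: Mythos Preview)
Your proposal is correct and follows essentially the same approach as the paper: deduce the Chow-ring statement from representability of Chow groups in $\DM(k,R)$ together with the fact that $M(\iota_t)$, being the motive of a morphism of varieties, is a coalgebra map (hence compatible with cup products), and obtain the $\ell$-adic statement by applying the $\ell$-adic realisation functor. The paper's proof is terser but the content is the same; your extra care in choosing $R=\ZZ[1/p]$ for the integral $\ell$-adic case and in spelling out what ``representable in a suitable sense'' should mean is a reasonable elaboration.
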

\begin{proof}
The result for Chow groups follows from the representability of Chow groups as morphism groups in $\DM(k,R)$, together with the fact that the isomorphism $M(X_{t})\simeq M(X)$ is induced by a morphism of smooth varieties, hence compatible with the diagonal and hence with the cup-product on Chow groups. For $\ell$-adic cohomology, it follows from applying the $\ell$-adic realisation functor of \cite{Ivorra-adic}.
\end{proof}  

\subsection{Applications to families of algebraic symplectic reductions}

Let $\rho : G \ra \GL(\VV)$ be a linear action of a reductive group $G$ on a finite dimensional $k$-vector space $\VV$. This induces an action of $G$ on the cotangent bundle $T^*\VV \cong \VV \times \VV^*$, which preserves the Liouville algebraic symplectic form. Let $\fg := \mathrm{Lie}\: G$ denote the Lie algebra of $G$. Then there is a moment map $\mu : T^*\VV \ra \fg^*$ given by
\[ \langle \mu(v,w), A\rangle = \langle \rho(A)v,w \rangle\]
which is an algebraic morphism that is $G$-equivariant with respect to the coadjoint action on $\fg^*$. One constructs an algebraic symplectic reduction at $0 \in \fg^*$ with respect to a character $\chi : G \ra \GG_m$ by taking a GIT quotient
\[ X_0:=\mu^{-1}(0)/\!/_{\chi} G; \]
for further details, see \cite{ginzburg} or \cite[$\S$1.1.1]{Hausel_RV_semiproj}. We can fit this into a family over $\AA^1$ by taking a 1-dimensional vector subspace $L = k \theta \subset \fg^*$ spanned by a coadjoint fixed point $\theta$ and then considering the family
\[ f: X := \mu^{-1}(L)/\!/_{\chi} G \ra \AA^1. \]
By construction of the GIT quotient, both $X$ and $X_0$ are quasi-projective varieties. We suppose that $f$ is a smooth morphism; this will be the case in certain examples for generic choices of $\chi$. 

There is a $\GG_m$-action on $X$ such that $X_0$ is an equivariant semi-projective specialisation of $X$ as described by Hausel and Rodriquez-Villegas \cite[$\S$1.1.1]{Hausel_RV_semiproj}: the dilation action of $\GG_m$ on $T^*\VV$ commutes with the $G$-action and the moment map is equivariant with respect to this action and the weight 2 action of $\GG_m$ on $\fg^*$. This  $\GG_m$-action descends to $X$ and $X_0$ such that $f$ is equivariant with respect to the $\GG_m$-action on $\AA^1$ of weight 2. Furthermore, as the $\GG_m$-action on $T^*\VV$ is semi-projective, it follows that the $\GG_m$-action on $X_0$ and $X$ are semi-projective, as both are projective over their associated affine GIT quotients; see \cite[$\S$1.1.1]{Hausel_RV_semiproj} for details. Therefore, we can apply Theorem \ref{thm:semiproj-spe} to obtain the following corollary.

\begin{cor} \label{cor:algsymp_semiproj-spe}
In the above set-up, suppose that the morphism $f : X \ra \AA^1$ is smooth. Then the fibre inclusions induce isomorphisms $M(X_t) \simeq M(X)$ in  $\DM(k,R)$ for all $t \in k$.
\end{cor}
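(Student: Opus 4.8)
The plan is to reduce the statement entirely to Theorem \ref{thm:semiproj-spe}: I would verify that the family $f : X \to \AA^{1}$ meets the three hypotheses of that theorem --- $X$ smooth quasi-projective, $f$ smooth, and a semi-projective $\GG_{m}$-action on $X$ with respect to which $f$ is equivariant for a positive-weight action on $\AA^{1}$ --- and then invoke it directly. The first two inputs are essentially packaged into the set-up: $X = \mu^{-1}(L)/\!/_{\chi} G$ is a GIT quotient, hence quasi-projective, and it is projective over the affine quotient $\mu^{-1}(L)/\!/ G = \Spec\big(k[\mu^{-1}(L)]^{G}\big)$; smoothness of $X$ and of $f$ is assumed in the statement (the smoothness of $f$ being a genuine hypothesis, which holds for generic $\chi$ in the examples one cares about).

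For the $\GG_{m}$-action I would recall from \cite[\S1.1.1]{Hausel_RV_semiproj} that the dilation action on $T^{*}\VV$ commutes with the $G$-action and that the moment map $\mu$ intertwines it with the weight-$2$ scaling on $\fg^{*}$. Since $L = k\theta$ is spanned by a coadjoint-fixed vector, $\mu^{-1}(L)$ is $\GG_{m}$-stable with $\GG_{m}$ acting on $L \cong \AA^{1}$ with weight $2$; as the character $\chi$ is $\GG_{m}$-invariant, the action descends to $X$ and to $X_{0}$, and by construction $f$ becomes equivariant for the weight-$2$ (hence positive-weight) action on $\AA^{1}$. The one non-formal point is semi-projectivity of the $\GG_{m}$-action on $X$: the dilation action contracts $T^{*}\VV$, hence the affine base $\mu^{-1}(L)/\!/ G$, to its unique cone point, so the action on this affine base is semi-projective, and since $X$ is projective over it via a $\GG_{m}$-equivariant morphism, existence of limits $\lim_{t\to 0} t\cdot x$ and properness of $X^{\GG_{m}}$ propagate to $X$ (any fixed point maps to the cone point, so $X^{\GG_{m}}$ sits inside a single projective fibre); the same argument applies to $X_{0}$.

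With the hypotheses in place, Theorem \ref{thm:semiproj-spe} applies and yields, for every $t \in k$, an isomorphism $M(X_{t}) \simeq M(X)$ in $\DM(k,R)$ induced by the fibre inclusion $\iota_{t} : X_{t} \hookrightarrow X$. I expect the only step requiring care is the semi-projectivity verification just sketched --- everything else being hypothesis or formal --- and in the final write-up it suffices to cite \cite{Hausel_RV_semiproj} for it, exactly as the discussion preceding the corollary already does.
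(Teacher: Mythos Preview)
Your proposal is correct and matches the paper's approach exactly: the paper has no separate proof for this corollary, as the discussion immediately preceding it already verifies that $X$ is smooth quasi-projective with a semi-projective $\GG_m$-action for which $f$ is equivariant with respect to a positive-weight action on $\AA^1$, and then simply invokes Theorem \ref{thm:semiproj-spe}. Your write-up is, if anything, more detailed than the paper's treatment, and your remark that the semi-projectivity check can be outsourced to \cite{Hausel_RV_semiproj} is precisely what the paper does.
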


\begin{ex}
In \cite{CBVdB} this set-up arises in order to study the Kac polynomials of quivers. Fix a quiver $Q=(V,A,h,t)$ with vertex set $V$, arrow set $A$ and head and tail maps $h,t : A \ra V$ giving the directions of the arrows. For a dimension vector $d = (d_v)_{v \in V}$, we set 
\[ \VV = \mathrm{Rep}_d(Q) := \bigoplus_{a \in A} \Hom(k^{d_{t(a)}},k^{d_{h(a)}}) \] 
and $G = \prod_{v \in V} \GL_{d_v}$, which acts linearly on $\VV$ by conjugation. Then $T^*\VV \cong \mathrm{Rep}_d(\overline{Q})$ is the representation space of the associated doubled quiver $\overline{Q}$ obtained by adding an opposite arrow $a^* : j \ra i$ for each arrow $a : i \ra j$ in $A$. One takes a generic stability parameter $\theta \in \ZZ^V$ which induces a character $\chi_\theta : G \ra \GG_m$ (see \cite{CBVdB} for details), so that for $L = k \theta$ the associated morphism $f: X \ra \AA^1$ is smooth. In this case the zero fibre $X_0$ is a moduli space of $\theta$-semistable $d$-dimensional representations of the double quiver $\overline{Q}$ satisfying the relations $\cR_0$ imposed by the zero level set of the moment map (representations of $(\overline{Q},\cR_0)$ are modules over the \emph{preprojective algebra}, see \cite{CBVdB}). By Corollary \ref{cor:algsymp_semiproj-spe}, we have
\[ M(X_0) \simeq M(X) \simeq M(X_1), \]
which lifts the result that the compactly supported $\ell$-adic cohomology of the fibres of $f$ are isomorphic in large characteristic \cite[Corollary 3.2.3]{CBVdB}.
\end{ex}

\bibliographystyle{abbrv}
\bibliography{references}

\begin{thebibliography}{10}

\bibitem{ACGPS}
L.~\'{A}lvarez C\'{o}nsul, O.~Garc\'{\i}a-Prada, and A.~H.~W. Schmitt.
\newblock On the geometry of moduli spaces of holomorphic chains over compact
  {R}iemann surfaces.
\newblock {\em IMRP Int. Math. Res. Pap.}, pages Art. ID 73597, 82, 2006.

\bibitem{AHEW}
G.~Ancona, S.~Enright-Ward, and A.~Huber.
\newblock On the motive of a commutative algebraic group.
\newblock {\em Doc. Math.}, 20:807--858, 2015.

\bibitem{Ayoub_these_1}
J.~Ayoub.
\newblock Les six op\'erations de {G}rothendieck et le formalisme des cycles
  \'evanescents dans le monde motivique. {I}.
\newblock {\em Ast\'erisque}, (314):x+466 pp. (2008), 2007.

\bibitem{BD}
K.~Behrend and A.~Dhillon.
\newblock On the motivic class of the stack of bundles.
\newblock {\em Adv. Math.}, 212(2):617--644, 2007.

\bibitem{BB}
A.~Bia{\l}ynicki-Birula.
\newblock Some theorems on actions of algebraic groups.
\newblock {\em Ann. of Math. (2)}, 98:480--497, 1973.

\bibitem{Bondarko}
M.~V. Bondarko.
\newblock Weight structures vs. {$t$}-structures; weight filtrations, spectral
  sequences, and complexes (for motives and in general).
\newblock {\em J. K-Theory}, 6(3):387--504, 2010.

\bibitem{Brosnan}
P.~Brosnan.
\newblock On motivic decompositions arising from the method of {B}ia\l
  ynicki-{B}irula.
\newblock {\em Invent. Math.}, 161(1):91--111, 2005.

\bibitem{Choudhury_Skowera}
U.~Choudhury and J.~Skowera.
\newblock Motivic decomposition for relative geometrically cellular stacks.
\newblock {\em Comm. Algebra}, 44(2):648--655, 2016.

\bibitem{CBVdB}
W.~Crawley-Boevey and M.~Van~den Bergh.
\newblock Absolutely indecomposable representations and {K}ac-{M}oody {L}ie
  algebras.
\newblock {\em Invent. Math.}, 155(3):537--559, 2004.
\newblock With an appendix by Hiraku Nakajima.

\bibitem{Deglise_Gysin_II}
F.~D\'{e}glise.
\newblock Around the {G}ysin triangle. {II}.
\newblock {\em Doc. Math.}, 13:613--675, 2008.

\bibitem{Deglise_mod_hom}
F.~D\'eglise.
\newblock Modules homotopiques.
\newblock {\em Doc. Math.}, 16:411--455, 2011.

\bibitem{Deglise_Gysin_1}
F.~D\'eglise.
\newblock Around the {G}ysin triangle {I}.
\newblock In {\em Regulators}, volume 571 of {\em Contemp. Math.}, pages
  77--116. Amer. Math. Soc., Providence, RI, 2012.

\bibitem{FSS}
R.~{Fedorov}, A.~{Soibelman}, and Y.~{Soibelman}.
\newblock {Motivic classes of moduli of Higgs bundles and moduli of bundles
  with connections}.
\newblock {\em arXiv e-prints}, page arXiv:1705.04890, May 2017.

\bibitem{GPH}
O.~Garc\'{\i}a-Prada and J.~Heinloth.
\newblock The {$y$}-genus of the moduli space of {${\rm PGL}_n$}-{H}iggs
  bundles on a curve (for degree coprime to {$n$}).
\newblock {\em Duke Math. J.}, 162(14):2731--2749, 2013.

\bibitem{GPHS}
O.~Garc{\'i}a-Prada, J.~Heinloth, and A.~Schmitt.
\newblock On the motives of moduli of chains and {H}iggs bundles.
\newblock {\em J. Eur. Math. Soc. (JEMS)}, 16(12):2617--2668, 2014.

\bibitem{ginzburg}
V.~Ginzburg.
\newblock Lectures on {N}akajima's quiver varieties.
\newblock 2009.

\bibitem{gothen}
P.~B. Gothen.
\newblock The {B}etti numbers of the moduli space of stable rank {$3$} {H}iggs
  bundles on a {R}iemann surface.
\newblock {\em Internat. J. Math.}, 5(6):861--875, 1994.

\bibitem{HLRV}
T.~Hausel, E.~Letellier, and F.~Rodriguez-Villegas.
\newblock Arithmetic harmonic analysis on character and quiver varieties.
\newblock {\em Duke Math. J.}, 160(2):323--400, 2011.

\bibitem{hauselRV}
T.~Hausel and F.~Rodriguez-Villegas.
\newblock Mixed {H}odge polynomials of character varieties.
\newblock {\em Invent. Math.}, 174(3):555--624, 2008.
\newblock With an appendix by Nicholas M. Katz.

\bibitem{Hausel_RV_semiproj}
T.~Hausel and F.~Rodriguez~Villegas.
\newblock Cohomology of large semiprojective hyperk\"ahler varieties.
\newblock {\em Ast\'erisque}, (370):113--156, 2015.

\bibitem{HT}
T.~Hausel and M.~Thaddeus.
\newblock Mirror symmetry, {L}anglands duality, and the {H}itchin system.
\newblock {\em Invent. Math.}, 153(1):197--229, 2003.

\bibitem{heinloth_lectures}
J.~Heinloth.
\newblock Lectures on the moduli stack of vector bundles on a curve.
\newblock In {\em Affine flag manifolds and principal bundles}, Trends Math.,
  pages 123--153. Birkh\"{a}user/Springer Basel AG, Basel, 2010.

\bibitem{Heinloth_LaumonBDay}
J.~Heinloth.
\newblock A conjecture of {H}ausel on the moduli space of {H}iggs bundles on a
  curve.
\newblock {\em Ast\'erisque}, (370):157--175, 2015.

\bibitem{heinloth_int_form}
J.~Heinloth.
\newblock The intersection form on moduli spaces of twisted {$PGL_n$}-{H}iggs
  bundles vanishes.
\newblock {\em Math. Ann.}, 365(3-4):1499--1526, 2016.

\bibitem{Hesselink}
W.~H. Hesselink.
\newblock Desingularizations of varieties of nullforms.
\newblock {\em Invent. Math.}, 55(2):141--163, 1979.

\bibitem{hitchin}
N.~J. Hitchin.
\newblock The self-duality equations on a {R}iemann surface.
\newblock {\em Proc. London Math. Soc. (3)}, 55(1):59--126, 1987.

\bibitem{HPL}
V.~Hoskins and S.~Pepin~Lehalleur.
\newblock On the {V}oevodsky motive of the moduli stack of vector bundles over
  a curve.
\newblock arxiv: 1711.11072, 2017.

\bibitem{HPL_formula}
V.~Hoskins and S.~Pepin~Lehalleur.
\newblock A formula for the voevodsky motive of the moduli stack of vector
  bundles on a curve.
\newblock arxiv: 1809.02150, 2018.

\bibitem{HL}
D.~Huybrechts and M.~Lehn.
\newblock {\em The geometry of moduli spaces of sheaves}.
\newblock Cambridge Mathematical Library. Cambridge University Press,
  Cambridge, second edition, 2010.

\bibitem{Ivorra-adic}
F.~Ivorra.
\newblock R\'{e}alisation {$l$}-adique des motifs triangul\'{e}s
  g\'{e}om\'{e}triques. {I}.
\newblock {\em Doc. Math.}, 12:607--671, 2007.

\bibitem{Karpenko}
N.~A. Karpenko.
\newblock Cohomology of relative cellular spaces and of isotropic flag
  varieties.
\newblock {\em Algebra i Analiz}, 12(1):3--69, 2000.

\bibitem{kelly}
S.~{Kelly}.
\newblock {\em {Voevodsky motives and $l$dh-descent.}}
\newblock Paris: Soci\'et\'e Math\'ematique de France (SMF), 2017.

\bibitem{laumon}
G.~Laumon.
\newblock Correspondance de {L}anglands g\'eom\'etrique pour les corps de
  fonctions.
\newblock {\em Duke Math. J.}, 54(2):309--359, 1987.

\bibitem{markman}
E.~Markman.
\newblock Generators of the cohomology ring of moduli spaces of sheaves on
  symplectic surfaces.
\newblock {\em J. Reine Angew. Math.}, 544:61--82, 2002.

\bibitem{mellit}
A.~{Mellit}.
\newblock {Poincar{\'e} polynomials of moduli spaces of Higgs bundles and
  character varieties (no punctures)}.
\newblock {\em arXiv e-prints}, page arXiv:1707.04214, Jul 2017.

\bibitem{ms}
S.~{Mozgovoy} and O.~{Schiffmann}.
\newblock {Counting Higgs bundles and type A quiver bundles}.
\newblock {\em arXiv e-prints}, page arXiv:1705.04849, May 2017.

\bibitem{RS}
T.~{Richarz} and J.~{Scholbach}.
\newblock {The intersection motive of the moduli stack of shtukas}.
\newblock {\em arXiv e-prints}, page arXiv:1901.04919, Jan 2019.

\bibitem{schiffmann}
O.~Schiffmann.
\newblock Indecomposable vector bundles and stable {H}iggs bundles over smooth
  projective curves.
\newblock {\em Ann. of Math. (2)}, 183(1):297--362, 2016.

\bibitem{schmitt_moduli}
A.~Schmitt.
\newblock Moduli for decorated tuples of sheaves and representation spaces for
  quivers.
\newblock {\em Proc. Indian Acad. Sci. Math. Sci.}, 115(1):15--49, 2005.

\bibitem{simpson_NAHT}
C.~Simpson.
\newblock The {H}odge filtration on nonabelian cohomology.
\newblock In {\em Algebraic geometry---{S}anta {C}ruz 1995}, volume~62 of {\em
  Proc. Sympos. Pure Math.}, pages 217--281. Amer. Math. Soc., Providence, RI,
  1997.

\bibitem{simpson}
C.~Simpson.
\newblock The {H}odge filtration on nonabelian cohomology.
\newblock In {\em Algebraic geometry---{S}anta {C}ruz 1995}, volume~62 of {\em
  Proc. Sympos. Pure Math.}, pages 217--281. Amer. Math. Soc., Providence, RI,
  1997.

\bibitem{simpson_IHES_I}
C.~T. Simpson.
\newblock Moduli of representations of the fundamental group of a smooth
  projective variety.
\newblock {\em Inst. Hautes Etudes Sci. Publ. Math.}, 79:47--129, 1994.

\bibitem{Sumihiro}
H.~Sumihiro.
\newblock Equivariant completion. {II}.
\newblock {\em J. Math. Kyoto Univ.}, 15(3):573--605, 1975.

\bibitem{Suslin-imperfect}
A.~Suslin.
\newblock Motivic complexes over nonperfect fields.
\newblock {\em Ann. K-Theory}, 2(2):277--302, 2017.

\bibitem{Voevodsky-canc}
V.~Voevodsky.
\newblock Cancellation theorem.
\newblock {\em Doc. Math.}, (Extra vol.: Andrei A. Suslin sixtieth
  birthday):671--685, 2010.

\bibitem{VSF}
V.~Voevodsky, A.~Suslin, and E.~M. Friedlander.
\newblock {\em Cycles, transfers, and motivic homology theories}, volume 143 of
  {\em Annals of Mathematics Studies}.
\newblock Princeton University Press, Princeton, NJ, 2000.

\bibitem{Wildeshaus_proj}
J.~Wildeshaus.
\newblock Chow motives without projectivity.
\newblock {\em Compos. Math.}, 145(5):1196--1226, 2009.

\bibitem{wildeshaus_Picard}
J.~Wildeshaus.
\newblock On the interior motive of certain {S}himura varieties: the case of
  {P}icard surfaces.
\newblock {\em Manuscripta Math.}, 148(3-4):351--377, 2015.

\end{thebibliography}

\medskip \medskip

\noindent{Freie Universit\"{a}t Berlin, Arnimallee 3, 14195 Berlin, Germany} 

\medskip \noindent{\texttt{hoskins@math.fu-berlin.de, simon.pepin.lehalleur@gmail.com}}

\end{document}